\documentclass[11pt,reqno]{amsart}
\usepackage[top=3cm, bottom=2.5cm, left=2.5cm, right=2.5cm]{geometry}      
\usepackage[utf8]{inputenc}
\usepackage[T1]{fontenc}

\usepackage[english]{babel}
\usepackage{amsmath,mathtools} 
\usepackage{amsfonts} 
\usepackage{amssymb} 
\usepackage{graphicx} 
\usepackage[all]{xy} 
\usepackage[linesnumbered,ruled,vlined]{algorithm2e} 
\usepackage{hyperref} 
\usepackage{amsthm} 
\usepackage{enumitem}  
\usepackage{emptypage} 
\usepackage{xcolor} 
\usepackage{ifthen} 
\usepackage{pgfplots} 
\usepackage{tikz-3dplot} 
\usetikzlibrary{shapes.geometric} 
\tdplotsetmaincoords{60}{130} 
\usepackage{verbatim} 
\usepackage{todonotes}
\tikzset{
	every picture/.style={
		scale=0.4,
		every node/.style={scale=0.8}
	}
}

\theoremstyle{plain} 
\newtheorem{theorem}{Theorem}[section] 
\newtheorem{proposition}[theorem]{Proposition} 
\newtheorem{lemma}[theorem]{Lemma} 
\newtheorem{corollary}[theorem]{Corollary} 

\theoremstyle{definition} 
\newtheorem{definition}[theorem]{Definition} 
\newtheorem{remark}[theorem]{Remark} 
\newtheorem{example}[theorem]{Example} 
\numberwithin{equation}{section}
\newcommand{\N}{\mathbb{N}} 
\newcommand{\Z}{\mathbb{Z}} 
\newcommand{\Chi}{\mathcal{X}} 
\newcommand{\mat}[2]{\mathcal{M}_{#1}(#2)} 
\newcommand{\mindeg}[1]{\mathrm{mindeg}(#1)} 
\newcommand{\K}{\mathbb{K}} 
\newcommand{\T}{\mathbb{T}} 
\newcommand{\Oid}{\mathcal{O}} 
\newcommand{\dO}[1]{\ifthenelse{\equal{#1}{1}}{\partial \Oid}{\partial^{#1} \Oid}} 
\newcommand{\ind}[2]{\mathrm{ind}_{#1}(#2)} 
\newcommand{\supp}[1]{\mathrm{Supp}(#1)} 
\newcommand{\J}{\mathcal{J}_{\dO{1}}} 
\newcommand{\C}[1]{\mathcal{C}(#1)} 
\newcommand{\h}[1]{\mathrm{Ht}(#1)} 
\newcommand{\TT}[1]{\mathcal{T}#1} 
\newcommand{\nf}[1]{\mathrm{Nf}(#1)} 
\newcommand{\lt}[2]{\mathrm{lt}_{#1}(#2)} 
\newcommand{\inid}[2]{\mathrm{in}_{#1}(#2)} 
\newcommand{\lcm}[1]{\mathrm{lcm}(#1)} 
\newcommand{\reg}[1]{\mathrm{reg}(#1)} 

\begin{document}


\title{Homogeneous Border Bases on Infinite Order Ideals}
\author[C. Bertone]{Cristina Bertone}
\address{Dipartimento di Matematica \lq\lq G.~Peano\rq\rq\\ Universit\`a degli Studi di Torino}
\email{\href{mailto:cristina.bertone@unito.it}{cristina.bertone@unito.it}}
\urladdr{\url{https://sites.google.com/view/cristinabertone/}}
\author[S. Bovero]{Sofia Bovero}
\address{Department of Molecular Biotechnology and Health Sciences, Universit\`a degli Studi di Torino}
\email{\href{mailto:sofia.bovero@edu.unito.it}{sofia.bovero@edu.unito.it}}


\begin{abstract}
	Border bases are traditionally restricted to 0-dimensional ideals due to the finiteness of the underlying order ideal. In this paper we extend the theory to homogeneous ideals of positive Krull dimension by introducing homogeneous border bases, defined relative to an infinite order ideal. Moreover, we provide two characterizations of these bases: one via border reductors and, most notably, one in terms of formal multiplication matrices. 
    Although the latter condition a priori requires verification in infinitely many degrees, we prove that it is sufficient to check only finitely many of them, thereby obtaining an effective criterion.
\end{abstract}

 \keywords{Border basis, homogeneous ideal, positive Krull dimension}
  \subjclass{13P10, 14Q20}
\maketitle

\section{Introduction}

Since their first appearances in \cite{marinari1991grobner, marinari1993groebner, mourrain1999new}, border bases over a finite order ideal have garnered the interest of mathematicians due to their several features. Indeed, border bases were studied from a numerical point of view due to their stability with respect to perturbations of the coefficients \cite{MollerStetter, stetter2004, ABT}. Furthermore, several researchers have investigated border bases from the algebraic point of view, highlighting their appropriate structure for symbolic computation; see, for instance, \cite{KKR2005}, \cite[Section~6.4]{kreuzer2005computational}, \cite{MourrainTreb05}. 
The algebraic and computational features of border bases provide a useful framework for investigating the Hilbert scheme. Thanks to the fact that the family of border bases over a given finite order ideal parameterizes an open subset of a Hilbert scheme of 0-dimensional schemes, many authors have used border bases to  uncover properties of the Hilbert scheme. A non-exhaustive list of works employing border bases in this context includes: \cite{kreuzer2008deformations}, \cite{kr2011}, \cite{Lederer}, \cite{huibregtse2011some}, \cite{HuibregtseElemComp}, \cite{HuibregtseElemComp2023}. Some applications exist also in areas other than algebraic geometry or symbolic computation in commutative algebra; for a recent example, see \cite{RodSatt}.

The use of border bases is unfortunately confined to ideals of Krull dimension 0 in the polynomial ring $\K[x_1,\dots,x_n]$, given that they are defined relative to an order ideal of finite cardinality. Geometrically, the zero-set of such an ideal is a finite set of points in the affine space $\mathbb A^n_{\K}$. 

The goal of the present paper is to give a generalization of the notion of border basis to a homogeneous setting, preserving the algebraic structure that allows explicit computations, starting from an infinite order ideal. These homogeneous border bases generate homogeneous ideals in $\K[x_0,\dots,x_n]$ with positive Krull dimension, whose zero set are algebraic sets in $\mathbb P^n_{\K}$. 
Our setting is close to that of \cite{MourrainTreb} and \cite[Section~6]{bertone2024cohen}.

To our knowledge, \cite{MourrainTreb} is the first paper where the notion of border basis for a non-homogeneous ideal is given by dropping the hypothesis that the order ideal is finite. Also in \cite{chen2007border} the authors consider infinite order ideals, but they use a monomial order so they are dealing with Gr\"obner bases.  The main result of \cite{MourrainTreb} is Algorithm~4.1, that takes as input a (non-homogeneous) ideal in the polynomial ring and outputs an order ideal and a border basis on it generating the given ideal. 
 In the present paper, besides working in a homogeneous setting, we move in the opposite direction. Starting from an infinite order ideal, we  define the notions of homogeneous border prebasis and basis (Definitions~\ref{def: prebasis} and~\ref{def: border basis}), and characterize when a border prebasis is a basis (Theorems~\ref{char_rewrite_relations} and~\ref{char comm mat}).
 In our setting it is very important that the notion of border prebasis comes along with a \emph{border reduction structure}, that needs to be carefully chosen in order to have a polynomial reduction relation which is Noetherian and confluent (see Definition~\ref{def: border reduction relation} and Section~\ref{subsec Noeth and confl})

More recently, in \cite[Section~6]{bertone2024cohen}, working on homogeneous Artinian ideals, the authors used marked bases over a quasi-stable ideal to construct polynomials belonging to the given ideal whose head terms are exactly the border of an infinite order ideal. By these new polynomials, the authors investigate some features of the points of the scheme parametrized by marked bases, i.e. identifying which ideals are complete intersections. The definition of polynomials with head terms in the border starting from a marked basis was expected to be a winning strategy, given that, for 0-dimensional ideals in the affine space, marked bases over a quasi-stable ideal and border bases are closely related  \cite{bertone2022close}.

The construction given in \cite[Section~6]{bertone2024cohen} with homogeneous polynomials inspired us, with the aim of finding a notion of homogeneous border basis for non-Artinian homogeneous ideals.

 The content of the paper is organized in the following way.
 In Section~\ref{sec notations} we set notations and recall some properties of order ideals and their border. In Section~\ref{sec border reduction} we define border reduction structures and border prebases (Definitions~\ref{def: border reduction structure} and~\ref{def: prebasis}), which together allow to have a polynomial reduction relation (Definition~\ref{def: border reduction relation}) which uses \emph{border reductors} (Definition~\ref{def border reductors}). By making a reasonable assumption on the labeling of the terms in the order ideal, we obtain that the reduction is Noetherian and confluent (Section~\ref{subsec Noeth and confl}). In Section~\ref{sec basis}, we define the notion of homogeneous border basis on an order ideal, prove its existence and uniqueness (Definition~\ref{def: border basis} and Propositions~\ref{prop: eu basis} and~\ref{prop: extension groebner}). In Theorem~\ref{char_rewrite_relations} we also provide a first criterion to determine whether a border prebasis is a basis by border reductors. In Section~\ref{sec matrices} we give another criterion for border bases that uses \emph{formal multiplication matrices}, which generalizes the well-known analogous criterion for classical border bases; see for instance \cite[Proposition~16]{kehrein2005characterizations}.
Due to the fact that we are considering a homogeneous border basis on an infinite order ideal, both characterizations of Theorems~\ref{char_rewrite_relations} and~\ref{char comm mat} are not effective, since there are conditions to check for every $d\geq 0$. In Section~\ref{sec finite conditions} we address this flaw using Gotzmann's Theorem, in a  way similar to \cite{MourrainTreb}.
Along the paper, several examples are given; in particular, in the final Example~\ref{ex finale}, we give an explicit set of polynomial conditions on the coefficients of a border prebasis that guarantee it is a basis. We conclude by sketching some future developments and applications.

\section{Notations and Background}\label{sec notations}

Let $\K$ be a field. We consider the polynomial ring $P=\K[x_0,\dots,x_n]$. 
A {\em term} in $P$ is a power product $ x_0^ {\alpha_0}\cdots x_n^{\alpha_n}$, where $(\alpha_0,\ldots,\alpha_n) \in \mathbb N^{n+1}$. We denote the set of terms in $P$ with $\T$; for every $L\subseteq\{x_0,\dots,x_n\}$ we write $\T(L)$ for the set of the terms such that $\alpha_j=0$ for all $x_j\notin L$.\\
We use the standard grading on $P$, i.e.~$\deg(x_j)=1$, for all $j\in \{0,\dots,n\}$. Thus, $\deg( x_0^ {\alpha_0}\cdots x_n^{\alpha_n})=\sum \alpha_i$.

If $S$ is a  subset of $P$, we denote by $\langle S\rangle$ the $\K$-vector space generated by $S$, and by $(S)$ the ideal generated by $S$ in $P$. Furthermore, for every $t\in \mathbb Z$, we denote by $S_{t}$ the subset of $S$ consisting of homogeneous elements of degree $t$. Similarly, we denote by $S_{\geqslant m}$ and $S_{\leqslant m}$ the direct sums $S=\bigoplus_{t \geqslant m }S_{t}$ and $S=\bigoplus_{t \leqslant m }S_{t}$.
Throughout this paper, we consider 
homogeneous ideals $J$ of $P$, so that $J=\bigoplus_{t \in \mathbb N}J_{t}$.
 
For every element $f\in P$, we denote by $\supp{f}$ the set of terms appearing in $f$ with non-zero coefficient.


We now recall the definitions of order ideal, border of an order ideal, and properties of the latter.

\begin{definition}{\cite[Definitions~6.4.3 and~6.4.4]{kreuzer2005computational}} \label{def: order ideal, border, border closure}
	A non-empty set of terms $\Oid \subseteq \T$ is called an \emph{order ideal} if it is closed under forming divisors, i.e. if
	\[
	\forall \tau \in \Oid, \forall \tau' \in \T,\; \tau' \mid \tau \Rightarrow \tau' \in \Oid.
	\]
	The \emph{border} of $\Oid$ is the set of terms
	\[
	\dO{1} \coloneqq \T_1\cdot \Oid \setminus \Oid = (x_0\Oid \cup \dots \cup x_n\Oid) \setminus \Oid
	\]
	and the \emph{first border closure} of $\Oid$ is the set $\overline{\dO{1}} \coloneqq \Oid \cup \dO{1}$. \\
	Since $\overline{\dO{1}}$ is an order ideal too, for every integer $k \geq 0$ we can inductively define the \emph{$k$-th border} of $\Oid$ by the rule
	\[
	\dO{0} \coloneqq \Oid \text{ and } \dO{k} \coloneqq \partial (\overline{\dO{k-1}})
	\]
	and the \emph{$k$-th border closure} of $\Oid$ by
	\[
	\overline{\dO{0}} \coloneqq \dO{0} = \Oid \text{ and } \overline{\dO{k}} = \overline{\dO{k-1}} \cup \dO{k}.
	\]
\end{definition}

The following proposition contains some consequences of Definition~\ref{def: order ideal, border, border closure}.
\begin{proposition}{\cite[Proposition~6.4.6]{kreuzer2005computational}} \label{prop: basic prop borders}
	Let $\Oid \subseteq \T$ be an order ideal.
	\begin{enumerate}[label=(\roman*)]
		\item\label{prop: basic prop borders 1} For every $k \geq 0$, we have a disjoint union $\overline{\dO{k}} = \bigcup_{i=0}^k \dO{i}$. Consequently, we have a disjoint union $\T = \bigcup_{i=0}^\infty \dO{i}$. 
		\item\label{prop: basic prop borders 2} For every $k \geq 1$, we have $\dO{k} = \T_k\cdot \Oid \setminus \T_{<k}\cdot \Oid$. 
		\item\label{prop: basic prop borders 3} A term $\tau \in \T$ is divisible by a term in $\dO{1}$ if and only if $\tau \in \T \setminus \Oid$. 
	\end{enumerate}
\end{proposition}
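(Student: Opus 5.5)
The natural tool is the \emph{index} of a term: for $\tau \in \T$ set
\[
\mathrm{ind}(\tau)=\min\{k\ge 0 : \tau \in \T_k\cdot\Oid\}.
\]
This is well defined because $1\in\Oid$, since $\Oid$ is non-empty and closed under divisors. We will identify $\dO{k}$ with the set of terms of index exactly $k$. Once that identification is in place, all three items follow with little extra work.

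\textbf{Step 1: $\overline{\dO{k}}=\T_{\le k}\cdot\Oid$ for every $k\ge 0$.} We argue by induction on $k$, the case $k=0$ being trivial. For the inductive step,
\[
\overline{\dO{k}}=\overline{\dO{k-1}}\cup\partial\bigl(\overline{\dO{k-1}}\bigr)=\overline{\dO{k-1}}\cup \T_1\cdot\overline{\dO{k-1}},
\]
which by induction equals $\T_{\le k-1}\Oid\cup\T_1\T_{\le k-1}\Oid=\T_{\le k}\Oid$. Along the way we must check that $\overline{\dO{k-1}}$ is an order ideal, so that the border construction applies. This holds because $\T_{\le k-1}\cdot\Oid$ is closed under divisors: if $\tau'\mid \sigma\tau$ with $\sigma\in\T_{\le k-1}$ and $\tau\in\Oid$, write $\tau'=\sigma'\tau''$ with $\sigma'\mid\sigma$ and $\tau''\mid\tau$. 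This exponent-wise splitting is the one mildly fiddly point, and it is settled by choosing, coordinate by coordinate, $\min(\alpha'_j,\beta_j)$ for $\tau''$, where $\beta_j$ are the exponents of $\tau$.

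\textbf{Step 2: item (ii).} By definition, $\dO{k}=\partial(\overline{\dO{k-1}})=\T_1\cdot\overline{\dO{k-1}}\setminus\overline{\dO{k-1}}$. By Step 1 this is $\T_{\le k}\Oid\setminus\T_{\le k-1}\Oid$. Since every term of $\T_{\le k}\Oid$ either lies in $\T_{\le k-1}\Oid$ or has the form $\sigma\tau$ with $\deg\sigma=k$, the difference equals $\T_k\Oid\setminus\T_{<k}\Oid$.

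\textbf{Step 3: item (i).} Step 2 shows that $\dO{i}$ is exactly the set of terms of index $i$. Hence the sets $\dO{i}$ are pairwise disjoint, and their union over $i\le k$ is $\T_{\le k}\Oid=\overline{\dO{k}}$. Every term has a finite index, so $\T$ is the disjoint union of all the $\dO{i}$.

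\textbf{Step 4: item (iii).} If $\tau=\sigma b$ with $b\in\dO{1}$ and $\tau\in\Oid$, then $b\mid\tau$ forces $b\in\Oid$, contradicting $b\notin\Oid$. Conversely, suppose $\tau\notin\Oid$. Take a minimal divisor $b$ of $\tau$ with $b\notin\Oid$, which exists because the set of divisors of $\tau$ is finite. We have $b\neq 1$ since $1\in\Oid$. Pick any $x_j\mid b$; by minimality $b/x_j\in\Oid$, so $b\in x_j\Oid\setminus\Oid\subseteq\dO{1}$.

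I do not expect any real obstacle. The only point requiring care is the order-ideal property of $\T_{\le k}\Oid$ in Step 1.
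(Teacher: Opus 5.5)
Your proof is correct and complete. The paper gives no proof of this proposition and simply imports it from \cite[Proposition~6.4.6]{kreuzer2005computational}. Your argument is the standard one for this result: you establish $\overline{\dO{k}}=\T_{\le k}\cdot\Oid$ by induction, including the check that this set is closed under divisors, read off (ii) and the index partition in (i) from it, and obtain (iii) from a divisibility-minimal divisor of $\tau$ lying outside $\Oid$.
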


The above partition of $\T$ allows us to define a ``distance'' between a term and an order ideal.
\begin{definition}{\cite[Definition~6.4.7]{kreuzer2005computational}} \label{def: index}
	Let $\Oid \subseteq \T$ be an order ideal.
	\begin{enumerate}[label=\alph*)]
		\item For every $\tau \in \T$, the unique number $k \in \N$ such that $\tau \in \dO{k}$ is called the \emph{index} of $\tau$ with respect to $\Oid$ and is denoted by $\ind{\Oid}{\tau}$.
		\item For a polynomial $f \in P \setminus \left\{0\right\}$, we define the \emph{index} of $f$ with respect to $\Oid$ by $\ind{\Oid}{f} = \max \left\{ \ind{\Oid}{\tau} \mid \tau \in \supp{f} \right\}$.
	\end{enumerate}
\end{definition}

Note how the two concepts of index and degree are complementing one another.
\begin{proposition}{\cite[Proposition~6.4.8]{kreuzer2005computational}} \label{prop: basic prop index}
	Let $\Oid \subseteq \T$ be an order ideal.
	\begin{enumerate}[label=(\roman*)]
		\item\label{prop: basic prop index 1} For a term $\tau \in \T$, the number $k = \ind{\Oid}{\tau}$ is the smallest natural number such that $\tau = \tau'\tau''$ with $\tau' \in \T_k$ and $\tau'' \in \Oid$. 
		\item\label{prop: basic prop index 2} Given two terms $\tau,\tau' \in \T$, we have $\ind{\Oid}{\tau\tau'} \leq \deg(\tau) + \ind{\Oid}{\tau'}$. 
		\item\label{prop: basic prop index 3} For non-zero polynomials $f,g \in P$ such that $f+g \neq 0$, we have
		\[
		\ind{\Oid}{f+g} \leq \max \left\{ \ind{\Oid}{f}, \ind{\Oid}{g} \right\}. 
		\] 
		\item\label{prop: basic prop index 4} For non-zero polynomials $f,g \in P$, we have
		\[
		\ind{\Oid}{fg} \leq \min \left\{ \deg(f) + \ind{\Oid}{g}, \deg(g) + \ind{\Oid}{f} \right\}. 
		\] 
	\end{enumerate}
\end{proposition}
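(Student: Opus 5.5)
The plan is to prove the four items in order, each building on the previous ones. The main tool throughout is the characterization of the borders in Proposition~\ref{prop: basic prop borders}\ref{prop: basic prop borders 2}, namely $\dO{k} = \T_k\cdot\Oid\setminus\T_{<k}\cdot\Oid$ for $k\geq 1$, together with the disjoint partition $\T=\bigcup_{i\ge 0}\dO{i}$ of part~\ref{prop: basic prop borders 1}.

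For \ref{prop: basic prop index 1}, fix $\tau\in\T$ and set $k=\ind{\Oid}{\tau}$. If $k=0$ then $\tau\in\Oid$, and $\tau=1\cdot\tau$ is the required factorization; it is clearly minimal. If $k\geq 1$, then $\tau\in\dO{k}$, so by part~\ref{prop: basic prop borders 2} we have $\tau\in\T_k\cdot\Oid$. This gives a factorization $\tau=\tau'\tau''$ with $\deg\tau'=k$ and $\tau''\in\Oid$. The same part also gives $\tau\notin\T_{<k}\cdot\Oid$, so no such factorization exists with $\deg\tau'<k$. Hence $k$ is the smallest such number.

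For \ref{prop: basic prop index 2}, use \ref{prop: basic prop index 1} to write $\tau'=\sigma\sigma''$ with $\deg\sigma=\ind{\Oid}{\tau'}$ and $\sigma''\in\Oid$. Then $\tau\tau'=(\tau\sigma)\sigma''$, where $\deg(\tau\sigma)=\deg\tau+\ind{\Oid}{\tau'}$. By the minimality in \ref{prop: basic prop index 1}, we get $\ind{\Oid}{\tau\tau'}\leq\deg\tau+\ind{\Oid}{\tau'}$.

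For \ref{prop: basic prop index 3}, note that $\supp{f+g}\subseteq\supp{f}\cup\supp{g}$. Taking the maximum of the index over this set gives the bound directly.

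For \ref{prop: basic prop index 4}, assume $fg\neq 0$ (automatic over a field). Every term of $\supp{fg}$ has the form $\tau\tau'$ with $\tau\in\supp{f}$ and $\tau'\in\supp{g}$.
\begin{itemize}
\item By \ref{prop: basic prop index 2}, $\ind{\Oid}{\tau\tau'}\leq\deg\tau+\ind{\Oid}{\tau'}\leq\deg(f)+\ind{\Oid}{g}$. Here $\deg(f)$ is the maximal degree of a term in $\supp{f}$, which covers the non-homogeneous case.
\item The symmetric bound $\deg(g)+\ind{\Oid}{f}$ follows in the same way.
\end{itemize}
Taking the maximum over $\supp{fg}$ yields the minimum of the two bounds.

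The only step with real content is \ref{prop: basic prop index 1}, since it depends on part~\ref{prop: basic prop borders 2} of Proposition~\ref{prop: basic prop borders}. That result is assumed here, so no serious obstacle is expected; the remaining items are bookkeeping.
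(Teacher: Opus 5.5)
Your proof is correct and is essentially the standard Kreuzer--Robbiano argument, which the paper cites without reproducing. Item (i) follows directly from $\dO{k}=\T_k\cdot\Oid\setminus\T_{<k}\cdot\Oid$, item (ii) from factoring $\tau'$ via (i) and invoking minimality, and items (iii)--(iv) from bookkeeping on supports combined with (ii).
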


Although the partial ordering on $\T$ defined by the index appears similar to a term ordering, it has a serious drawback: this ordering is incompatible with multiplication, i.e. $\ind{\Oid}{\tau} \leq \ind{\Oid}{\tau'}$ does not, in general, imply $\ind{\Oid}{\tau\tau''} \leq \ind{\Oid}{\tau'\tau''}$.
\begin{example}\label{ex:drawback index}
	Consider the order ideal $\Oid = \T(x) \cup \left\{ y, xy, x^2y, y^2, xy^2, x^2y^2 \right\} \subseteq \K[x,y]$ whose border is $\dO{1} = \left\{ x^ky \mid k \geq 3 \right\} \cup \left\{ y^3, xy^3, x^2y^3, x^3y^2 \right\}$. The following diagram illustrates the situation. The point $(r,s)$ of the diagram corresponds to the term $x^ry^s$. We use the symbol {\tikz \node[fill,circle,inner sep=2pt] {};} for terms in $\Oid$, {\tikz \node[draw,circle,inner sep=2pt] {};} for terms in $\dO{1}$, and {\tikz {\draw (-0.15,-0.15) -- (0.15,0.15);\draw (-0.15,0.15) -- (0.15,-0.15);}} for terms in $\dO{2}$.
	\begin{center}
		\begin{tikzpicture}
			\draw[->] (0,0) -- (7.5,0) node[right] {\textit{x}};
			\draw[->] (0,0) -- (0,5) node[above] {\textit{y}};
			\node[below left] at (0,0) {\text{1}};
			\foreach \x/\y in {0/0, 0/1, 1/1, 2/1, 0/2, 1/2, 2/2, 1/0, 2/0, 3/0, 4/0, 5/0, 6/0, 7/0} {
				\node[fill,circle,inner sep=2pt] at (\x,\y) {};
			}
			\foreach \x/\y in {0/3, 1/3, 2/3, 3/2, 3/1, 4/1, 5/1, 6/1, 7/1} {
				\node[draw,circle,inner sep=2pt] at (\x,\y) {};
			}
			\foreach \x/\y in {0/4, 1/4, 2/4, 3/3, 4/2, 5/2, 6/2, 7/2} {
				\draw (\x-0.15,\y-0.15) -- (\x+0.15,\y+0.15);
				\draw (\x-0.15,\y+0.15) -- (\x+0.15,\y-0.15);
			}
		\end{tikzpicture}
	\end{center}
	Multiplying the terms on both sides of $\ind{\Oid}{x^2y^2} < \ind{\Oid}{y^3}$ by $x^2$, we get $\ind{\Oid}{x^2 \cdot x^2y^2} > \ind{\Oid}{x^2 \cdot y^3}$. Similarly, multiplying the terms on both sides of $\ind{\Oid}{y^3} = \ind{\Oid}{x^2y^3}$ by $x$, we get $\ind{\Oid}{x \cdot y^3} < \ind{\Oid}{x \cdot x^2y^3}$.
\end{example}

\section{Polynomial Border Reduction}\label{sec border reduction}
\begin{definition}\label{def: border reduction structure}
	Given an order ideal $\Oid \subseteq \T$, let the terms of the border $\dO{1}$ 
    be ordered in an arbitrary way and labeled coherently, i.e. for every 
    $\sigma_i, \sigma_j\in \dO{1}$, if $i<j$ then 
    $\sigma_i$ precedes 
    $\sigma_j$. \\ 
	The \emph{border reduction structure} $\J$ is the 3-uple 
	\[
    \J \coloneqq \left(\dO{1} , \, \mathcal{L} \coloneqq \left\{\mathcal{L}_\sigma \mid \sigma \in \dO{1}\right\}, \, \mathcal{T} \coloneqq \left\{\mathcal{T}_\sigma \mid \sigma \in \dO{1}\right\} \right)
	\] 
	where:
	\begin{itemize}
		\item for every 
        $\sigma \in \dO{1}$, $\mathcal{L}_{\sigma} \coloneqq \Oid_{\deg(\sigma)}$ is the \emph{tail set} of $\sigma$;
		\item for every $\sigma_i \in \dO{1}$, $\mathcal{T}_{\sigma_i} \coloneqq \left\{ \eta \in \T : \forall j>i, \, \sigma_j \nmid \eta\sigma_i \right\}$ is the \emph{multiplicative set} of $\sigma_i$. 
	\end{itemize}
	The elements of $\mathcal{T}_{\sigma_i}$ are referred to as the \emph{multiplicative terms} for $\sigma_i$.

    Given a term $\sigma \in \dO{1}$, we define the \emph{cone} of $\sigma$ with respect to the border reduction structure $\J$ as the set  $\mathcal{C}_{\J}(\sigma) \coloneqq \left\{\eta\sigma \mid \eta \in \mathcal{T}_\sigma \right\}$.
\end{definition}

\begin{remark}
We observe 
that two reduction structures on the same set $\dO{1}$ differ if and only if there is at least one term in $\dO{1}$ having different sets of multiplicative terms in the two reduction structures (and hence different cones in the two reduction structures). The latter happens if and only if the reduction structures are given by two different orders of the terms in $\dO{1}$, i.e. two different labellings for them. This fact is highlighted in the following example.
\end{remark}

\begin{example}\label{es: cones}
	Consider the order ideal $\Oid = \T(x) \cup y\cdot \T(x) \cup \left\{ y^2, xy^2, x^2y^2, y^3 \right\} \subseteq \K[x,y]$, whose border is $\dO{1} = \left\{ y^4, xy^3, x^2y^3 \right\} \cup \left\{ x^ky^2 \mid k \geq 3 \right\}$. 
	We consider two different border reduction structures, with the terms of the border ordered increasingly by degree but in different ways. \\
	For the first border reduction structure $\J = \left(\dO{1},\, \mathcal{L},\, \mathcal{T}\right)$, we order the border in the following way:
    \[
	\sigma_1=y^4, \; \sigma_2=xy^3, \; \sigma_3=x^2y^3, \; \sigma_4=x^3y^2, \; \sigma_\ell=x^{\ell-1}y^2 \quad \ell\geq 5.
	\]
	We obtain that the multiplicative sets are $\mathcal{T}_\sigma = \T(y)$ for every $\sigma \in \dO{1}$. In this case the cones are:
    \begin{gather*}
        \mathcal{C}_{\J}(y^4)= \{y^k \mid k \geq 4 \}, \quad
        \mathcal{C}_{\J}(xy^3)= \{xy^k \mid k \geq 3 \}, \quad
        \mathcal{C}_{\J}(x^2y^3)= \{x^2y^k \mid k \geq 3 \}, \\
        \mathcal{C}_{\J}(x^\ell y^2)= \{x^\ell y^k \mid k \geq 2 \} \; \text{ for } \ell \geq 3.
    \end{gather*}
    The following diagram illustrates the situation. The blue arrows represent the cone of the term from which they originate.
	\begin{center}
		\begin{tikzpicture}
			\draw[->] (0,0) -- (10.5,0) node[right] {\textit{x}};
			\draw[->] (0,0) -- (0,5) node[above] {\textit{y}};
			\node[below left] at (0,0) {\text{1}};
			\foreach \x/\y in {0/0, 1/0, 2/0, 3/0, 4/0, 5/0, 6/0, 7/0, 8/0, 9/0, 10/0, 0/1, 1/1, 2/1, 3/1, 4/1, 5/1, 6/1, 7/1, 8/1, 9/1, 10/1, 0/2, 1/2, 2/2, 0/3} {
				\node[fill,circle,inner sep=2pt] at (\x,\y) {};
			}
			\foreach \x/\y in {0/4, 1/3, 2/3, 3/2, 4/2, 5/2, 6/2, 7/2, 8/2, 9/2, 10/2} {
				\node[draw,circle,blue,thick,inner sep=2pt] at (\x,\y) {};
			}
			\foreach \x/\y/\dx/\dy in {0/4/0/0.7,
				1/3/0/0.7,
				2/3/0/0.7, 
				3/2/0/0.7, 
				4/2/0/0.7, 5/2/0/0.7, 6/2/0/0.7, 7/2/0/0.7, 8/2/0/0.7, 9/2/0/0.7, 10/2/0/0.7
			} {
				\draw[blue, thick, ->] (\x,\y) -- (\x+\dx,\y+\dy);
			}
		\end{tikzpicture}
	\end{center}
	We now consider the border reduction structure $\J' = \left(\dO{1},\, \mathcal{L}',\, \mathcal{T}'\right)$, with the terms of the border ordered in the following way:
    \[
	\sigma_1=y^4, \; \sigma_2=xy^3, \; \sigma_3=x^3y^2, \; \sigma_4=x^2y^3, \; \sigma_\ell=x^{\ell-1}y^2 \quad \ell\geq 5,
	\]
	and we obtain that the multiplicative sets are 
	\[
	\mathcal{T}'_{x^3y^2} = \{1\}, \quad
	\mathcal{T}'_{x^2y^3} = \T(y) \cup x\cdot\T(y), \quad
	\mathcal{T}'_\sigma = \T(y) \; \text{ for every } \sigma \neq x^3y^2, x^2y^3.
	\]
    In this second case the cones are:
    \begin{gather*}
        \mathcal{C}_{\J'}(y^4)= \{y^k \mid k \geq 4 \}, \quad
        \mathcal{C}_{\J'}(xy^3)= \{xy^k \mid k \geq 3 \}, \quad
        \mathcal{C}_{\J'}(x^3y^2)= \{x^3y^2\}, \\
        \mathcal{C}_{\J'}(x^2y^3)= \{x^2y^k \mid k \geq 3 \} \cup \{x^3y^k \mid k \geq 3 \}, \quad
        \mathcal{C}_{\J'}(x^\ell y^2)= \{x^\ell y^k \mid k \geq 2 \} \; \text{ for } \ell \geq 4.
    \end{gather*}
	The following diagram illustrates the situation.
	\begin{center}
		\begin{tikzpicture}
			\draw[->] (0,0) -- (10.5,0) node[right] {\textit{x}};
			\draw[->] (0,0) -- (0,5) node[above] {\textit{y}};
			\node[below left] at (0,0) {\text{1}};
			\foreach \x/\y in {0/0, 1/0, 2/0, 3/0, 4/0, 5/0, 6/0, 7/0, 8/0, 9/0, 10/0, 0/1, 1/1, 2/1, 3/1, 4/1, 5/1, 6/1, 7/1, 8/1, 9/1, 10/1, 0/2, 1/2, 2/2, 0/3} {
				\node[fill,circle,inner sep=2pt] at (\x,\y) {};
			}
			\foreach \x/\y in {0/4, 1/3, 2/3, 3/2, 4/2, 5/2, 6/2, 7/2, 8/2, 9/2, 10/2} {
				\node[draw,circle,blue,thick,inner sep=2pt] at (\x,\y) {};
			}
			\foreach \x/\y/\dx/\dy in {0/4/0/0.7,
				1/3/0/0.7,
				2/3/0/0.7, 3/3/0/0.7,
				4/2/0/0.7, 5/2/0/0.7, 6/2/0/0.7, 7/2/0/0.7, 8/2/0/0.7, 9/2/0/0.7, 10/2/0/0.7
			} {
				\draw[blue, thick, ->] (\x,\y) -- (\x+\dx,\y+\dy);
			}
			\foreach \x/\y/\dx/\dy in {2/3/1/0} {
				\draw[blue, thick, -] (\x,\y) -- (\x+\dx,\y+\dy);
			}
		\end{tikzpicture}
	\end{center}
\end{example}

When the border reduction structure is clear from the context, we simply write $\C{\sigma}$ instead of $\mathcal{C}_{\J}(\sigma)$.

\begin{remark}
	Note that a border reduction structure $\J$ is a reduction structure as defined in \cite[Definition~3.1]{ceria2019general} if we relax the hypothesis on the cardinality of the set $M$, 
    namely we allow this set to be infinite. Indeed $\bigcup_{
    \sigma \in \dO{1}} \C{\sigma}  \allowbreak = (\dO{1}) 
    $ (as shown in \cite[Lemma~13.2.ii]{ceria2019general}, whose proof remains valid also for an infinite order ideal). Furthermore, for all 
    $\sigma\in \dO{1}$, we have that $\mathcal{T}_\sigma$ is an order ideal (as shown in \cite[Lemma~13.2.i]{ceria2019general}, whose proof remains valid for infinite order ideals too). Finally, $\mathcal{L}_\sigma$ is a finite subset of $\T \setminus \C{\sigma}$. \\
	Moreover, here $\J$ differs from the classical border reduction structure for a finite order ideal, as defined in \cite[Definition~11]{bertone2022close}, not only in having an infinite border, but also in the definition of $\mathcal{L}_
    \sigma$, which consists solely of terms of degree 
    $\deg(\sigma)$.
\end{remark}

The following lemma is proved as part  of \cite[Proposition~2]{bertone2022close}, which remains valid for infinite order ideals and for any ordering of the border.
\begin{lemma}\label{lem: disjoint cones}
	Let $\Oid\subseteq \T$ be an order ideal, and label the terms in $\dO{1}$ as in Definition~\ref{def: border reduction structure}. A border reduction structure $\J$ has disjoint cones, i.e.
    $\C{\sigma_i} \cap \C{\sigma_j} = \emptyset$
	for every pair of distinct elements 
    $\sigma_i,\sigma_j\in \dO{1}$.
\end{lemma}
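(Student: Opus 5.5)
The plan is a direct argument by contradiction that uses only the definition of the multiplicative sets $\mathcal{T}_{\sigma_i}$ and the ordering/labeling of the border. Suppose $i<j$ and that some term $\tau$ lies in both $\C{\sigma_i}$ and $\C{\sigma_j}$. Then we can write $\tau=\eta_i\sigma_i=\eta_j\sigma_j$ with $\eta_i\in\mathcal{T}_{\sigma_i}$ and $\eta_j\in\mathcal{T}_{\sigma_j}$. The goal is to reach a contradiction with one of these two memberships.

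The key observation is immediate. Since $\tau=\eta_j\sigma_j$, the term $\sigma_j$ divides $\tau=\eta_i\sigma_i$. Since $j>i$, this is exactly what the definition of $\mathcal{T}_{\sigma_i}$ forbids: $\eta_i\in\mathcal{T}_{\sigma_i}$ requires that no $\sigma_k$ with $k>i$ divides $\eta_i\sigma_i$. Hence $\eta_i\notin\mathcal{T}_{\sigma_i}$, which is a contradiction, and the cones are disjoint.

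Two points need care. First, the labeling need not be compatible with degree or with any well-order beyond the index set. Definition~\ref{def: border reduction structure} only says the border is ordered and labeled so that $i<j$ means $\sigma_i$ precedes $\sigma_j$, so the indices form a totally ordered set, and that total order is all the argument uses. Second, $\dO{1}$ may be infinite, but the argument only ever compares the two fixed elements $\sigma_i,\sigma_j$, so no finiteness is needed. This is why the proof of \cite[Proposition~2]{bertone2022close} carries over verbatim.

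I do not expect a real obstacle; the step deserving attention is checking that the definition is applied with the correct direction of the inequality. The constraint is imposed on the cone of the \emph{earlier} element $\sigma_i$, relative to all later ones, so in a pair with $i<j$ it is the membership $\eta_i\in\mathcal{T}_{\sigma_i}$ that fails.
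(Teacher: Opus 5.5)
Your proof is correct and is the same argument the paper relies on, which it imports from \cite[Proposition~2]{bertone2022close}. For $i<j$, a common element $\eta_i\sigma_i=\eta_j\sigma_j$ gives $\sigma_j\mid\eta_i\sigma_i$, contradicting $\eta_i\in\mathcal{T}_{\sigma_i}$; as you note, this uses neither finiteness of $\dO{1}$ nor any compatibility of the ordering with degree, which is why the paper says the argument remains valid for infinite order ideals and any ordering of the border.
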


\begin{definition}\label{def: prebasis}
	Let $\Oid$ be an order ideal. A set of polynomials $G = \left\{ 
    g_\sigma\right\}_{\sigma \in \dO{1}}$ is called a \emph{homogeneous $\dO{1}$-prebasis}, or \emph{homogeneous border prebasis on $\Oid$}, if the polynomials have the form
	\[
    g_\sigma = \sigma - \sum_{\tau \in \Oid_{\deg(\sigma)}} c_{\sigma \tau} \tau
	\]
	with 
    $c_{\sigma \tau}\in \K$, for every 
    $\sigma\in \dO{1}$.
\end{definition}

We can  associate to a $\dO{1}$-prebasis $G$, once a border reduction structure is fixed, a reduction procedure $\to^+_G$.
\begin{definition}\label{def: border reduction relation}
	Let $\Oid$ be an order ideal, let the terms in $\dO{1}$ be ordered as in Definition~\ref{def: border reduction structure}, and let $\J$ be a border reduction structure. Consider  a homogeneous $\dO{1}$-prebasis $G$. The \emph{border reduction relation} associated to $G$ and $\J$ is the transitive closure $\to^+_{G,\J}$ of the relation $\to_{G,\J}$ on $P$ that is defined in the following way:\\
    for $f,h \in P$,  $f$ is in relation with $h$ (and we write $f \to_{G,\J} h$) when there exist terms 
    $\beta\in \supp{f}$ and 
    $\sigma \in \dO{1}$ such that 
    $\beta = \eta \sigma \in \C{\sigma}$
    and
	\[
    h=f-c\eta g_\sigma,
	\]
	where $c \in \K$ is the coefficient of 
    $\beta$ in $f$. 
	When $f \to^+_{G,\J} h$, we say that $f$ \emph{reduces to} $h$ with respect to $G$.
\end{definition}
When there is no ambiguity, we write $\to_{G}$ and $\to^+_{G}$ instead of $\to_{G,\J}$ and $\to^+_{G,\J}$.

\begin{remark}
A homogeneous $\dO{1}$-prebasis is made of marked polynomials in the sense of \cite{ReevesS}, and 
is also a marked set on a border reduction structure $\J$ as defined in \cite[Definition~4.2]{ceria2019general}. Indeed, for every 
    $\sigma\in \dO{1}$ there is a unique $g_\sigma \in G$ with a unique term of the support belonging to $(\dO{1})$, which is $\sigma$. We define 
    $\h{g_\sigma}=\sigma$ and we call it \emph{head term} (or \emph{marked term}) of $g_\sigma$. Observe that 
    $\supp{g_\sigma - \sigma} \subseteq \Oid_{\deg(\sigma)}=\mathcal{L}_\sigma$. The polynomial $g_\sigma-\sigma$ is the \emph{tail} of $g_\sigma$.

    However, the reduction relation of Definition~\ref{def: border reduction relation} is not the one of \cite{ReevesS}. Indeed, we define the present reduction relation by using a border reduction structure $\J$, as in  \cite[page~105]{ceria2019general}. 
\end{remark}

A term 
$\beta\in (\dO{1})$ is reduced using the term 
$\sigma_i \in \dO{1}$ with $i = \max\left\{ \ell \mid 
\sigma_\ell \in \dO{1} \allowbreak \text{ divides } \beta \right\}$, that is 
$\sigma_i$ such that 
$\beta\in \C{\sigma_i}$.
Such 
$\sigma_i$ is unique by Lemma~\ref{lem: disjoint cones}.
\begin{definition}{\cite[Definition~4.2]{ceria2019general}} \label{def border reductors}Let $\Oid$ be an order ideal, $\J$ a border reduction structure  and $G$ be a $\dO{1}$-prebasis.
	We define the set of all \emph{border reductors} with respect to the border reduction procedure $\to^+_{G,\J}$ as $\mathcal{T}_{\J}G \coloneqq \left\{ 
    \eta g_\sigma \mid g_\sigma \in G, \, \eta \in \mathcal{T}_\sigma\right\}$. 
	For each $d \in \mathbb{N}$, we also define its degree-$d$ part: ${\mathcal{T}_{\J}G}_d \coloneqq \mathcal{T}_{\J}G \cap P_d = \left\{ 
    \eta g_\sigma \in \mathcal{T}_{\J}G \,\colon \allowbreak \deg(\eta)+\deg(\sigma)=d \right\}$.
\end{definition}

For the same order ideal and border prebasis, different border reduction structures
yeld to different border reductors.
\begin{example}\label{es: border reductors}
	Consider the order ideal $\Oid$ and the two border reduction structures $\J$, $\J'$ in Example~\ref{es: cones}. Let $G$ be the homogeneous $\dO{1}$-prebasis given by the polynomials
	\begin{gather}\label{eq:es border reductors}
		g_{y^4} = y^4, \quad
		g_{xy^3} = xy^3, \quad
		g_{x^2y^3} = x^2y^3 + x^5, \quad 
		g_{x^3y^2} = x^3y^2 + x^5, \quad
		g_{x^ky^2} = x^ky^2 \; \text{ for } k \geq 4.
	\end{gather}
    Then $\mathcal{T}_{\J}G \neq \mathcal{T}_{\J'}G$. Indeed:
    \[
    {\mathcal{T}_{\J}G}_6 = \left\{ y^2g_{y^4}, \; y^2g_{xy^3}, \; yg_{x^2y^3}, \; yg_{x^3y^2}, \; g_{x^4y^2}  \right\}
    \]
	and
    \[
    {\mathcal{T}_{\J'}G}_6 = \left\{ y^2g_{y^4}, \; y^2g_{xy^3}, \; yg_{x^2y^3}, \; xg_{x^2y^3}, \; g_{x^4y^2} \right\}.
    \]
\end{example}
When no ambiguity arises, we write $\TT{G}$ and $\TT{G}_d$ for $\mathcal{T}_{\J}G$ and ${\mathcal{T}_{\J}G}_d$.

We observe that $\langle \TT{G} \rangle \subseteq (G)$ and $\langle \TT{G}_d \rangle \subseteq (G)_d$.

Let us now state some useful properties of the border reduction procedure.
\begin{lemma}\label{lem: prop reduction}  Let $\Oid$ be an  order ideal, $\J$ a border reduction structure and $G$ be a homogeneous $\dO{1}$-prebasis.
	Consider a polynomial $f \in P$ of degree $d$, and let $h \in P$ such that $f \to^+_G h$.\\
		The polynomial $f-h$ belongs to $\langle \TT{G} \rangle$,  more precisely $f=\sum_{j=1}^s c_{j}\eta_jg_{\sigma_{i_j}}+h$ with 
        \begin{equation}\label{eq:TGwriting}
        f\to_G f-c_{1}\eta_1g_{\sigma_{i_1}}\to_G f-c_{1}\eta_1g_{\sigma_{i_1}}-c_{2}\eta_2g_{\sigma_{i_2}}\to_G\cdots\to_G f-(\sum_{j=1}^s c_{j}\eta_jg_{\sigma_{i_j}}),
        \end{equation}
        where $\eta_jg_{\sigma_{i_j}}\in \TT{G}$, $c_j$ suitable elements of $\K$ and $h=f-(\sum_{j=1}^s c_{j}\eta_jg_{\sigma_{i_j}})$.
		Furthermore,  $\deg(h) \leq \deg(f)$.
        
		Moreover, if $f$ is homogeneous then $f-h \in \langle \TT{G}_d \rangle$, the polynomials $\eta_jg_{\sigma_{i_j}}$ in \eqref{eq:TGwriting}  all belong to $\TT{G}_d$ and $h$ is homogeneous of degree $d$.
	\end{lemma}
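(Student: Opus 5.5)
The plan is an induction on the number $s$ of single steps $\to_G$ in the chain realizing $f \to^+_G h$. By definition of transitive closure, $f \to^+_G h$ means there is a finite chain $f = f_0 \to_G f_1 \to_G \cdots \to_G f_s = h$ with $s \geq 1$. Each single step $f_{j-1} \to_G f_j$ is, by Definition~\ref{def: border reduction relation}, of the form $f_j = f_{j-1} - c_j \eta_j g_{\sigma_{i_j}}$. Here $\beta_j = \eta_j\sigma_{i_j} \in \C{\sigma_{i_j}}$, so $\eta_j \in \mathcal{T}_{\sigma_{i_j}}$, and $c_j$ is the coefficient of $\beta_j$ in $f_{j-1}$. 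Hence $\eta_j g_{\sigma_{i_j}} \in \TT{G}$ by Definition~\ref{def border reductors}. Telescoping gives $h = f - \sum_{j=1}^s c_j\eta_j g_{\sigma_{i_j}}$, which is exactly the chain \eqref{eq:TGwriting}, and therefore $f - h \in \langle \TT{G}\rangle$.

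For the degree bound, I will show that a single step does not increase degree and then induct. The prebasis polynomial $g_\sigma$ is homogeneous of degree $\deg(\sigma)$ by Definition~\ref{def: prebasis}, so $\eta_j g_{\sigma_{i_j}}$ is homogeneous of degree $\deg(\beta_j)$. Since $\beta_j \in \supp{f_{j-1}}$, we have $\deg(\beta_j) \leq \deg(f_{j-1})$. Thus $\deg(f_j) \leq \max\{\deg(f_{j-1}), \deg(\beta_j)\} = \deg(f_{j-1})$, and inductively $\deg(h) \leq \deg(f)$. (If some $f_j = 0$, no further step can occur, and the claim is trivial with the convention $\deg 0 = -\infty$.)

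For the homogeneous case, the induction hypothesis is that $f_{j-1}$ is homogeneous of degree $d$. Then $\beta_j \in \supp{f_{j-1}}$ has degree $d$, so $\eta_j g_{\sigma_{i_j}}$ is homogeneous of degree $\deg(\eta_j) + \deg(\sigma_{i_j}) = d$. It therefore lies in $\TT{G}_d$, and $f_j$ is again homogeneous of degree $d$ (or zero). It follows that $f - h \in \langle \TT{G}_d \rangle$ and $h$ is homogeneous of degree $d$.

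There is no real obstacle here. The only point needing care is the observation that every $g_\sigma$ is homogeneous, because its tail is supported in $\Oid_{\deg(\sigma)}$. This is exactly where the choice $\mathcal{L}_\sigma = \Oid_{\deg(\sigma)}$ in Definition~\ref{def: border reduction structure} is used.
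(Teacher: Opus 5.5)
Your proposal is correct and follows the same approach as the paper: you unwind $f \to^+_G h$ into single steps $f_j = f_{j-1} - c_j\eta_j g_{\sigma_{i_j}}$, telescope, and use that each $g_\sigma$ is homogeneous of degree $\deg(\sigma)$. Your induction on the chain is slightly more careful than the paper's proof, which bounds the reductors' degrees only via $\supp{f}$, whereas the term reduced at step $j$ lies in $\supp{f_{j-1}}$; the invariant you carry along, $\deg(f_{j-1}) \leq d$ (resp.\ homogeneity of degree $d$), is exactly what closes that point.
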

\begin{proof}
	The fact that $f-h\in \langle \TT{G}\rangle$ and the existence of the writing \eqref{eq:TGwriting} follow directly from the definition of the border reduction procedure.\\
	Since every term 
    $\beta\in \supp{f}$ satisfies 
    $\deg(\beta)\leq \deg(f)=d$, the border reduction procedure on $f$ involves only elements 
    $\eta g_\sigma \in \TT{G}$ whose degree is at most $d$. Consequently, since $G$ consists of homogeneous polynomials, the support of $h$ cannot contain any term of degree greater than $d$.

    Assume now that $f$ is homogeneous. Then the polynomials $\eta g_\sigma$ appearing in \eqref{eq:TGwriting} are of degree~$d$, hence they belong to $\TT{G}_d$. As a consequence, the support of $h$ must contain only terms of degree~$d$.			
\end{proof}

\begin{definition}{\cite[Definition~6.1]{ceria2019general}} \label{def: TTGrep} Let $\Oid$ be an  order ideal, $\J$ a border reduction structure and $G$ be a homogeneous $\dO{1}$-prebasis. Let $f = \sum_{j=1}^{r} c_j{\eta_j}g_{\sigma_{i_j}}$, where $c_j \in \K$ and each ${\eta_j}g_{\sigma_{i_j}}$ is a distinct element of $\TT{G}$. More precisely, we require that if $g_{\sigma_{i_j}} = g_{\sigma_{i_k}}$ then ${\eta_j} \neq {\eta_k}$. In this case, we say that the writing $\sum_{j=1}^{r} c_j{\eta_j}g_{\sigma_{i_j}}$ is a \emph{representation of f by $\TT{G}$}. 
	If, moreover, there exists $n \in \mathbb{N}$ such that for every $j=1,\dots,r$ it holds $\ind{\Oid}{{\eta_j}\sigma_{i_j}} \leq n$, then we say that the representation of $f$ is an \emph{$n$-lower representation} (or \emph{$n$-LRep} for short).\\ 
	Finally, if all the elements ${\eta_j} g_{\sigma_{i_j}}$ appearing in the representation belong to $\TT{G}_d$ for some fixed degree~$d$, we say that this is a \emph{representation by $\TT{G}_d$}.
\end{definition}

\begin{definition}{\cite[page~185]{bertone2022close}}
	Let $\Oid$ be an order ideal and let $G$ be a homogeneous $\dO{1}$-prebasis, and $f \in P$ a polynomial. A \emph{$(\dO{1})$-reduced form modulo $(G)$ of $f$} is a polynomial $h \in P$ such that $f-h \in (G)$ and $\supp{h} \subseteq \Oid$. 
	If there exists a unique $(\dO{1})$-reduced form modulo $(G)$ of $f$, then it is called \emph{$(\dO{1})$-normal form modulo $(G)$ of $f$} and is denoted by $\nf{f}$.
\end{definition}

We remark that if $f \in \langle\Oid\rangle$, then a $(\dO{1})$-reduced form modulo $(G)$ of $f$ is $f$ itself. In this case we say that $f$ is \emph{reduced with respect to $(\dO{1})$} or is a \emph{$(\dO{1})$-remainder}.


\subsection{Noetherianity and Confluency}\label{subsec Noeth and confl}

Let $\Oid$ be an  order ideal, $\J$ a border reduction structure and $G$ be a homogeneous $\dO{1}$-prebasis. The reduction relation $\to^+_G$ is called \emph{Noetherian} if there is no infinite reduction chain $f_1 \to_G f_2 \to_G f_3 \to_G \cdots$. The relation $\to^+_G$ is called \emph{confluent} if for every polynomial $f \in P$ there exists only one $(\dO{1})$-reduced form $h$ modulo $(G)$ such that $f \to^+_G h$.

Disjoint cones alone are not sufficient to ensure Noetherianity, as the following example shows.
\begin{example}\label{es: non noeth.}
	Consider the order ideal $\Oid = \T(x) \cup \left\{ y,xy,x^2y,y^2,y^3 \right\} \subseteq \K[x,y]$, whose border is $\dO{1} = \left\{x^ky \mid k \geq 3 \right\} \cup \left\{ xy^2,x^2y^2,xy^3,y^4 \right\}$. We consider the border reduction structure $\J$ with the terms of the border ordered in the following way:
	\[
	x^4y, \; x^5y, \; x^6y, \; \dots, \; x^3y, \; x^2y^2, \; xy^3, \; y^4, \; xy^2
	\]
	and the homogeneous $\dO{1}$-prebasis $G$ given by the polynomials
	\begin{gather*}
		g_{x^ky}=x^ky \; \text{ for } k \geq 3, \quad
		g_{x^2y^2}=x^2y^2, \quad 
		g_{xy^3}=xy^3, \quad 
		g_{y^4}=y^4, \quad
		g_{xy^2}= xy^2 - x^2y - y^3.
	\end{gather*}
	We get the following multiplicative sets:
	\begin{gather*}
		\mathcal{T}_{x^ky} = \emptyset \; \text{ for } k \geq 4, \quad
		\mathcal{T}_{x^3y} = \T(x), \quad
		\mathcal{T}_{x^2y^2} = \emptyset, \quad 
		\mathcal{T}_{xy^3} = \emptyset, \quad
		\mathcal{T}_{y^4} = \T(y), \quad
		\mathcal{T}_{xy^2} = \T.
	\end{gather*}
	The following diagram illustrates the situation. The elements of the border with an empty cone are not colored in blue.
	\begin{center}
		\begin{tikzpicture}
			\draw[->] (0,0) -- (10.5,0) node[right] {\textit{x}};
			\draw[->] (0,0) -- (0,5) node[above] {\textit{y}};
			\node[below left] at (0,0) {\text{1}};
			\foreach \x/\y in {0/0, 1/0, 2/0, 3/0, 4/0, 5/0, 6/0, 7/0, 8/0, 9/0, 10/0, 0/1, 1/1, 2/1, 0/2, 0/3} {
				\node[fill,circle,inner sep=2pt] at (\x,\y) {};
			}
			\foreach \x/\y in {4/1, 5/1, 6/1, 7/1, 8/1, 9/1, 10/1, 2/2, 1/3} {
				\node[draw,circle,inner sep=2pt] at (\x,\y) {};
			}
			\foreach \x/\y in {3/1, 1/2, 0/4} {
				\node[draw,circle,blue,thick,inner sep=2pt] at (\x,\y) {};
			}
			\foreach \x/\y/\dx/\dy in {3/1/0.7/0,
				1/2/0.7/0, 1/2/0/0.7, 
				0/4/0/0.7} {
				\draw[blue, thick, ->] (\x,\y) -- (\x+\dx,\y+\dy);
			}
		\end{tikzpicture}
	\end{center}
	The reduction structure $\J$ has disjoint cones, however this does not imply Noetherianity. Indeed, consider the term $xy^3$: the following chain of reductions can be repeated indefinitely
	\begin{gather*}
		xy^3 \; \to_{G,\J} xy^3 - yg_{xy^2} = x^2y^2 + y^4 \; 
		\to_{G,\J} \; x^2y^2 + y^4 - g_{y^4} = x^2y^2 \; \to_{G,\J} \; \\
		\to_{G,\J} \; x^2y^2 - xg_{xy^2} = x^3y + xy^3 \; 
		\to_{G,\J} \; x^3y + xy^3 - g_{x^3y} = xy^3.
	\end{gather*}
\end{example}

Noetherianity and confluency follow when we order the terms of $\dO{1}$ increasingly by degree (terms of the same degree are ordered arbitrarily).
 
\begin{remark}
 We remark that, with the terms of the border ordered increasingly by degree, a term $\beta \in (\dO{1})$ is reduced using a term $\sigma \in \dO{1}$ of maximal degree dividing $\beta$.\\
 Furthermore, in this setting we always have $\{1\} \subseteq \mathcal{T}_{\sigma_i}$ for every $i$.
    Indeed, if $1 \notin \mathcal{T}_{\sigma_i}$, then there exists $j>i$ such that $\sigma_j\mid 1 \cdot \sigma_i$, which means $\sigma_j \mid \sigma_i$ with $\deg(\sigma_j) \geq \deg(\sigma_i)$. Thus $\sigma_j=\sigma_i$, which is a contradiction.
\end{remark}

The following proposition is a natural generalization of \cite[Proposition~2]{bertone2022close}.
\begin{proposition}\label{prop: noeth, direct sum, confl}
	Given an  order ideal $\Oid$, consider a border reduction structure $\J$ with the terms of the border $\dO{1}$ ordered increasingly by degree and a homogeneous $\dO{1}$-prebasis $G$. 
	\begin{enumerate}[label=(\roman*)]
		\item\label{prop: noeth, direct sum, confl 1} For all $\sigma \in \dO{1}, \tau \in \Oid_{\deg(\sigma)}, \eta \in \mathcal{T}_\sigma$, it holds $\ind{\Oid}{{\eta\sigma}} > \ind{\Oid}{\eta \tau}$.
		\item\label{prop: noeth, direct sum, confl 2} The border reduction relation $\to^+_G$ is Noetherian.
		\item\label{prop: noeth, direct sum, confl 3} It holds that $P = \langle \TT{G} \rangle \oplus \langle \Oid \rangle$, and $P_d = \langle \TT{G}_d \rangle \oplus \langle \Oid_d \rangle$	for every $d \geq 0$.
		\item\label{prop: noeth, direct sum, confl 4} The border reduction relation $\to^+_G$ is confluent.
	\end{enumerate}
\end{proposition}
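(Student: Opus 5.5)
For \ref{prop: noeth, direct sum, confl 1}, I would argue by contradiction via Proposition~\ref{prop: basic prop index}. Write $\sigma=\sigma_i$ and $\eta\in\mathcal{T}_{\sigma_i}$. Set $k=\ind{\Oid}{\eta\tau}$. Since $\tau\in\Oid$, Proposition~\ref{prop: basic prop index}\ref{prop: basic prop index 2} gives $k\le \deg(\eta)$.

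On the other side, I claim $\ind{\Oid}{\eta\sigma}=\deg(\eta)+1$. Suppose instead $\ind{\Oid}{\eta\sigma}=m\le\deg(\eta)$. Then by Proposition~\ref{prop: basic prop index}\ref{prop: basic prop index 1} we can write $\eta\sigma=\tau'\tau''$ with $\deg(\tau')=m$ and $\tau''\in\Oid$. Hence $\deg(\tau'')=\deg(\eta)+\deg(\sigma)-m\ge\deg(\sigma)$. Now take a divisor $\tau'''$ of $\eta\sigma$ in $\dO{1}$ built from $\tau''$, namely $x_j\tau''$ for suitable $x_j\mid\tau'$ chosen so that it is not in $\Oid$. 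If no such choice exists, walk up the chain $\tau''\mid\cdots\mid\eta\sigma$ until you first leave $\Oid$. This produces a border term $\sigma'\mid\eta\sigma$ with $\deg(\sigma')>\deg(\sigma)$. Because the border is ordered by degree, $\sigma'=\sigma_j$ with $j>i$, contradicting $\eta\in\mathcal{T}_{\sigma_i}$. The chain step works because $\eta\sigma\notin\Oid$ and $\Oid$ is closed under divisors.

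So $\ind{\Oid}{\eta\sigma}=\deg(\eta)+1>k$. This divisibility argument is the main point of the proposition; everything else follows from it.

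For \ref{prop: noeth, direct sum, confl 2}, reduction preserves homogeneous components by Lemma~\ref{lem: prop reduction}, so it suffices to treat homogeneous $f$ of degree $d$. In degree $d$ the terms of index at least $1$ have bounded index (at most $d$) and form a finite set. I would attach to $f$ the multiset of indices of its terms outside $\Oid$. A step $f\to_G f-c\eta g_\sigma$ does two things: it removes the term $\beta=\eta\sigma$ of index $\deg(\eta)+1$, and by \ref{prop: noeth, direct sum, confl 1} it only introduces terms of strictly smaller index. The multiset ordering is well-founded, so there is no infinite chain.

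For \ref{prop: noeth, direct sum, confl 3}, take $f\in P_d$. Reduce $f$ by \ref{prop: noeth, direct sum, confl 2} until no term lies in $(\dO{1})=\bigcup\C{\sigma}$. The result has support in $\Oid$ by Proposition~\ref{prop: basic prop borders}\ref{prop: basic prop borders 3}, and Lemma~\ref{lem: prop reduction} gives $P_d=\langle\TT{G}_d\rangle+\langle\Oid_d\rangle$.

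For the directness of the sum, take a nontrivial representation $\sum c_j\eta_jg_{\sigma_{i_j}}\in\langle\Oid\rangle$. Choose a summand whose head $\eta_j\sigma_{i_j}$ has maximal index. By Lemma~\ref{lem: disjoint cones} the heads are pairwise distinct, and by \ref{prop: noeth, direct sum, confl 1} tails have smaller index. Hence a maximal-index head survives in the sum, and it lies outside $\Oid$, a contradiction. Summing over $d$ gives the global statement.

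For \ref{prop: noeth, direct sum, confl 4}, suppose $f\to^+_G h$ and $f\to^+_G h'$ with both $h,h'$ supported in $\Oid$. Then $h-h'\in\langle\TT{G}\rangle\cap\langle\Oid\rangle=0$ by \ref{prop: noeth, direct sum, confl 3}, so the reduced form is unique.
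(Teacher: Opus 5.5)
Your proof is correct and follows essentially the same route as the paper: the index inequality in (i), termination of the reduction because each step replaces a term of index $\deg(\eta)+1$ by terms of strictly smaller index, the maximal-index head argument for directness in (iii), and confluence obtained from (iii). The only differences are that you prove (i) yourself via the divisor chain from $\tau''$ up to $\eta\sigma$, where the paper imports this from earlier work, and that your multiset ordering makes the termination argument in (ii) more explicit than the paper's.
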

\begin{proof}
	\begin{enumerate}[label=(\roman*), align=left, labelsep=0.5em, leftmargin=*]
		\item This is proved in \cite[Proposition~2]{bertone2022close} using \cite[Lemma~1]{bertone2022close}, which remain valid for an infinite order ideal.
		\item Consider $f,h \in P$ such that $f \to_G h$. More precisely, $h = f-c \eta g_{\sigma}$ with $\gamma={\eta\sigma} \in \supp{f} \cap \C{\sigma}$, $c \in \K$ the coefficient of $\gamma$ in $f$, and $g_{\sigma}$ the polynomial of $G_{\leq \deg(f)}$ such that $\h{g_{\sigma}}={\sigma}$. Consider a term $\delta \in \supp{h}\setminus\supp{f} \subseteq \supp{\eta g_{\sigma} - \eta\sigma}$, then by~\ref{prop: noeth, direct sum, confl 1} we have $\ind{\Oid}{\delta} < \ind{\Oid}{\gamma}$. Thus, during a reduction step, a term of index $k=\ind{\Oid}{\gamma}$ is removed from $f$ and replaced by terms $\delta$ with index strictly smaller than $k$. Moreover, $\deg(h)\leq \deg(f)$. 
		Therefore, starting from $f$, we reach a $(\dO{1})$-reduced polynomial after finitely many reduction steps: indeed, there are only finitely many terms of degree $d \leq \deg(f)$ with index smaller than or equal to $\ind{\Oid}{f}$. Hence, $\to^+_G$ is Noetherian.
		\item The first expression appears in a more general form in \cite[Corollary~6.3]{ceria2019general}. For sake of clarity, we reproduce the proof here in  our framework. \\
		By the Noetherianity of $\to^+_G$ proved in~\ref{prop: noeth, direct sum, confl 2}, 
        we have $P = \langle\TT{G} \rangle + \langle \Oid \rangle$.
		Furthermore, $\langle \TT{G} \rangle \cap \langle \Oid \rangle = \{0\}$; indeed, suppose $h \in \langle \TT{G} \rangle \cap \langle \Oid \rangle$, $h \neq 0$, and let $h = \sum_{j=1}^{r}c_j{\eta_j}g_{\sigma_{i_j}}$ be a representation of $h$ by $\TT{G}$. We choose an element of maximal index in the nonempty set $\left\{ {\eta_j\sigma_{i_j}} \mid j=1,\dots,r, \; c_j \neq 0 \right\}$. Without loss of generality we can suppose this element of maximal index  is ${\eta_1 \sigma_{i_1}}$. Then $\eta_1\sigma_{i_1}$ appears in the support of $h$: in fact, this term is different from $\eta_j\sigma_{i_j}$ for $j=2,\dots,r$ since by hypothesis $\J$ has disjoint cones, and it does not appear in the support of ${\eta_j}g_{\sigma_{i_j}} - {\eta_j\sigma_{i_j}}$ for some $j=1,\dots,r$ by maximality of the index of $\eta_1\sigma_{i_1}$ and~\ref{prop: noeth, direct sum, confl 1}. We get then a contradiction, since the support of $h$ is contained in $\Oid$. Then $h=0$. \\
		Moreover, if $f \in P_d$ is a homogeneous polynomial of degree $d$ and $f \to^+_G h$, we know by Lemma~\ref{lem: prop reduction} that $f-h \in \langle \TT{G}_d \rangle$ and $h \in P_d$; moreover, if $h$ is reduced, then $h \in \langle \Oid_d \rangle$. Hence, we conclude that $P_d = \langle \TT{G}_d \rangle \oplus \langle \Oid_d \rangle$.
		\item Let $f \in P$, $h_1,h_2 \in \langle\Oid\rangle$ such that $f \to^+_G h_1$ and $f \to^+_G h_2$. Then, $h_2-h_1 \in \langle\Oid\rangle$ and $h_2-h_1 = (f-h_1)-(f-h_2) \in \langle \TT{G} \rangle$. By~\ref{prop: noeth, direct sum, confl 3} we obtain $h_2-h_1=0$, i.e. $h_1=h_2$.
		\qedhere
	\end{enumerate}
\end{proof}

\begin{example}\label{es: noeth.}
    Consider the order ideal $\Oid$ and the border prebasis $G$ in Example~\ref{es: non noeth.}. We take a border reduction structure $\J'$ with the terms of the border ordered increasingly by degree
	\[
	xy^2, \; x^3y, \; x^2y^2, \; xy^3, \; y^4, \; x^4y, \; x^5y, \; x^6y, \; \dots.
	\]
	We get the following multiplicative sets:
	\begin{gather*}
		\mathcal{T}'_{xy^2} = \{1\}, \quad
		\mathcal{T}'_{x^3y} = \{1\}, \quad
		\mathcal{T}'_{x^2y^2} = \{1,x\}, \quad
		\mathcal{T}'_{xy^3} = \{1,x,x^2\}, \\
		\mathcal{T}'_{y^4} = \T(y) \cup x\cdot \T(y) \cup x^2\cdot\T(y) \cup x^3\cdot \T(y), \quad
		\mathcal{T}'_{x^ky} = \T(y) \; \text{ for } k \geq 4.
	\end{gather*}
	The following diagram illustrates the situation.
	\begin{center}
		\begin{tikzpicture}
			\draw[->] (0,0) -- (10.5,0) node[right] {\textit{x}};
			\draw[->] (0,0) -- (0,5) node[above] {\textit{y}};
			\node[below left] at (0,0) {\text{1}};
			\foreach \x/\y in {0/0, 1/0, 2/0, 3/0, 4/0, 5/0, 6/0, 7/0, 8/0, 9/0, 10/0, 0/1, 1/1, 2/1, 0/2, 0/3} {
				\node[fill,circle,inner sep=2pt] at (\x,\y) {};
			}
			\foreach \x/\y in {4/1, 5/1, 6/1, 7/1, 8/1, 9/1, 10/1, 2/2, 1/3, 3/1, 1/2, 0/4} {
				\node[draw,circle,blue,thick,inner sep=2pt] at (\x,\y) {};
			}
			\foreach \x/\y/\dx/\dy in {2/2/1/0,
				1/3/2/0,
				0/4/3/0} {
				\draw[blue, thick, -] (\x,\y) -- (\x+\dx,\y+\dy);
			}
			\foreach \x/\y/\dx/\dy in {0/4/0/0.7, 1/4/0/0.7, 2/4/0/0.7, 3/4/0/0.7,
				4/1/0/0.7,
				5/1/0/0.7,
				6/1/0/0.7,
				7/1/0/0.7,
				8/1/0/0.7,
				9/1/0/0.7,
				10/1/0/0.7} {
				\draw[blue, thick, ->] (\x,\y) -- (\x+\dx,\y+\dy);
			}
		\end{tikzpicture}
	\end{center}
	Now the chain of reductions starting with $xy^3$ comes to an end: $xy^3 \; \to_{G,\J'} \; xy^3 - g_{xy^3} = 0$.
\end{example}

Noetherianity and confluency lead to some notable consequences.
 
The following corollary appears in a more general form in \cite[Corollary~6.3]{ceria2019general}. For better clarity we include the proof here adapted to our specific setting.
\begin{corollary}\label{cor: eq reduction}
	Given an order ideal $\Oid$, consider a border reduction structure $\J$ with the terms of the border $\dO{1}$ ordered increasingly by degree and a homogeneous $\dO{1}$-prebasis $G$. Let $f \in P$, $h \in \langle \Oid \rangle$. Then the following statements are equivalent:
	\begin{enumerate}[label=(\roman*)]
		\item\label{it:cor eq red 1} $f-h \in \langle \TT{G} \rangle$;
		\item \label{it:cor eq red 2}$f \to^+_G h$;
		\item \label{it:cor eq red 3}$f-h$ has an $\ind{\Oid}{f}$-LRep by $\TT{G}$.
	\end{enumerate}
    Moreover, if $f$ is homogeneous of degree $d$ and $h \in \langle \Oid_d \rangle$ then:
    \[
    f-h \in \langle \TT{G}_d \rangle \quad\Longleftrightarrow\quad f \to^+_G h \quad\Longleftrightarrow\quad f-h \text{ has an } \ind{\Oid}{f} \text{-LRep by } \TT{G}_d.
    \]
\end{corollary}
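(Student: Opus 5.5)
We prove the cycle \ref{it:cor eq red 2}$\Rightarrow$\ref{it:cor eq red 3}$\Rightarrow$\ref{it:cor eq red 1}$\Rightarrow$\ref{it:cor eq red 2}. The homogeneous refinement is obtained by rerunning the same argument with Lemma~\ref{lem: prop reduction} and the graded decomposition of Proposition~\ref{prop: noeth, direct sum, confl}\ref{prop: noeth, direct sum, confl 3}.

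\ref{it:cor eq red 2}$\Rightarrow$\ref{it:cor eq red 3}. Assume $f \to^+_G h$. By Lemma~\ref{lem: prop reduction} we can write $f-h=\sum_{j=1}^s c_j\eta_jg_{\sigma_{i_j}}$, with the chain of reductions \eqref{eq:TGwriting}. Each $\eta_j\sigma_{i_j}$ is a term in the support of the intermediate polynomial $f_{j-1}=f-\sum_{l<j}c_l\eta_lg_{\sigma_{i_l}}$. By the argument in the proof of Proposition~\ref{prop: noeth, direct sum, confl}\ref{prop: noeth, direct sum, confl 2}, which rests on part~\ref{prop: noeth, direct sum, confl 1}, every reduction step replaces a term by terms of strictly smaller index. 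An induction on $j$ therefore gives $\ind{\Oid}{f_j}\le \ind{\Oid}{f}$, so $\ind{\Oid}{\eta_j\sigma_{i_j}}\le\ind{\Oid}{f}$ for all $j$.

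The one subtlety is that the writing must be a representation in the sense of Definition~\ref{def: TTGrep}, with distinct reductors. If the same $\eta g_\sigma$ occurs several times, we collect the coefficients; summands whose collected coefficient is zero are dropped. This preserves the index bound. If all summands cancel, we get the empty representation of $f-h=0$, which we accept as trivially an LRep.

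\ref{it:cor eq red 3}$\Rightarrow$\ref{it:cor eq red 1} is immediate, since an LRep is a representation by $\TT{G}$.

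\ref{it:cor eq red 1}$\Rightarrow$\ref{it:cor eq red 2}. Let $f-h\in\langle\TT{G}\rangle$ with $h\in\langle\Oid\rangle$. By Noetherianity (Proposition~\ref{prop: noeth, direct sum, confl}\ref{prop: noeth, direct sum, confl 2}), $f$ reduces to some $(\dO{1})$-reduced $h'$, so $f\to^+_G h'$. By Lemma~\ref{lem: prop reduction}, $f-h'\in\langle\TT{G}\rangle$. Then $h-h'\in\langle\TT{G}\rangle\cap\langle\Oid\rangle=\{0\}$ by Proposition~\ref{prop: noeth, direct sum, confl}\ref{prop: noeth, direct sum, confl 3}, so $h=h'$ and $f\to^+_G h$.

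There is one degenerate case to settle. If $f$ is already reduced and $h=f$, then $f\to^+_G f$ requires at least one step under a strict transitive closure. We therefore read $\to^+_G$ as allowing zero steps, as the paper implicitly does when it says ``reduces to a reduced form''.

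Homogeneous case. Let $f$ be homogeneous of degree $d$ and $h\in\langle\Oid_d\rangle$. For \ref{it:cor eq red 2}$\Rightarrow$\ref{it:cor eq red 3}, the last part of Lemma~\ref{lem: prop reduction} puts all reductors in $\TT{G}_d$, so the LRep is by $\TT{G}_d$. The implication \ref{it:cor eq red 3}$\Rightarrow$\ref{it:cor eq red 1} is again trivial. For \ref{it:cor eq red 1}$\Rightarrow$\ref{it:cor eq red 2}, we have $\langle\TT{G}_d\rangle\subseteq\langle\TT{G}\rangle$, so the general case applies.

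The main obstacle is the index bookkeeping in \ref{it:cor eq red 2}$\Rightarrow$\ref{it:cor eq red 3}, namely that intermediate polynomials never exceed $\ind{\Oid}{f}$. This follows from Proposition~\ref{prop: noeth, direct sum, confl}\ref{prop: noeth, direct sum, confl 1} together with Proposition~\ref{prop: basic prop index}\ref{prop: basic prop index 3}.
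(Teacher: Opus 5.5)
Your proof is correct. It differs from the paper's in one step, namely how the index bound in (iii) is obtained.

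The step (i)$\Rightarrow$(ii) is exactly the paper's: Noetherianity gives a reduced $h'$ with $f\to^+_G h'$, and the direct sum $\langle \TT{G}\rangle\oplus\langle\Oid\rangle$ forces $h=h'$.

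The paper proves (i)$\Rightarrow$(iii) statically, mirroring the proof of part (iii) of Proposition~\ref{prop: noeth, direct sum, confl}. It takes the unique representation of $f-h$ by $\TT{G}$ and picks a summand whose head term $\eta_s\sigma_{i_s}$ has maximal index $n$. That term cannot cancel, by disjoint cones and part (i) of that proposition. It cannot be absorbed by $h$ either, since $\supp{h}\subseteq\Oid$. Hence it lies in $\supp{f}$, which even yields the equality $n=\ind{\Oid}{f}$.

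You instead prove (ii)$\Rightarrow$(iii) dynamically, tracking the index along the reduction chain as in the Noetherianity argument. This uses only part (i) of the proposition: the tail terms $\eta\tau$ have smaller index than $\eta\sigma$. You need no cancellation or linear-independence argument. The LRep comes straight from the reduction, after merging repeated reductors, which you rightly note can occur. Since $\TT{G}$ is linearly independent, this is the same representation the paper uses.

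The cost of your route is small. You must pass through (i)$\Rightarrow$(ii) to get from membership to (iii), and you obtain only the inequality, which is all the statement asks for.

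Your remark on allowing zero steps in $\to^+_G$ is apt. The paper relies on that convention tacitly at the same point, when it picks $f'$ with $f\to^+_G f'$ although $f$ may already be reduced.
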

\begin{proof}
	\begin{enumerate}[label={}, align=left, labelsep=0.5em, leftmargin=*]
		\item[\ref{it:cor eq red 1}$\Rightarrow$\ref{it:cor eq red 2}:] 
        Assume $f-h\in \langle \TT{G}\rangle$. Thanks to Proposition~\ref{prop: noeth, direct sum, confl}, let $f'$ be the unique poynomial in $\langle \Oid\rangle$ such that $f\to^+_G f'$.
        Then we have that $f-h-(f-f')\in \langle \TT{G}\rangle \cap \langle \Oid\rangle$. By Proposition~\ref{prop: noeth, direct sum, confl}.\ref{prop: noeth, direct sum, confl 3}, we obtain $h=f'$.
		\item[\ref{it:cor eq red 2}$\Rightarrow$\ref{it:cor eq red 1}:] This is immediate, since a representation by $\TT{G}$ is given by the steps of the reduction.
		\item[\ref{it:cor eq red 1}$\Rightarrow$\ref{it:cor eq red 3}:] If $f-h \in \langle \TT{G} \rangle$, then $f-h$ has a unique representation $\sum_{j=1}^{r} c_j{\eta_j}g_{\sigma_{i_j}}$ by $\TT{G}$ with $c_j \neq 0$. Let ${\eta_s\sigma_{i_s}}$, for some $s \in \{1,\dots,r\}$, be an element of maximal index in the set $\left\{ {\eta_j \sigma_{i_j}} \mid j=1,\dots,r \right\}$, and let $n \coloneqq \ind{\Oid}{{\eta_s\sigma_{i_s}}}$. Then, 
        we trivially obtain that $f-h$ has an $n$-LRep by $\TT{G}$. Moreover, as 
        in the proof of Proposition~\ref{prop: noeth, direct sum, confl}.\ref{prop: noeth, direct sum, confl 3}, and since $\supp{h} \subseteq \Oid$, ${\eta_s\sigma_{i_s}}$ appears in the support of $f$ and has maximal index between the terms in the support of $\sum_{j=1}^{r} c_j{\eta_j}g_{\sigma_{i_j}}$. Thus, since $f=\sum_{j=1}^{r} c_j{\eta_j}g_{\sigma_{i_j}}+h$ and $\ind{\Oid}{h}=0$, we have $\ind{\Oid}{f} = n$.
		\item[\ref{it:cor eq red 3}$\Rightarrow$\ref{it:cor eq red 1}:] This is immediate from Definition~\ref{def: TTGrep}.
	\end{enumerate}
    Now assume $f \in P_d$ and $h \in \langle \Oid_d \rangle$. The thesis follows by the above equivalences and the final statement of Lemma~\ref{lem: prop reduction}.
\end{proof}

Together with Proposition~\ref{prop: noeth, direct sum, confl}.\ref{prop: noeth, direct sum, confl 3}, the next result generalizes to our context the polynomial representation given by the Border Division Algorithm \cite[Proposition~6.4.11]{kreuzer2005computational}.
\begin{corollary}\label{cor: eq element tg}
	Given an order ideal $\Oid$, consider a border reduction structure $\J$ with the terms of the border $\dO{1}$ ordered increasingly by degree and a homogeneous $\dO{1}$-prebasis $G$.
	\begin{enumerate}[label=(\roman*)]
		\item\label{cor: eq element tg 1} Consider $f \in \langle\TT{G}\rangle$. Then there exist polynomials $g_{\sigma_{i_1}},\dots,g_{\sigma_{i_r}} \in G_{\leq \deg(f)}$ and $q_1,\dots,q_r \in P$ such that 
		\begin{equation}\label{eq:cor eq elem tg 1}
		f = q_1g_{\sigma_{i_1}} + \cdots + q_rg_{\sigma_{i_r}} \; \text{ and } \; \deg(q_j) \leq \ind{\Oid}{f}-1 \text{ whenever } q_j \neq 0.
		\end{equation}
		\item\label{cor: eq element tg 2} Consider $f \in \langle \TT{G}_d \rangle$. Then in the writing \eqref{eq:cor eq elem tg 1}, we have $g_{\sigma_{i_1}},\dots,g_{\sigma_{i_r}} \in G_{\leq d}$ and $q_1,\dots,q_r \in P$ homogeneous of degree $\deg(q_j) = d-\deg(g_{\sigma_{i_j}})$.
	\end{enumerate}
\end{corollary}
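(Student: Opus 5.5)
The plan is to start from Corollary~\ref{cor: eq reduction} applied with $h=0$. Since $f\in\langle\TT{G}\rangle$, we have $f-0\in\langle\TT{G}\rangle$ and $0\in\langle\Oid\rangle$, so $f$ has an $\ind{\Oid}{f}$-LRep by $\TT{G}$. That is, there is a representation
\[
f=\sum_{j=1}^{s} c_j\eta_j g_{\sigma_{i_j}}
\]
with $\eta_j\in\mathcal{T}_{\sigma_{i_j}}$, $c_j\neq 0$ and $\ind{\Oid}{\eta_j\sigma_{i_j}}\le \ind{\Oid}{f}$ for every $j$. We then group the summands according to the border element: for each distinct $\sigma$ occurring among the $\sigma_{i_j}$, set $q_\sigma=\sum_{j:\,\sigma_{i_j}=\sigma} c_j\eta_j$. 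This gives $f=\sum_\sigma q_\sigma g_\sigma$ with finitely many terms. Any $q_\sigma$ that vanishes after grouping is simply dropped. Grouping does not actually cancel anything, because the pairs $(\eta_j,\sigma_{i_j})$ are distinct, but this does not matter.

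Two bounds remain. The first is the degree bound on each $q_\sigma$. Every $\sigma\in\dO{1}$ has index $1$, and $\eta\sigma$ is divisible by $\sigma\notin\Oid$, so its index is at least $1$. More precisely, by Proposition~\ref{prop: basic prop index}\ref{prop: basic prop index 1}, $\eta\sigma=\tau'\tau''$ with $\tau''\in\Oid$ and $\deg\tau'=\ind{\Oid}{\eta\sigma}$. I want to show $\ind{\Oid}{\eta\sigma}\ge \deg(\eta)+1$ for $\eta\in\mathcal{T}_\sigma$ when the border is ordered by degree. This is the main obstacle: the inequality is false for arbitrary $\eta$ (think of $\eta\sigma$ landing back close to $\Oid$ along another direction), so the multiplicativity must be used.

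Suppose instead that $\eta\sigma=\tau'\tau''$ with $\tau''\in\Oid$ and $\deg\tau'\le\deg\eta$. Then $\deg\tau''\ge\deg\sigma$. Choose a variable $x$ dividing $\tau'$, which exists since $\eta\sigma\notin\Oid$. Then $x\tau''$ divides $\eta\sigma$ and $x\tau''\notin\Oid$, so some divisor of $x\tau''$ lies outside $\Oid$. Taking $\tau''$ maximal in $\Oid$ with respect to divisibility of $\eta\sigma$, we get $x\tau''\in\dO{1}$. This is a border term of degree $\ge\deg\sigma+1>\deg\sigma$ dividing $\eta\sigma$, so it comes later in the ordering, contradicting $\eta\in\mathcal{T}_\sigma$.

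To make this rigorous I pick $\tau''\in\Oid$ dividing $\eta\sigma$ of maximal degree. By Proposition~\ref{prop: basic prop index}\ref{prop: basic prop index 1}, its degree is $\deg(\eta\sigma)-\ind{\Oid}{\eta\sigma}$. If $\ind{\Oid}{\eta\sigma}\le\deg\eta$, this degree is $\ge\deg\sigma$. Multiplying $\tau''$ by a variable of the cofactor yields a border term of degree $>\deg\sigma$ dividing $\eta\sigma$, which is the contradiction. Hence $\deg\eta\le\ind{\Oid}{\eta\sigma}-1\le\ind{\Oid}{f}-1$ for every $\eta$ appearing in $q_\sigma$.

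The second bound is that $g_\sigma\in G_{\le\deg f}$. Here I use the degree part of Lemma~\ref{lem: prop reduction} and the Noetherian reduction. Alternatively, I decompose $f$ into homogeneous components and note $P_d=\langle\TT{G}_d\rangle\oplus\langle\Oid_d\rangle$ (Proposition~\ref{prop: noeth, direct sum, confl}\ref{prop: noeth, direct sum, confl 3}). Uniqueness of the representation (the direct sum argument shows that $\TT{G}$ is linearly independent) then forces each homogeneous component of $f$ to be represented by elements of $\TT{G}$ of that degree. So all $\eta_j g_{\sigma_{i_j}}$ have degree $\le\deg f$, giving $\deg\sigma_{i_j}\le\deg f$.

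For \ref{cor: eq element tg 2}, the same uniqueness shows that every summand lies in $\TT{G}_d$. Hence each $\eta_j$ has degree $d-\deg\sigma_{i_j}$, so each $q_\sigma$ is homogeneous of degree $d-\deg(g_\sigma)$, and $\deg g_\sigma\le d$.
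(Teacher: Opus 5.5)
Your proposal is correct and follows the paper's proof. Both take the $\ind{\Oid}{f}$-LRep from Corollary~\ref{cor: eq reduction} with $h=0$, group the summands by border term, and bound $\deg(\eta_j)$ via $\ind{\Oid}{\eta_j\sigma_{i_j}}=\deg(\eta_j)+1$ for multiplicative $\eta_j$. The only differences are that you prove the needed inequality $\ind{\Oid}{\eta\sigma}\ge\deg(\eta)+1$ directly (maximal-degree divisor in $\Oid$ plus the degree-ordering of the border) instead of citing \cite[Lemma~1]{bertone2022close}, and you justify $\deg(\eta_j g_{\sigma_{i_j}})\le\deg(f)$ through the linear independence of $\TT{G}$, a step the paper only asserts.
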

\begin{proof}
	\begin{enumerate}[label=(\roman*), align=left, labelsep=0.5em, leftmargin=*]
		\item By 
        Corollary~\ref{cor: eq reduction}.\ref{it:cor eq red 3}, $f$ has an $\ind{\Oid}{f}$-LRep by $\TT{G}$. This means that we can write $f = \sum_{j=1}^{r}c_j{\eta_j}g_{\sigma_{i_j}}$, with $\ind{\Oid}{{\eta_i\sigma_{i_j}}} \leq \ind{\Oid}{f}$ and ${\eta_j}g_{\sigma_{i_j}} \in \TT{G}$. \\
		Since each $\eta_j g_{\sigma_{i_j}}$ is homogeneous, we have that ${\eta_j}g_{\sigma_{i_j}} \in P_{\leq \deg(f)}$. Thus we have $\deg(g_{\sigma_{i_j}}) \leq \deg(f)$. Moreover, since ${\eta_j} \in \mathcal{T}_{\sigma_{i_j}}$ and since $\dO{1}$ is ordered according to ascending degree, by \cite[Lemma~1]{bertone2022close} $\ind{\Oid}{{\eta_j\sigma_{i_j}}} = \deg({\eta_j})+1$, and so $\deg({\eta_j}) \leq \ind{\Oid}{f}-1$.
		\item By the final statement of Corollary~\ref{cor: eq reduction}, $f$ has an $\ind{\Oid}{f}$-LRep by $\TT{G}_d$, with $d=\deg(f)$. The polynomials of $G$ appearing in this representation have degreee $\leq d$, and the statement follows.
		\qedhere
	\end{enumerate}
\end{proof}

\begin{corollary}\label{cor prebasis generates}
Let $\Oid$ be an order ideal and $G$ be a homogeneous $\dO{1}$-prebasis. Then, for every $d\geq 0$, the residue classes of the elements of $\Oid_d$ generate the $\K$-vector space $P_d/(G)_d$.
\end{corollary}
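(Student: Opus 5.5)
The plan is to fix an auxiliary border reduction structure and apply the direct sum decomposition of Proposition~\ref{prop: noeth, direct sum, confl}.\ref{prop: noeth, direct sum, confl 3}. The statement itself mentions no reduction structure, so we are free to choose one. Order the terms of $\dO{1}$ increasingly by degree, breaking ties within each degree arbitrarily, and let $\J$ be the corresponding border reduction structure. Then all the hypotheses of Proposition~\ref{prop: noeth, direct sum, confl} hold for $\Oid$, $\J$ and $G$.

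Fix $d\geq 0$ and a class $\bar f\in P_d/(G)_d$ with representative $f\in P_d$. By Proposition~\ref{prop: noeth, direct sum, confl}.\ref{prop: noeth, direct sum, confl 3}, $P_d=\langle \TT{G}_d\rangle\oplus\langle\Oid_d\rangle$, so we can write $f=t+h$ with $t\in\langle\TT{G}_d\rangle$ and $h\in\langle\Oid_d\rangle$. Equivalently, by Noetherianity and Lemma~\ref{lem: prop reduction}, $f\to^+_G h$ with $h$ homogeneous of degree $d$ and reduced.

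Next, every element of $\TT{G}_d$ has the form $\eta g_\sigma$, a multiple of an element of $G$. Such an element is homogeneous of degree $d$, so $\langle\TT{G}_d\rangle\subseteq (G)_d$, as the paper already observed. Hence $f-h\in(G)_d$ and $\bar f=\bar h$. Since $h$ is a $\K$-linear combination of terms in $\Oid_d$, the class $\bar f$ is a linear combination of residue classes of elements of $\Oid_d$, which proves the claim.

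There is no real obstacle here. The only point needing care is that the corollary is stated independently of any reduction structure, so we must say explicitly that a degree-increasing labeling always exists and that the conclusion does not depend on which one we pick. Note also that the corollary only asserts spanning, not linear independence of the classes; independence is exactly what will distinguish a basis from a prebasis.
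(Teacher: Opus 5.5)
Your proof is correct and follows the paper's argument exactly. Like the paper, you fix a border reduction structure with $\dO{1}$ ordered increasingly by degree, use the decomposition $P_d=\langle \TT{G}_d\rangle\oplus\langle\Oid_d\rangle$ from Proposition~\ref{prop: noeth, direct sum, confl}, and conclude from $\langle\TT{G}_d\rangle\subseteq (G)_d$; your remark that a degree-increasing labeling always exists (each $\dO{1}_d$ being finite) is a harmless clarification.
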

\begin{proof}
Take a border reduction structure $\J$ with the terms of the border $\dO{1}$ ordered increasingly by degree. By Proposition~\ref{prop: noeth, direct sum, confl}.\ref{prop: noeth, direct sum, confl 3}, 
we obtain that 
\[
P_d / \langle\TT{G}_d\rangle \cong \langle\Oid_d\rangle \quad \forall d \geq 0,
\]
and, since $\langle \TT{G}_d \rangle \subseteq (G)_d$, it follows that the residue classes of the elements of $\Oid_d$ generate the $\K$-vector space $P_d/(G)_d$.
\end{proof}

\section{Homogeneous Border Bases}\label{sec basis}

From now on, we always consider an infinite order ideal $\Oid$ 
and a border reduction structure with the terms of $\dO{1}$ ordered increasingly by degree.

In Corollary \ref{cor prebasis generates} we proved that if $G$ is a $\dO{1}$-prebasis, then $P_d/(G)_d$ is generated as a $\K$-vector space by the residue classes of the terms in $\Oid_d$.   However, we note that, in general, this system of generators is not necessarily a basis, as we can see from the following example.
\begin{example}\label{es: generators}
	Let $f = x^3y^3 \in P=\K[x,y]$. Consider the order ideal $\Oid$, the border reduction structures $\J$, $\J'$ and the homogeneous $\dO{1}$-prebasis $G$ in Example~\ref{es: border reductors}. We observed that $\langle \mathcal{T}_{\J}G\rangle \neq \langle \mathcal{T}_{\J'}G\rangle$. We reduce $f$ with respect to both $\J$ and $\J'$: in the first case we find that $f$ can be written as $f = yg_{x^3y^2} - x^5y \in \langle {\mathcal{T}_{\J}G}_6 \rangle \oplus \langle \Oid_6 \rangle$, while in the second case we find $f = xg_{x^2y^3} - x^6 \in \langle \mathcal {\mathcal{T}_{\J'}G}_6 \rangle \oplus \langle \Oid_6 \rangle$.
    Then we have $x^6-x^5y = xg_{x^2y^3} - yg_{x^3y^2} \in (G)_6$, that is $\overline{x^6}-\overline{x^5y} = \overline{0}$ in $P_6/(G)_6$. This means that the residue classes of the elements of $\Oid_6 = \{x^6,x^5y\}$ are not $\K$-linearly independent.
\end{example}


The following definition is the obvious generalization of \cite[Definition~4.3]{ceria2019general} and \cite[Definition~6.4.13]{kreuzer2005computational}.
\begin{definition}\label{def: border basis}
	Let $G$ be a homogeneous $\dO{1}$-prebasis, and let $J \subseteq P$ be a homogeneous ideal containing $G$. The set $G$ is called a \emph{homogeneous $\dO{1}$-basis} of $J$ if one of the following equivalent conditions is satisfied:
	\begin{enumerate}[label=(\arabic*)]
		\item for every $d \geq 0$, the residue classes 
        of the terms in $\Oid_d$ form a $\K$-vector space basis of $P_d / J_d$;
		\item \label{it:def border basis2}for every $d \geq 0$, $P_d = J_d \oplus \langle \Oid_d \rangle$;
		\item\label{it:def border basis3} for every $d \geq 0$, $J_d \cap \langle \Oid_d \rangle = \{0\}$.
	\end{enumerate}
\end{definition}

It is immediate to prove that a homogeneous $\dO{1}$-basis $G$ of $J$ actually generates $J$. The following is a straightforward analogue of \cite[Proposition~4.3.2]{KKR2005}, \cite[Proposition~6.4.15]{kreuzer2005computational}.
\begin{proposition}\label{prop: G generates J}
	Let $G$ be a homogeneous $\dO{1}$-basis of a homogeneous ideal $J \subseteq P$. Then, for every $d \geq 0$ we have $J_d = (G)_d$, and $J=(G)$.
\end{proposition}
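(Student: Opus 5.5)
Fix a degree $d \geq 0$. Since $G \subseteq J$ and $J$ is a homogeneous ideal, we have $(G) \subseteq J$, hence $(G)_d \subseteq J_d$. The whole proof therefore reduces to the reverse inclusion $J_d \subseteq (G)_d$. I will use the decomposition already available for any prebasis, namely Corollary~\ref{cor prebasis generates}, or more precisely Proposition~\ref{prop: noeth, direct sum, confl}.\ref{prop: noeth, direct sum, confl 3}. For this I choose a border reduction structure $\J$ with $\dO{1}$ ordered increasingly by degree, which is the standing convention of this section.

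Take $f \in J_d$. By Proposition~\ref{prop: noeth, direct sum, confl}.\ref{prop: noeth, direct sum, confl 3}, $P_d = \langle \TT{G}_d\rangle \oplus \langle \Oid_d\rangle$, so I write $f = t + h$ with $t \in \langle \TT{G}_d\rangle$ and $h \in \langle \Oid_d\rangle$. Concretely, $h$ is the reduced form obtained by $f \to^+_G h$, and Lemma~\ref{lem: prop reduction} guarantees that $h$ is homogeneous of degree $d$. Since $\langle \TT{G}_d\rangle \subseteq (G)_d \subseteq J_d$, we get $h = f - t \in J_d \cap \langle \Oid_d\rangle$. Condition~\ref{it:def border basis3} of Definition~\ref{def: border basis} says this intersection is $\{0\}$, so $h = 0$. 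Hence $f = t \in \langle \TT{G}_d\rangle \subseteq (G)_d$, which gives $J_d = (G)_d$. As a byproduct we also get $J_d = \langle \TT{G}_d\rangle$.

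Finally, both $J$ and $(G)$ are homogeneous ideals: $J$ by hypothesis, and $(G)$ because $G$ consists of homogeneous polynomials. So each is the direct sum of its homogeneous components, and the equality of all components gives $J = (G)$. There is no real obstacle here. The only point needing care is that the splitting of $f$ stays inside degree $d$, and this is exactly the homogeneous part of Proposition~\ref{prop: noeth, direct sum, confl}.\ref{prop: noeth, direct sum, confl 3} together with Lemma~\ref{lem: prop reduction}.
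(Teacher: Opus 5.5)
Your proof is correct and takes essentially the paper's route: you split $f \in J_d$ along $P_d = \langle \TT{G}_d\rangle \oplus \langle \Oid_d\rangle$ and use the basis property to kill the $\langle \Oid_d\rangle$-component. The only cosmetic difference is that the paper uses Corollary~\ref{cor: eq element tg} to write the first part as $\sum q_\sigma g_\sigma$ and kills the second via linear independence of the classes of $\Oid_d$ in $P_d/J_d$, whereas you invoke the equivalent condition~\ref{it:def border basis3} of Definition~\ref{def: border basis} directly.
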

\begin{proof}
	 By definition, $(G) \subseteq J$, and thus $(G)_d \subseteq J_d$ for every $d\geq 0$. To prove the converse inclusion, let $f \in J_d$. By Proposition~\ref{prop: noeth, direct sum, confl} and Corollary~\ref{cor: eq element tg}, the polynomial $f$ can be written as 
	\begin{equation}\label{eq:prop base genera}
	f = \sum_{g_\sigma \in G_{\leq d}}q_\sigma g_\sigma + \sum_{\tau \in \Oid_d}c_\tau \tau,
	\end{equation}
	where $c_\tau \in \K$, and $q_\sigma \in P$ is a linear combination of elements in $\mathcal{T}_\sigma$ such that $\deg(q_\sigma g_\sigma)=d$ for every $\sigma$. This implies 
	\[
	\overline{0} = \overline{f} = \sum_{\tau \in \Oid_d}c_\tau \overline{\tau} \quad \text{ in } P_d/J_d.
	\]
	By assumption, the residue classes of $\Oid_d$  are $\K$-linearly independent; hence $c_\tau = 0$ for every $\tau$, and hence \eqref{eq:prop base genera} reduces to
	\[
	f = \sum_{g_\sigma \in G_{\leq d}}q_\sigma g_\sigma \in (G)_d.
	\]
	Thus, $J_d \subseteq (G)_d$, and $J=(G)$ being $J$ a homogeneous ideal.
\end{proof}

\subsection{Existence and Uniqueness}

 
A necessary condition for the existence of a homogeneous $\dO{1}$-basis for the homogeneous ideal $J\subseteq P$ is clearly given by 
\[
|\Oid_d| = \dim_\K(P/J)_d, \, \text{ for every } d \geq 0,
\]
i.e. the Hilbert function of $P/J$ is the cardinality of $\Oid_d$ for every $d$. 
However, our next example shows that this condition is not sufficient.
\begin{example}
	Let $P=\K[x,y]$, and let $J=(xy)$. Therefore $\dim_\K(P/J)_0 = 1$ and $\dim_\K(P/J)_\ell = 2$ for every $\ell \geq 1$. 
	Let $\Oid \subseteq \T$ be the order ideal $\Oid = \T(x) \cup y\cdot\T(x)$. Then $|\Oid_d| = \dim_\K(P/J)_d, \, \forall d \geq 0$. 
	Nevertheless, there is no homogeneous $\dO{1}$-basis contained in $J$; indeed, for $d \geq 2$, the residue classes of the elements of $\Oid_d = \{x^d,x^{d-1}y\}$ are not a $\K$-vector space basis of $(P/J)_d$, since $x^{d-1}y \in J_d$.
\end{example}

The following is a straightforward generalization of \cite[Theorem~4.3.4]{KKR2005}, \cite[Proposition~6.4.17]{kreuzer2005computational}.
\begin{proposition}[\textbf{Existence and Uniqueness}]\label{prop: eu basis}
	Let $\Oid$ be an infinite order ideal, let $J \subseteq P$ be a homogeneous ideal, and assume that, for every $d \geq 0$, the residue classes of the elements of $\Oid_d$ form a $\K$-vector space basis of $P_d/J_d$.
	\begin{enumerate}[label=(\roman*)]
		\item \label{it:eu basis1} There exists a unique homogeneous $\dO{1}$-basis of $J$.
		\item \label{it:eu basis2}Let $G$ be a homogeneous $\dO{1}$-prebasis whose elements are in $J$. Then $G$ is the homogeneous $\dO{1}$-basis of $J$.
	\end{enumerate}
\end{proposition}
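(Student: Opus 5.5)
For \ref{it:eu basis1}, I construct the basis directly from the hypothesis. Fix $\sigma \in \dO{1}$ and put $d=\deg(\sigma)$. Since the residue classes of $\Oid_d$ form a basis of $P_d/J_d$, the class of $\sigma$ has a unique expression $\overline{\sigma}=\sum_{\tau\in\Oid_d} c_{\sigma\tau}\overline{\tau}$. Equivalently, by condition \ref{it:def border basis2} of Definition~\ref{def: border basis}, $P_d=J_d\oplus\langle\Oid_d\rangle$, so $\sigma$ decomposes uniquely as an element of $J_d$ plus an element of $\langle \Oid_d\rangle$. Define
\[
g_\sigma=\sigma-\sum_{\tau\in\Oid_d}c_{\sigma\tau}\tau .
\]
Then $g_\sigma\in J_d$ and it has exactly the shape required in Definition~\ref{def: prebasis}. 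Hence $G=\{g_\sigma\}_{\sigma\in\dO{1}}$ is a homogeneous $\dO{1}$-prebasis contained in $J$. Since the hypothesis is precisely condition (1) of Definition~\ref{def: border basis}, $G$ is a homogeneous $\dO{1}$-basis of $J$. This gives existence.

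For uniqueness, let $G'=\{g'_\sigma\}$ be another homogeneous $\dO{1}$-basis of $J$. Fix $\sigma$ of degree $d$. Both $g_\sigma$ and $g'_\sigma$ lie in $J_d$, because the definition of basis requires $G'\subseteq J$. Both have the form $\sigma$ minus an element of $\langle\Oid_d\rangle$, so $g_\sigma-g'_\sigma\in J_d\cap\langle\Oid_d\rangle$. This intersection is $\{0\}$ by condition \ref{it:def border basis3} of Definition~\ref{def: border basis}, so $g_\sigma=g'_\sigma$.

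For \ref{it:eu basis2}, the same argument applies verbatim: any prebasis $G$ with elements in $J$ satisfies $g_\sigma-\tilde g_\sigma\in J_d\cap\langle\Oid_d\rangle=\{0\}$, where $\tilde g_\sigma$ is the polynomial constructed in \ref{it:eu basis1}. Thus $G$ equals the unique basis. One could also argue directly: $G\subseteq J$ together with the hypothesis is exactly the definition of a basis, and uniqueness then identifies $G$ with the basis of \ref{it:eu basis1}.

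No real obstacle arises. The only point to check is that the definition of a homogeneous $\dO{1}$-basis requires only $G\subseteq J$ and the degree-wise vector space condition. It does not require $(G)=J$, which in any case follows from Proposition~\ref{prop: G generates J}. The infiniteness of $\Oid$ plays no role, because every step is degree-wise and each $\Oid_d$ is finite.
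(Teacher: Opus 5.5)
Your proof is correct and follows essentially the same route as the paper: you build $g_\sigma$ from the expression of $\overline{\sigma}$ in terms of the classes of $\Oid_d$, note that the resulting prebasis lies in $J$ and is therefore a basis by definition, and obtain uniqueness from $J_d\cap\langle\Oid_d\rangle=\{0\}$. For (ii), the paper uses your second variant: $G$ is a basis by definition, and the uniqueness in (i) identifies it with the basis constructed there.
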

\begin{proof}
	\begin{enumerate}[label=(\roman*), align=left, labelsep=0.5em, leftmargin=*]
		\item For $d\geq 0$, consider $\sigma \in \dO{1}_d$; by hypothesis, the residue class of $\sigma$ in $P_d/J_d$ is linearly dependent on the residue classes of the elements of $\Oid_d$, that is 
		\[
		\overline{\sigma}=\sum_{\tau \in \Oid_d}c_{\sigma\tau}\overline{\tau}
		\]
		with $c_{\sigma\tau} \in \K$. Therefore $J_d$ contains the polynomial
		\[
		g_\sigma = \sigma - \sum_{\tau \in \Oid_d}c_{\sigma\tau}\tau.
		\] 
		Observe that $\supp{g_\sigma} \cap \dO{1} = \{\sigma\}$, and we set $\h{g_\sigma}=\sigma$. Then 
		\[
		G = \left\{ g_\sigma \mid \sigma \in \dO{1}_d,\, d \geq 0 \right\} \subseteq J
		\]
		is a homogeneous $\dO{1}$-prebasis, and hence a homogeneous $\dO{1}$-basis of $J$ by Definition~\ref{def: border basis}.
        
		Now, let $G' = \left\{ g'_\sigma \mid \sigma \in \dO{1} \right\}$ be another homogeneous $\dO{1}$-basis of $J$. 
        Observe that for every $\sigma\in \dO{1}$, $g_\sigma-g'_\sigma$ belongs to $\langle \Oid\rangle$, since $\supp{g_\sigma}\cap \dO{1}=\supp{g'_\sigma}\cap \dO{1}=\{\sigma\}$.
        Therefore, $g_\sigma-g'_\sigma \in J_{\deg(\sigma)} \cap \langle\Oid_{\deg(\sigma)}\rangle$, hence $g_\sigma=g'_\sigma$ by Definition~\ref{def: border basis}.\ref{it:def border basis3}, since $G$ is a homogeneous $\dO{1}$-border basis of $J$.
		\item By Definition~\ref{def: border basis}, it suffices to observe that the set $G$ is a homogeneous $\dO{1}$-basis of $J$ and to apply~\ref{it:eu basis1}.
		\qedhere
	\end{enumerate}
\end{proof}

One might wonder whether a given homogeneous positive-dimensional ideal $J$ possesses a homogeneous border basis at all. By Proposition~\ref{prop: eu basis}.\ref{it:eu basis1}, we can rephrase this question as follows: Given a homogeneous nonzero-dimensional ideal $J$, are there order ideals such that, for every $d \geq 0$, the residue classes of their elements of degree $d$ form a $\K$-vector space basis of $(P/J)_d$? The answer is yes, and the argument is straightforward using Gr\"obner bases.
 
We recall that, given a term ordering $\preceq$ on $\T$, the initial ideal $\inid{\preceq}{J}$ of $J$ is the monomial ideal generated by all the leading terms of the polynomials in $J$: $\inid{\preceq}{J} = \left( \lt{\preceq}{f} : f \in J \setminus \{0\} \right)$.
\par
The following proposition is showed in \cite[Proposition~4.3.6]{KKR2005}, whose proof also remains valid for positive-dimensional ideals.
\begin{proposition}\label{prop: extension groebner}
	Let $J \subseteq P$ be a homogeneous positive-dimensional ideal, let $\preceq$ be a 
    term ordering on $\T$, and let $\Oid_{\preceq,J}$ be the order ideal $\T \setminus \inid{\preceq}{J}$. Then there exists a unique homogeneous $\Oid_{\preceq,J}$-border basis of $J$.
\end{proposition}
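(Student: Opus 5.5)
The plan is to reduce the statement to Proposition~\ref{prop: eu basis}.\ref{it:eu basis1}. It suffices to check two things: that $\Oid_{\preceq,J}=\T\setminus\inid{\preceq}{J}$ is an order ideal, and that for every $d\geq 0$ the residue classes of the terms in $(\Oid_{\preceq,J})_d$ form a $\K$-basis of $P_d/J_d$. Existence and uniqueness of the homogeneous border basis then follow at once from that proposition.

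\textbf{Order ideal.} The set $\inid{\preceq}{J}$ is a monomial ideal, so it is closed under multiplication by terms. Hence its complement in $\T$ is closed under taking divisors: if $\tau'\mid\tau$ and $\tau'\in\inid{\preceq}{J}$, then $\tau\in\inid{\preceq}{J}$. The complement is non-empty because $J$ is positive-dimensional, hence proper, so $1\notin\inid{\preceq}{J}$. Moreover, $\Oid_{\preceq,J}$ is infinite because $P/J$ has positive Krull dimension; this is consistent with the standing assumption of the section. Alternatively, the Hilbert function computed below is eventually nonzero, so $\Oid_{\preceq,J}$ has terms in infinitely many degrees.

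\textbf{Basis in each degree (Macaulay's theorem, graded version).} Fix $d$.

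For linear independence, take a nonzero $f\in\langle(\Oid_{\preceq,J})_d\rangle\cap J_d$. Its leading term $\lt{\preceq}{f}$ lies in $\inid{\preceq}{J}$, yet it lies in $\supp{f}\subseteq\Oid_{\preceq,J}$, which is a contradiction. So $J_d\cap\langle(\Oid_{\preceq,J})_d\rangle=\{0\}$.

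For spanning, argue by contradiction. Suppose some $f\in P_d$ is not congruent modulo $J_d$ to an element of $\langle(\Oid_{\preceq,J})_d\rangle$. Among all such $f$ (modifying within the coset $f+J_d$), choose one with minimal leading term; this is possible since $\preceq$ is a well-order. The support of $f$ must contain some term in $\inid{\preceq}{J}$; let $t$ be the largest such term. Since $J$ is homogeneous, $\inid{\preceq}{J}$ is generated by leading terms of homogeneous elements of $J$. So there is a homogeneous $g\in J$ with $\lt{\preceq}{g}\mid t$, and we may rescale to get $g'\in J_d$ with $\lt{\preceq}{g'}=t$. Subtracting the appropriate multiple of $g'$ removes $t$ and introduces only smaller terms. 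By induction on the largest term of the support lying in $\inid{\preceq}{J}$, which is well-founded, $f$ reduces to an element of $\langle(\Oid_{\preceq,J})_d\rangle$ modulo $J_d$. This is the standard division argument, and the homogeneity of $J$ keeps everything in degree $d$.

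\textbf{Main obstacle.} There is no real obstacle; the point needing care is the homogeneity in the spanning step. We must ensure that the elements of $J$ whose leading terms generate $\inid{\preceq}{J}$ can be taken homogeneous. This holds because for homogeneous $J$, each homogeneous component of any $f\in J$ lies in $J$, and $\lt{\preceq}{f}$ is the leading term of one of these components. With the two checks done, we apply Proposition~\ref{prop: eu basis}.\ref{it:eu basis1} to conclude.
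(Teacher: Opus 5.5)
Your proposal is correct and follows the same route as the paper. The paper simply invokes \cite[Proposition~4.3.6]{KKR2005}, whose argument is Macaulay's basis theorem (which you prove directly in its degree-wise form) followed by the existence and uniqueness result, here Proposition~\ref{prop: eu basis}.\ref{it:eu basis1}. The only blemish is in the spanning step: choosing an element of the coset with minimal leading term gives no contradiction when $\lt{\preceq}{f}$ lies in $\Oid_{\preceq,J}$, so that framing is superfluous, but your induction on the largest support term lying in $\inid{\preceq}{J}$ is sound on its own.
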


However, the next example shows that not every homogeneous border basis of $J$ arises from a term ordering. In this sense, the theory of homogeneous border bases of positive-dimensional ideals generalizes the theory of their Gr\"obner bases.
\begin{example}\label{es: extension groebner}
	In the polinomial ring $P=\K[x,y]$, with $\mathrm{char}(\K)=0$, consider the ideal $J=(x^3+x^2y+y^3)$.
	Since a Gr\"obner basis of $J$ is $\{x^3+x^2y+y^3\}$, and since the leading term of the polynomial $x^3+x^2y+y^3$ is either $x^3$ or $y^3$, the ideal $J$ has two possible initial ideals: namely, the ideals $I^{(1)} = (x^3)$ and $I^{(2)} = (y^3)$. They respectively lead to the order ideals $\Oid^{(1)} = \T \setminus I^{(1)} = \T(y) \cup x\cdot\T(y) \cup x^2\cdot\T(y)$ and $\Oid^{(2)} = \T \setminus I^{(2)} = \T(x) \cup y\cdot\T(x) \cup y^2\cdot\T(x)$.
	We now consider another order ideal $\Oid$ such that $|\Oid_d| = \dim_\K(P/J)_d, \, \forall d \geq 0$, namely $\Oid = \T \setminus (x^2y) = \T(x) \cup \T(y) \cup x\cdot\T(y)$. 
	The following diagrams illustrate the three order ideals.
	\begin{center}
		\begin{tikzpicture}
			\draw[->] (0,0) -- (5.5,0) node[right] {\textit{x}};
			\draw[->] (0,0) -- (0,5.5) node[above] {\textit{y}};
			\node[below left] at (0,0) {\text{1}};
			\foreach \x/\y in {0/0, 0/1, 0/2, 0/3, 0/4, 0/5, 1/0, 1/1, 1/2, 1/3, 1/4, 1/5, 2/0, 2/1, 2/2, 2/3, 2/4, 2/5} {
				\node[fill,circle,inner sep=2pt] at (\x,\y) {};
			}
			\foreach \x/\y in {3/0, 3/1, 3/2, 3/3, 3/4, 3/5} {
				\node[draw,circle,inner sep=2pt] at (\x,\y) {};
			}
			\node at (3,6.7) {$\Oid^{(1)}$};
		\end{tikzpicture}
		\quad
		\begin{tikzpicture}
			\draw[->] (0,0) -- (5.5,0) node[right] {\textit{x}};
			\draw[->] (0,0) -- (0,5.5) node[above] {\textit{y}};
			\node[below left] at (0,0) {\text{1}};
			\foreach \x/\y in {0/0, 1/0, 2/0, 3/0, 4/0, 5/0, 0/1, 1/1, 2/1, 3/1, 4/1, 5/1, 0/2, 1/2, 2/2, 3/2, 4/2, 5/2} {
				\node[fill,circle,inner sep=2pt] at (\x,\y) {};
			}
			\foreach \x/\y in {0/3, 1/3, 2/3, 3/3, 4/3, 5/3} {
				\node[draw,circle,inner sep=2pt] at (\x,\y) {};
			}
			\node at (3,6.7) {$\Oid^{(2)}$};
		\end{tikzpicture}
		\quad
		\begin{tikzpicture}
			\draw[->] (0,0) -- (5.5,0) node[right] {\textit{x}};
			\draw[->] (0,0) -- (0,5.5) node[above] {\textit{y}};
			\node[below left] at (0,0) {\text{1}};
			\foreach \x/\y in {0/0, 0/1, 0/2, 0/3, 0/4, 0/5, 1/0, 1/1, 1/2, 1/3, 1/4, 1/5, 2/0, 3/0, 4/0, 5/0} {
				\node[fill,circle,inner sep=2pt] at (\x,\y) {};
			}
			\foreach \x/\y in {2/1, 2/2, 2/3, 2/4, 2/5, 3/1, 4/1, 5/1} {
				\node[draw,circle,inner sep=2pt] at (\x,\y) {};
			}
			\node at (3,6.7) {$\Oid$};
		\end{tikzpicture}
	\end{center}
	Since $\Oid \neq \Oid^{(1)}$ and $\Oid \neq \Oid^{(2)}$, this order ideal does not correspond to an initial ideal arising from any term order. Yet, we have that the residue classes of the elements in $\Oid_d$ form a $\K$-vector space basis of $P_d/J_d, \, \forall d \geq 0$. 
	Indeed, we clearly have $P_d/J_d \cong  \langle \Oid_d \rangle$ for $d\leq 3$,
	and we aim to show that
	\[
	P_d/J_d \cong \langle x^d, xy^{d-1}, y^d \rangle = \langle \Oid_d \rangle \quad \forall d \geq 4.
	\]
	To do so, let us define the sequence in $\K$: $k_0=k_1=k_2 \coloneqq 1, \; k_i \coloneqq k_{i-1}+k_{i-3}$ for $i \geq 3$. \\
	First, we show by induction on $d \geq 4$ that:
	\begin{equation}\label{eq: es base to 1}
		\begin{aligned}
			x^d &
            = \left( \sum_{i=0}^{d-3} (-1)^{i}k_ix^{d-3-i}y^{i} \right)\cdot\left( x^3+x^2y+y^3 \right) + \\&\quad + 
            (-1)^{d-3}k_{d-4}xy^{d-1} + (-1)^{d-2}k_{d-2}x^2y^{d-2} + (-1)^{d-2}k_{d-3}y^d.
		\end{aligned}
	\end{equation}
	If $d=4$, the identity is verified by a direct calculation.
	By inductive hypothesis, suppose that (\ref{eq: es base to 1}) holds true, and let us prove it for $d+1$. Multiplying both sides of (\ref{eq: es base to 1}) by $x$, we have
	\begin{align*}
    x^{d+1}&
    = \left( \sum_{i=0}^{d-3} (-1)^{i}k_ix^{d-2-i}y^{i} \right)\cdot\left( x^3+x^2y+y^3 \right) + \\&\quad +
        (-1)^{d-3}k_{d-4}x^2y^{d-1} + (-1)^{d-2}k_{d-2}x^3y^{d-2} + (-1)^{d-2}k_{d-3}xy^d
	\end{align*}
	and by adding and subtracting the quantity $\left( (-1)^{d-2}k_{d-2}x^{0}y^{d-2} \right)\cdot\left( x^3+x^2y+y^3 \right)$ to the right-hand side of this equality, we obtain the desired expression also for $x^{d+1}$. 
    \\
	Now, let us fix $d \geq 4$. Equality~(\ref{eq: es base to 1}) leads to the equality in $P_d/J_d$
	\begin{equation}\label{eq: es base to 2}
		\overline{x^2y^{d-2}} = k_{d-2}^{-1} \left( (-1)^{d-2}\overline{x^d} +k_{d-4}\overline{xy^{d-1}} -k_{d-3}\overline{y^d} \right).
	\end{equation}
	Moreover,  for  $4 \leq i \leq d-1$, 
	\begin{equation}\label{eq: es base to 3}
		\begin{aligned}
			\overline{x^3y^{d-3}} &= \left( -\overline{x^2y} -\overline{y^3} \right)\overline{y^{d-3}} = -\overline{x^2y^{d-2}} -\overline{y^d}, \\
			\overline{x^iy^{d-i}} &= 
            (-1)^{i-3}k_{i-4}\overline{xy^{d-1}} + (-1)^{i-2}k_{i-2}\overline{x^2y^{d-2}} + (-1)^{i-2}k_{i-3}\overline{y^d}. \\
					\end{aligned}
	\end{equation}
	Then, $\left\{ \overline{x^d},\overline{xy^{d-1}},\overline{y^d} \right\}$ generate $P_d/J_d$, since we can substitute expression~(\ref{eq: es base to 2}) in equations~(\ref{eq: es base to 3}), and are linearly independent, since $\dim_\K(P/J)_d = 3 = |\Oid_d|$.
\end{example}

As a homogeneous positive-dimensional ideal $J$ always admits a homogeneous border basis $G$ (Proposition~\ref{prop: eu basis}), we will from now on identify $J$ with $(G)$.

We establish now a first characterization for a border prebasis $G$ to be a border basis  by the border reductors used in the rewriting process. This characterization adapts \cite[Proposition~14]{kehrein2005characterizations}, \cite[Proposition~6.4.28]{kreuzer2005computational} to our context, also taking into account \cite[Theorem~7.7]{ceria2019general}.

\begin{theorem}[\textbf{Characterization by Border Reductors}] \label{char_rewrite_relations} 
	Let $G$ be a homogeneous $\dO{1}$-prebasis,  and $\J$ a border reduction structure. The following statements are equivalent:
	\begin{enumerate}[label=(\roman*)]
		\item \label{it:char_rewrite1}$G$ is a homogeneous $\dO{1}$-basis;
		\item \label{it:char_rewrite2} for each $d \geq 0$, it holds $\langle \TT{G}_d \rangle = (G)_d$;
		\item \label{it:char_rewrite3}it holds $\langle \TT{G} \rangle = (G)$.
	\end{enumerate}
\end{theorem}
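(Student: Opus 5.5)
The plan is to prove the cycle \ref{it:char_rewrite1}$\Rightarrow$\ref{it:char_rewrite2}$\Rightarrow$\ref{it:char_rewrite3}$\Rightarrow$\ref{it:char_rewrite1}. The engine throughout is the degreewise decomposition $P_d=\langle \TT{G}_d\rangle\oplus\langle\Oid_d\rangle$ of Proposition~\ref{prop: noeth, direct sum, confl}.\ref{prop: noeth, direct sum, confl 3}, together with the standing convention that the border is ordered increasingly by degree. We take $J=(G)$ in Definition~\ref{def: border basis}, as the paper does.

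\emph{\ref{it:char_rewrite1}$\Rightarrow$\ref{it:char_rewrite2}.} The inclusion $\langle\TT{G}_d\rangle\subseteq (G)_d$ is always true. For the reverse, take $f\in (G)_d$ and reduce it: by Corollary~\ref{cor: eq reduction} we can write $f=t+h$ with $t\in\langle\TT{G}_d\rangle$ and $h\in\langle\Oid_d\rangle$. Then $h=f-t\in (G)_d\cap\langle\Oid_d\rangle$, and this intersection is $\{0\}$ by Definition~\ref{def: border basis}.\ref{it:def border basis3}. Hence $f=t\in\langle\TT{G}_d\rangle$.

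\emph{\ref{it:char_rewrite2}$\Rightarrow$\ref{it:char_rewrite3}.} Since $(G)$ is a homogeneous ideal, $(G)=\bigoplus_d (G)_d$. Each homogeneous element of $\TT{G}$ lies in some $\TT{G}_d$, so $\langle\TT{G}\rangle=\sum_d\langle\TT{G}_d\rangle$. Summing the equalities $\langle\TT{G}_d\rangle=(G)_d$ over all $d$ gives $\langle\TT{G}\rangle=(G)$.

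\emph{\ref{it:char_rewrite3}$\Rightarrow$\ref{it:char_rewrite1}.} This is the only step requiring a small argument. We check Definition~\ref{def: border basis}.\ref{it:def border basis3}. Take $h\in (G)_d\cap\langle\Oid_d\rangle$. By \ref{it:char_rewrite3}, $h\in\langle\TT{G}\rangle\cap\langle\Oid\rangle$, and this intersection is $\{0\}$ by Proposition~\ref{prop: noeth, direct sum, confl}.\ref{prop: noeth, direct sum, confl 3}. So $h=0$, and $G$ is a homogeneous $\dO{1}$-basis of $(G)$.

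The main obstacle is simply keeping the bookkeeping straight: \ref{it:char_rewrite3} is a non-graded statement while \ref{it:char_rewrite1} is graded. The direct sum $P=\langle\TT{G}\rangle\oplus\langle\Oid\rangle$ avoids any need to homogenize representations by $\TT{G}$. If one preferred to go directly \ref{it:char_rewrite3}$\Rightarrow$\ref{it:char_rewrite2}, one would use that the homogeneous components of an element of $\langle\TT{G}\rangle$ again lie in $\langle\TT{G}\rangle$, because the elements of $\TT{G}$ are homogeneous.
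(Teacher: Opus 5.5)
Your proof is correct and is essentially the paper's argument. Everything rests on the splitting $P_d=\langle\TT{G}_d\rangle\oplus\langle\Oid_d\rangle$ of Proposition~\ref{prop: noeth, direct sum, confl}.\ref{prop: noeth, direct sum, confl 3}, and your (i)$\Rightarrow$(ii) is the paper's step; the only difference is that you close a cycle by getting (i) from (iii) via the ungraded splitting $P=\langle\TT{G}\rangle\oplus\langle\Oid\rangle$, whereas the paper derives (i) from (ii) with the graded splitting and treats (ii)$\Leftrightarrow$(iii) as immediate by homogeneity of $(G)$.
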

\begin{proof}
	\begin{enumerate}[label={}, align=left, labelsep=0.5em, leftmargin=*]
		\item[\ref{it:char_rewrite1}$\Rightarrow$\ref{it:char_rewrite2}:] Let us fix $d \geq 0$. It suffices to show that $(G)_d \subseteq \langle \TT{G}_d \rangle$. Let $f \in (G)_d \subseteq P_d$. We can uniquely write $f=q+h$, where $q \in \langle \TT{G}_d \rangle$ and $h \in \langle \Oid_d \rangle$ by Proposition~\ref{prop: noeth, direct sum, confl}. Then, in  $P_d/(G)_d$ we obtain $\overline{0} = \overline{f} = \overline{h}$, which means $h \in (G)_d$. Thus $h=0$ because $G$ is a basis by hypothesis, and so $f=q \in \langle \TT{G}_d \rangle$.
		\item[\ref{it:char_rewrite2}$\Rightarrow$\ref{it:char_rewrite1}:] By Proposition~\ref{prop: noeth, direct sum, confl}, for every $d\geq 0$ we have $P_d = \langle \TT{G}_d \rangle \oplus \langle \Oid_d \rangle$. By hypothesis, $\langle \TT{G}_d \rangle=(G)_d$ for every $d\geq 0$. Summing up these two facts, we obtain that $G$ fulfills Definition~\ref{def: border basis}.\ref{it:def border basis2}.
		\item[\ref{it:char_rewrite2}$\Leftrightarrow$\ref{it:char_rewrite3}:] This is immediate since $(G)$ is a homogeneous ideal. 
		\qedhere
	\end{enumerate}
\end{proof}

\section{Formal Multiplication Matrices}\label{sec matrices}

In this section we give a criterion for a border prebasis to be a border basis by  the commutativity of formal multiplication matrices. This characterization imitates the corresponding characterization for border bases \cite[Theorem~4.3.17]{KKR2005}, and the one in \cite{MourrainTreb}. However, in our homogenous setting we need to impose  the commutativity of two maps in consecutive degrees.
 
We consider the order ideal $\Oid = \left\{\tau_i\right\}_{i\in\N\setminus\{0\}}$ ordered according to ascending degree. For $d\in\N$, let $L(d)$ 
be the cardinality of $\Oid_{\leq d}$. Then $\Oid_{\leq d} = \left\{\tau_1,\dots,\tau_{L(d)}\right\}$ and $\Oid_d = \left\{\tau_{L(d-1)+1},\dots,\tau_{L(d)}\right\}$ (assuming $L(-1)=0$). With these notations, considering a homogeneous $\dO{1}$-prebasis $G$, the polynomials in $G_d$ can be written as 
\begin{equation}\label{eq:mpoly for multmatrix}
    g_\sigma = \sigma - \sum_{\ell=1}^{|\Oid_d|}c_{\sigma\tau_{L(d-1)+\ell}}\tau_{L(d-1)+\ell}.
\end{equation}
 
Since the residue classes of the elements of $\Oid_d$ generate $P_d/(G)_d$ as a $\K$-vector space (Corollary~\ref{cor prebasis generates}), we can describe the structure of this algebra by describing the effect of multiplying these generators by an indeterminate, generalizing \cite[Definition~15]{kehrein2005characterizations}. 

\begin{definition}\label{def: matrices}
	Let $G$ be a homogeneous $\dO{1}$-prebasis. Given $r \in \{0,\dots,n\}$ and $d\geq0$, we define the \emph{$r$-th $d$-graded formal multiplication matrix} $\Chi_r^{(d)} = (\xi_{k\,\ell}^{(r,d)}) \in \mat{|\Oid_{d+1}|,|\Oid_d|}{\K}$ of $G$ by
	\begin{equation}\label{eq:entries matrix}
	\xi_{k\,\ell}^{(r,d)} = 
	\begin{cases}
		\delta_{i\,k} & \text{if } x_r{\tau_{L(d-1)+\ell}} = {\tau_{L(d)+i}} \in \Oid \\
		c_{\sigma_j\tau_{L(d)+k}} & \text{if } x_r{\tau_{L(d-1)+\ell}} = {\sigma_j} \in \dO{1}
	\end{cases}
	\end{equation}
	where $\delta_{i\,k} = 1$ if $i = k$ and $\delta_{i\,k} = 0$ otherwise, and $c_{\sigma_j\tau_{L(d)+k}}$ is the coefficient of $\tau_{L(d)+k}$ in the polynomial $g_{\sigma_j}$  in the writing~\eqref{eq:mpoly for multmatrix}.
\end{definition}

Notice that all entries $\xi_{k\,\ell}^{(r,d)}$ of $\Chi_r^{(d)}$ are determined by the coefficients of the polynomials in $G_{d+1}$. 

\begin{remark}
Similarly to the interpretation of the formal multiplication matrices of a border prebasis in \cite[page~260]{kehrein2005characterizations}, the multiplication matrices of the homogeneous border prebasis $G$ can be costructed by the following procedure. Keeping in mind that the residue classes of the terms in $\Oid_d$ generate $P_{d}/(G)_d$, we multiply an
element of $\langle \Oid_d\rangle$ 
by the indeterminate $x_r$. Whenever $x_r{\tau_{L(d-1)+\ell}} = {\sigma_j}$ is a border term, we reduce it by the corresponding border polynomial $g_{\sigma_j}$ so that we represent it in $P_{d+1}/(G)_{d+1}$ by
 the residue classes of $\Oid_{d+1}$, that generate it. Hence, every element 
$v = c_1{\tau_{L(d-1)+1}} + \cdots + c_{|\Oid_d|}{\tau_{L(d)}} \in \langle\Oid_d\rangle$ is considered as a column vector $(c_1,\dots,c_{|\Oid_d|})^\mathrm{tr} \in \K^{|\Oid_d|}$, and $x_rv$ corresponds to $\Chi_r^{(d)}(c_1,\dots,c_{|\Oid_d|})^\mathrm{tr}$.
\end{remark}

\begin{example}\label{es: matrices 1}
	Consider the order ideal in Example~\ref{es: generators} ordered according to ascending degree in the following way:
	\begin{gather*}
		\Oid_0=\{{\tau_1}=1\}, \quad
        \Oid_1=\{{\tau_2}=x, \, {\tau_3}=y\}, \quad 
        \Oid_2=\{{\tau_4}=x^2, \, {\tau_5}=xy, \,{\tau_6}=y^2\}, \\ 
        \Oid_3=\{{\tau_7}=x^3, \, {\tau_8}=x^2y, \, {\tau_9}=xy^2, \, {\tau_{10}}=y^3\}, \quad
        \Oid_4=\{{\tau_{11}}=x^4, \, {\tau_{12}}=x^3y, \, {\tau_{13}}=x^2y^2\}, \\
        \Oid_d=\{{\tau_{2d+4}}=x^d, \, {\tau_{2d+5}}=x^{d-1}y\} \; \text{ for all } d\geq 5.
	\end{gather*}
	Let $G$ be the $\dO{1}$-prebasis of \eqref{eq:es border reductors}. In this case, for $d=3,4$  the formal multiplication matrices of $G$ are:
	{\allowdisplaybreaks\begin{gather*}
			\Chi^{(3)} = \begin{bmatrix}
            1	& 0	& 0	& 0 \\
            0	& 1	& 0	& 0 \\
            0	& 0	& 1	& 0 
	        \end{bmatrix}, \;
			\mathcal{Y}^{(3)} = \begin{bmatrix}
			0	& 0 & 0	& 0	\\
			1	& 0 & 0	& 0	\\
			0	& 1 & 0	& 0	
			\end{bmatrix}\; \in \mat{3,4}{\K}, \\
			\Chi^{(4)} = \begin{bmatrix}
			1	& 0	& -1 \\
			0	& 1	& 0 
			\end{bmatrix}, \;
			\mathcal{Y}^{(4)} = \begin{bmatrix}
			0	& -1	& -1 \\
			1	& 0		& 0 
			\end{bmatrix}\; \in \mat{2,3}{\K},
	\end{gather*}}
	where $\Chi^{(d)}$ (resp. $\mathcal{Y}^{(d)}$) is the formal multiplication matrix with respect to the variable $x$ (resp. $y$).
\end{example}

\begin{lemma}\label{lemma char comm mat}
Let $G$ be a homogeneous $\dO{1}$-prebasis, and $d_0\geq 2$. The following statements are equivalent:
	\begin{enumerate}[label=(\roman*)]
		\item\label{it_i_char comm mat} for every $0\leq d\leq d_0$, the residue classes of the elements of $\Oid_d$ are a $\K$-vector space basis of $P_d/(G)_d$;
		\item \label{it_ii_char comm mat} for every $0\leq d\leq d_0-2$, the graded formal multiplication matrices of $G$ satisfy the equality
		\[
		\Chi_r^{(d+1)} \Chi_s^{(d)} = \Chi_s^{(d+1)} \Chi_r^{(d)} \; \text{ for all } r,s \in \{0,\dots,n\}.
        \]
	\end{enumerate}
	In that case, for $0\leq d\leq d_0-1$, the $d$-graded formal multiplication matrices represent the multiplication homomorphisms $P_d/(G)_d \to P_{d+1}/(G)_{d+1}$ with respect to the bases $\{\overline{\tau} \mid \tau \in \Oid_d\}$ and $\{\overline{\tau} \mid \tau \in \Oid_{d+1}\}$.
\end{lemma}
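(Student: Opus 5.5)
The plan is to reinterpret the matrices as a reduction map. For $v\in\langle\Oid_d\rangle$, write $\Chi_r^{(d)}v$ for the element of $\langle\Oid_{d+1}\rangle$ whose coordinate vector is $\Chi_r^{(d)}$ applied to that of $v$. For every $\tau\in\Oid_d$, the term $x_r\tau$ is either in $\Oid_{d+1}$ or equal to some border term $\sigma_j$. In the second case we replace it by $x_r\tau-g_{\sigma_j}$, which is exactly the column given by \eqref{eq:entries matrix}. Hence
\[
x_rv-\Chi_r^{(d)}v\in\langle G_{d+1}\rangle\subseteq (G)_{d+1}\qquad\text{for all } v\in\langle\Oid_d\rangle .
\]
So, modulo $(G)$, the matrix $\Chi_r^{(d)}$ always computes multiplication by $x_r$ on the generators $\overline{\tau}$, $\tau\in\Oid_d$, of $P_d/(G)_d$ from Corollary~\ref{cor prebasis generates}.

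\textbf{(i)$\Rightarrow$(ii) and the final claim.} Assume (i). For $d\le d_0-1$, both $\{\overline\tau\mid\tau\in\Oid_d\}$ and $\{\overline\tau\mid\tau\in\Oid_{d+1}\}$ are bases. The congruence above then says that $\Chi_r^{(d)}$ is the matrix of multiplication by $x_r$ from $P_d/(G)_d$ to $P_{d+1}/(G)_{d+1}$, which is the final statement. For $d\le d_0-2$, both products $\Chi_r^{(d+1)}\Chi_s^{(d)}$ and $\Chi_s^{(d+1)}\Chi_r^{(d)}$ represent multiplication by $x_rx_s=x_sx_r$ from degree $d$ to degree $d+2\le d_0$. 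Since the representing matrix with respect to fixed bases is unique, the two products coincide.

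\textbf{(ii)$\Rightarrow$(i).} I would mimic the Kehrein--Kreuzer argument by building a truncated module. Let $V=\bigoplus_{d\le d_0}\langle\Oid_d\rangle$.

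1. For a term $t=x_{r_1}\cdots x_{r_e}$ and $v\in\langle\Oid_d\rangle$ with $d+e\le d_0$, set $t\cdot v=\Chi_{r_e}^{(d+e-1)}\cdots\Chi_{r_1}^{(d)}v$. Extend this linearly to $P_e\times\langle\Oid_d\rangle\to\langle\Oid_{d+e}\rangle$.

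2. The main obstacle is showing this is well defined, i.e. independent of the ordering of the variables in $t$. Any two orderings are related by adjacent transpositions. Swapping the factors in positions $k,k+1$ changes only the subproduct $\Chi_{r}^{(d')}\Chi_{s}^{(d'-1)}$ with $d'=d+k$. Here $d'-1\le d+e-2\le d_0-2$, so the relevant hypothesis of (ii) applies. Once this is settled, we get $t'\cdot(t\cdot v)=(t't)\cdot v$ whenever degrees stay $\le d_0$.

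3. Define $\Phi\colon P_{\le d_0}\to V$ by $\Phi(f)=f\cdot 1$, where $1=\tau_1\in\Oid_0$. By step 2, $\Phi(tf)=t\cdot\Phi(f)$ whenever $\deg(tf)\le d_0$.

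4. $\Phi(\tau)=\tau$ for every $\tau\in\Oid_{\le d_0}$. I would prove this by induction on the degree. Since $\Oid$ is an order ideal, we can write $\tau=x_r\tau'$ with $\tau'\in\Oid$, and the defining case of \eqref{eq:entries matrix} gives $\Chi_r\tau'=\tau$.

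5. $\Phi(g_\sigma)=0$ for every $\sigma\in\dO{1}$ of degree $\le d_0$. Write $\sigma=x_r\tau'$ with $\tau'\in\Oid$. Then $\Phi(\sigma)=\Chi_r\tau'=\sum_\tau c_{\sigma\tau}\tau=\Phi(\sigma-g_\sigma)$.

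6. Every element of $(G)_d$ with $d\le d_0$ is a sum of terms $t\,g_\sigma$ with $\deg(tg_\sigma)=d$. By steps 3 and 5, $(G)_d\subseteq\ker\Phi$.

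7. If $h\in(G)_d\cap\langle\Oid_d\rangle$, then by step 4 we get $h=\Phi(h)=0$. This is condition~\ref{it:def border basis3} of Definition~\ref{def: border basis} in degree $d$. Together with Corollary~\ref{cor prebasis generates}, it gives (i).
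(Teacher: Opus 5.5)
Your proposal is correct and follows essentially the same route as the paper. The paper's map $\tilde{\Theta}^{(d)}$ is your $\Phi$ restricted to $P_d$: it is made well defined by the commutativity in (ii), shown by induction to fix $\Oid_d$, and shown to vanish on $(G)_d$. The direction (i)$\Rightarrow$(ii) likewise identifies the matrices with multiplication by $x_r$, exactly as you do. The only cosmetic difference is at the end: you conclude $(G)_d\cap\langle\Oid_d\rangle=\{0\}$ directly from $\Phi|_{\langle\Oid_d\rangle}=\mathrm{id}$, whereas the paper compares $P_d/(G)_d$ with $P_d/\ker\tilde{\Theta}^{(d)}$ through a natural surjection.
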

\begin{proof}
	\begin{enumerate}[label={}, align=left, labelsep=0.5em, leftmargin=*]
		\item[\ref{it_i_char comm mat}$\Rightarrow$\ref{it_ii_char comm mat}:] Let us fix $0\leq d \leq d_0-2$. 
        Let $\Chi_0^{(d)},\dots,\Chi_n^{(d)}$ be the $d$-graded formal multiplication matrices of $G$, and let $\Chi_0^{(d+1)},\dots,\Chi_n^{(d+1)}$ be the $(d+1)$-graded ones. 
        By hypothesis, 
        the set $\{\overline{\tau} \mid \tau \in \Oid_d\} = \{\overline{{\tau_{L(d-1)+1}}},\dots,\overline{{\tau_{L(d)}}}\}$ is a $\K$-vector space basis of $P_d/(G)_d$, the set $\{\overline{\tau} \mid \tau \in \Oid_{d+1}\} = \{\overline{{\tau_{L(d)+1}}},\dots,\overline{\tau_{L(d+1)}}\}$ is a $\K$-vector space basis of $P_{d+1}/(G)_{d+1}$, and the set $\{\overline{\tau} \mid \tau \in \Oid_{d+2}\} = \{\overline{{\tau_{L(d+1)+1}}},\dots,\overline{\tau_{L(d+2)}}\}$ is a $\K$-vector space basis of $P_{d+2}/(G)_{d+2}$. 
        Therefore each matrix $\Chi_r^{(d)}$ defines a $\K$-linear map $\chi_r^{(d)} \colon P_d/(G)_d \to P_{d+1}/(G)_{d+1}$, i.e.
		\begin{align*}
			\chi_r^{(d)}(\overline{{\tau_{L(d-1)+1}}}) &= \xi_{1\,1}^{(r,d)}\,\overline{{\tau_{L(d)+1}}} + \cdots + \xi_{|\Oid_{d+1}|\,1}^{(r,d)}\,\overline{{\tau_{L(d+1)}}} \\
			&\;\;\vdots \\
			\chi_r^{(d)}(\overline{{\tau_{L(d)}}}) &= \xi_{1\,|\Oid_d|}^{(r,d)}\,\overline{{\tau_{L(d)+1}}} + \cdots + \xi_{|\Oid_{d+1}|\,|\Oid_d|}^{(r,d)}\,\overline{{\tau_{L(d+1)}}},
		\end{align*}
        and analogously each matrix $\Chi_r^{(d+1)}$ defines a $\K$-linear map $\chi_r^{(d+1)} \colon P_{d+1}/(G)_{d+1} \to P_{d+2}/(G)_{d+2}$. \\
        According to the definition of the entries $\xi_{k\,\ell}^{(r,d)}$ in~\eqref{eq:entries matrix}, only two cases occur: the product $x_r{\tau_{L(d-1)+\ell}} \in \T_{d+1}$ equals  either some term ${\tau_{L(d)+i}}$ in the order ideal $\Oid$ or some term ${\sigma_j}$ in the border $\dO{1}$. In the former case we have 
		\begin{align*}
			\chi_r^{(d)}(\overline{{\tau_{L(d-1)+\ell}}}) &= 0\,\overline{{\tau_{L(d)+1}}} + \cdots + 0\,\overline{{\tau_{L(d)+i-1}}} + 1\,\overline{{\tau_{L(d)+i}}} + 
            0\,\overline{{\tau_{L(d)+i+1}}} + \cdots + 0\,\overline{{\tau_{L(d+1)}}} = \\
			&= \overline{{\tau_{L(d)+i}}} = \overline{x_r{\tau_{L(d-1)+\ell}}},
		\end{align*}
		and in the latter case we have
        \[
        \chi_r^{(d)}(\overline{{\tau_{L(d-1)+\ell}}})=
        c_{\sigma_j\tau_{L(d)+1}}\overline{{\tau_{L(d)+1}}} + \cdots + c_{\sigma_j\tau_{L(d+1)}}\overline{{\tau_{L(d+1)}}} = 
        \overline{{\sigma_j}} = \overline{x_r{\tau_{L(d-1)+\ell}}}.
        \]
		From this it follows that the homomorphism $\chi_r^{(d)} \colon P_d/(G)_d \to P_{d+1}/(G)_{d+1}$ is multiplication by $x_r$, and similarly, the homomorphism $\chi_r^{(d+1)} \colon P_{d+1}/(G)_{d+1} \to P_{d+2}/(G)_{d+2}$ is multiplication by $x_r$. 
        
		Now, let $r,s \in \{0,\dots,n\}$. The matrices $\Chi_r^{(d+1)}\Chi_s^{(d)}$ and $\Chi_s^{(d+1)}\Chi_r^{(d)}$ represent the map compositions $\chi_r^{(d+1)}\circ\chi_s^{(d)}$ and $\chi_s^{(d+1)}\circ\chi_r^{(d)}$, respectively. For any basis element $\overline{{\tau_{L(d-1)+\ell}}} \in P_d/(G)_d$ we have
		\begin{align*}
			\chi_r^{(d+1)}\big(\chi_s^{(d)}(\overline{{\tau_{L(d-1)+\ell}}})\big) &= \chi_r^{(d+1)}(\overline{x_s{\tau_{L(d-1)+\ell}}}) = \chi_r^{(d+1)}\left(\sum_{k=1}^{|\Oid_{d+1}|} \xi_{k\,\ell}^{(s,d)}\overline{{\tau_{L(d)+k}}}\,\right) = \\
			&= \sum_{k=1}^{|\Oid_{d+1}|}\xi_{k\,\ell}^{(s,d)}\chi_r^{(d+1)}(\overline{{\tau_{L(d)+k}}}) = \sum_{k=1}^{|\Oid_{d+1}|}\xi_{k\,\ell}^{(s,d)}\overline{x_r{\tau_{L(d)+k}}} = \\
			&= \overline{x_rx_s{\tau_{L(d-1)+\ell}}},
		\end{align*}
		and analogously
		\[
		\chi_s^{(d+1)}\big(\chi_r^{(d)}(\overline{{\tau_{L(d-1)+\ell}}})\big) = \overline{x_sx_r{\tau_{L(d-1)+\ell}}}.
		\]
		Since the indeterminates commute, i.e. $x_rx_s=x_sx_r$, the two results coincide. Hence, by $\K$-linearity, $\chi_r^{(d+1)}\circ\chi_s^{(d)} = \chi_s^{(d+1)}\circ\chi_r^{(d)}$ and therefore $\Chi_r^{(d+1)}\Chi_s^{(d)} = \Chi_s^{(d+1)}\Chi_r^{(d)}$, for all $r,s \in \{0,\dots,n\}$.
		\item[\ref{it_ii_char comm mat}$\Rightarrow$\ref{it_i_char comm mat}:] For any $0\leq d\leq d_0$, we define a $\K$-linear map $\tilde{\Theta}^{(d)} \colon P_d \to \langle\Oid_d\rangle \colon f \mapsto \tilde{\Theta}^{(d)}(f)$ by setting
		\[
		\tilde{\Theta}^{(d)} (x_0^{\alpha_0} \cdots x_n^{\alpha_n}) = ({\tau_{L(d-1)+1}},\dots,{\tau_{L(d)}}) \prod_{k=0}^{n}\Chi_k^{(d-\sum_{i=0}^{k-1}\alpha_i-1)}\cdots\Chi_k^{(d-\sum_{i=0}^{k-1}\alpha_i-\alpha_k)}
		\]
		for each $x_0^{\alpha_0} \cdots x_n^{\alpha_n} \in \T_d$ when $d \geq 1$, and $\tilde{\Theta}^{(0)}(f) = f$. \\
		The map $\tilde{\Theta}^{(d)}$ is well defined. 
        Indeed, the matrix product $\prod_{k=0}^{n}\prod_{j=1}^{\alpha_k}\Chi_k^{(d-\sum_{i=0}^{k-1}\alpha_i-j)}$ lies in $\mat{|\Oid_d|,|\Oid_0|}{\K} = \mat{|\Oid_d|,1}{\K}$, and hence it can be multiplied with the row vector $({\tau_{L(d-1)+1}},\dots,{\tau_{L(d)}})$. 
        Moreover, the result does not depend on the ordering of the factors in the product $x_0^{\alpha_0} \cdots x_n^{\alpha_n}$, since by assumption $\Chi_r^{(d+1)} \Chi_s^{(d)} = \Chi_s^{(d+1)} \Chi_r^{(d)}$ for all $r,s \in \{0,\dots,n\}$ and all $0\leq d\leq d_0-2$. \\
		Furthermore, $\tilde{\Theta}^{(d)}$ is surjective: it suffices to show that the basis elements ${\tau_{L(d-1)+1}}, \dots, \allowbreak {\tau_{L(d)}}$ lie in its image. 
        To do so, we proceed by induction on the degree $d$ and prove that, for each $\tau \in \Oid_d$, it holds $\tilde{\Theta}^{(d)}(\tau) = \tau$. 
        Let $\mathcal{E}_\ell^{(d)}$ denote the matrix of size $|\Oid_d|\times1$ whose $\ell$-th entry is~$1$ and whose other entries are~$0$. 
        In the base case we have $\tilde{\Theta}^{(0)}({\tau_1}) = \tilde{\Theta}^{(0)}(1) = 1 = {\tau_1}$. 
        For the induction step, let ${\tau_{L(d-1)+i}} = x_r{\tau_{L(d-2)+\ell}} \in \Oid_{d}$ and assume by the inductive hypothesis that $\tilde{\Theta}^{(d-1)}({\tau_{L(d-2)+\ell}}) = {\tau_{L(d-2)+\ell}}$. 
        Then if ${\tau_{L(d-2)+\ell}} = x_0^{\alpha_0} \cdots x_n^{\alpha_n}$, by definition of $\tilde{\Theta}^{(d-1)}$ we have $\prod_{k=0}^{n}\prod_{j=1}^{\alpha_k}\Chi_k^{(d-1-\sum_{i=0}^{k-1}\alpha_i-j)} = \mathcal{E}_\ell^{(d-1)}$. Therefore we obtain
		\[
		\Chi_r^{(d-1)}\prod_{k=0}^{n}\prod_{j=1}^{\alpha_k}\Chi_k^{(d-1-\sum_{i=0}^{k-1}\alpha_i-j)} = \Chi_r^{(d-1)}\mathcal{E}_\ell^{(d-1)} = (\xi_{1\,\ell}^{(r,d-1)},\dots,\xi_{|\Oid_{d}|\,\ell}^{(r,d-1)})^\mathrm{tr} = \mathcal{E}_i^{(d)}
		\]
		by definition of $\Chi_r^{(d-1)}$ and ${\tau_{L(d-1)+i}}$, and thus
		\[
		\tilde{\Theta}^{(d)}({\tau_{L(d-1)+i}}) = ({\tau_{L(d-1)+1}},\dots,{\tau_{L(d)}})\,\Chi_r^{(d-1)}\prod_{k=0}^{n}\prod_{j=1}^{\alpha_k}\Chi_k^{(d-1-\sum_{i=0}^{k-1}\alpha_i-j)} = {\tau_{L(d-1)+i}}.
		\]
		
        From this we obtain an induced isomorphism of $\K$-vector spaces $\Theta^{(d)} \colon P_d/\ker\tilde{\Theta}^{(d)} \to \langle\Oid_d\rangle$. In particular, the residue classes ${\tau_{L(d-1)+1}}+\ker\tilde{\Theta}^{(d)}, \dots, {\tau_{L(d)}}+\ker\tilde{\Theta}^{(d)}$ are $\K$-linearly independent. \\
		Next we show that $(G)_d \subseteq \ker\tilde{\Theta}^{(d)}$. Let $\sigma = x_r{\tau_{L(d'-2)+s}} \in \dO{1}_{d'}$, with ${\tau_{L(d'-2)+s}} = x_0^{\alpha_0} \cdots x_n^{\alpha_n} \in \Oid_{d'-1}$, and $f = \sum_{m=1}^{u} c_mx_0^{\alpha'_{m0}} \cdots x_n^{\alpha'_{mn}} \in P_{d-d'}$ be such that $\deg(fg_\sigma) = d$. Define ${\tau_{L(d'-1)+\ell}} = x_0^{\alpha_{\ell 0}} \cdots x_n^{\alpha_{\ell n}}$ for every $\ell = 1,\dots,|\Oid_{d'}|$. Then we have
		\begin{align}\label{eq:proof comm char mat}
			\tilde{\Theta}^{(d)}(fg_\sigma) &= ({\tau_{L(d-1)+1}},\dots,{\tau_{L(d)}}) \sum_{m=1}^{u} c_m\prod_{k=0}^{n}\prod_{j=1}^{\alpha'_{mk}}\Chi_k^{(d-\sum_{i=0}^{k-1}\alpha'_{mi}-j)}\cdot \\ \nonumber
			&\quad \cdot\left( \Chi_r^{(d'-1)}\prod_{k=0}^{n}\prod_{j=1}^{\alpha_k}\Chi_k^{(d'-1-\sum_{i=0}^{k-1}\alpha_i-j)} 
            - \sum_{\ell=1}^{|\Oid_{d'}|}c_{\sigma\tau_{L(d'-1)+\ell}}\prod_{k=0}^{n}\prod_{j=1}^{\alpha_{\ell k}}\Chi_k^{(d'-\sum_{i=0}^{k-1}\alpha_{ik}-j)} \right)
		\end{align}
		and it is therefore sufficient to show that the part in the parentheses equals zero. As shown above, we have $\tilde{\Theta}^{(d'-1)}({\tau_{L(d'-2)+s}}) = {\tau_{L(d'-2)+s}}$ and thus $\prod_{k=0}^{n}\prod_{j=1}^{\alpha_k}\Chi_k^{(d'-1-\sum_{i=0}^{k-1}\alpha_i-j)} \allowbreak = \mathcal{E}_s^{(d'-1)}$, and $\tilde{\Theta}^{(d')}({\tau_{L(d'-1)+\ell}}) = {\tau_{L(d'-1)+\ell}}$ and thus $\prod_{k=0}^{n}\prod_{j=0}^{\alpha_{\ell k}}\Chi_k^{(d'-\sum_{i=0}^{k-1}\alpha_{ik}-j)} = \mathcal{E}_\ell^{(d')}$. It follows that
			\[\Chi_r^{(d'-1)}\prod_{k=0}^{n}\prod_{j=1}^{\alpha_k}\Chi_k^{(d'-1-\sum_{i=0}^{k-1}\alpha_i-j)} 
			= \Chi_r^{(d'-1)}\mathcal{E}_s^{(d'-1)} = (c_{\sigma\tau_{L(d'-1)+1}},\dots,c_{\sigma\tau_{L(d')}})^\mathrm{tr}\]
		and
			\[\sum_{\ell=1}^{|\Oid_{d'}|}c_{\sigma\tau_{L(d'-1)+\ell}}\prod_{k=0}^{n}\prod_{j=1}^{\alpha_{\ell k}}\Chi_k^{(d'-\sum_{i=0}^{k-1}\alpha_{ik}-j)}
			= \sum_{\ell=1}^{|\Oid_{d'}|}c_{\sigma\tau_{L(d'-1)+\ell}}\mathcal{E}_\ell^{(d')} = (c_{\sigma\tau_{L(d'-1)+1}},\dots,c_{\sigma\tau_{L(d')}})^\mathrm{tr}.\]
		Since these two expressions coincide, the part of \eqref{eq:proof comm char mat} in the parentheses vanishes. Therefore we obtain $fg_\sigma \in \ker\tilde{\Theta}^{(d)}$ for every $g_\sigma \in G, f \in P$ such that $\deg(fg_\sigma) = d$, and thus $(G)_d \subseteq \ker\tilde{\Theta}^{(d)}$, as desired. \\
		Hence, there is a natural surjective homomorphism of $\K$-vector spaces $\Psi \colon P_d/(G)_d \to P_d/\ker\tilde{\Theta}^{(d)}$. Since the set $\{{\tau_{L(d-1)+1}}+(G)_d, \dots, {\tau_{L(d)}}+(G)_d\}$ generates the $\K$-vector space $P_d/(G)_d$, and since the set $\{{\tau_{L(d-1)+1}}+\ker\tilde{\Theta}^{(d)}, \dots, {\tau_{L(d)}}+\ker\tilde{\Theta}^{(d)}\}$ is $\K$-linearly independent, both sets must be bases and $(G)_d = \ker\tilde{\Theta}^{(d)}$. 
        This shows that 
        the residue classes of the elements of $\Oid_d$ are a $\K$-vector space basis of $P_d/(G)_d$.
		\qedhere
	\end{enumerate}
\end{proof}

Thanks to Lemma~\ref{lemma char comm mat}, we obtain the following, which generalizes the well-known characterization for classical border bases \cite[Proposition~16]{kehrein2005characterizations}.
\begin{theorem}[\textbf{Characterization by Formal Multiplication Matrices}]\label{char comm mat}
	Let $G$ be a homogeneous $\dO{1}$-prebasis. The following statements are equivalent:
	\begin{enumerate}[label=(\roman*)]
		\item $G$ is a homogeneous $\dO{1}$-basis;
		\item \label{it_ii_char comm mat bis} for every $d\geq0$, the graded formal multiplication matrices of $G$ satisfy the equality
		\[
		\Chi_r^{(d+1)} \Chi_s^{(d)} = \Chi_s^{(d+1)} \Chi_r^{(d)}
		\]
		for all $r,s \in \{0,\dots,n\}$.
	\end{enumerate}
\end{theorem}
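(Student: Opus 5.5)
The plan is to deduce the theorem directly from Lemma~\ref{lemma char comm mat}, applied with larger and larger values of $d_0$. Both conditions in the theorem hold "for every $d\geq 0$", while the lemma gives the equivalence on finite ranges of degrees. Taking the union over all $d_0\geq 2$ should therefore give the full equivalence.

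For (i)$\Rightarrow$(ii), assume $G$ is a homogeneous $\dO{1}$-basis. By Definition~\ref{def: border basis}, applied with $J=(G)$ (legitimate by Proposition~\ref{prop: G generates J}), the residue classes of $\Oid_d$ form a $\K$-basis of $P_d/(G)_d$ for every $d\geq 0$. Fix $d\geq 0$ and set $d_0=d+2\geq 2$. Condition~\ref{it_i_char comm mat} of Lemma~\ref{lemma char comm mat} then holds for this $d_0$, so condition~\ref{it_ii_char comm mat} holds as well. In particular
\[
\Chi_r^{(d+1)}\Chi_s^{(d)}=\Chi_s^{(d+1)}\Chi_r^{(d)} \quad \text{for all } r,s.
\]
Since $d$ was arbitrary, (ii) follows.

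For (ii)$\Rightarrow$(i), assume the commutation relations hold in every degree. Given any $d\geq 0$, choose $d_0=\max\{d,2\}$. The hypotheses of Lemma~\ref{lemma char comm mat}\ref{it_ii_char comm mat} are met for all degrees up to $d_0-2$, so the residue classes of $\Oid_{d'}$ form a basis of $P_{d'}/(G)_{d'}$ for all $d'\leq d_0$, and in particular for $d'=d$. Hence condition (1) of Definition~\ref{def: border basis} holds with $J=(G)$, which is a homogeneous ideal containing $G$. So $G$ is a homogeneous $\dO{1}$-basis of $(G)$.

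I do not expect a real obstacle: the substance is all in Lemma~\ref{lemma char comm mat}. The only point needing care is bookkeeping. One has to check that the lemma's requirement $d_0\geq 2$ does not leave out the degrees $0$ and $1$, which it does not since every $d$ is covered by some admissible $d_0$. One also has to note that the basis property in each fixed degree depends only on finitely many commutation relations, which is exactly what makes the passage from the finite-range lemma to all degrees legitimate.
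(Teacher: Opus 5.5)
Your proof is correct and is essentially the paper's argument: both reduce the theorem to Lemma~\ref{lemma char comm mat}, applied with arbitrarily large $d_0$. The paper routes the equivalence through condition~\ref{it:char_rewrite2} of Theorem~\ref{char_rewrite_relations} ($\langle \TT{G}_d\rangle=(G)_d$ for all $d$), whereas you go straight to condition (1) of Definition~\ref{def: border basis} with $J=(G)$; this is slightly more direct, and you make the $d_0$-bookkeeping explicit where the paper calls it immediate.
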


\begin{proof} By Theorem~\ref{char_rewrite_relations}, it is enough to prove that 
 item~\ref{it_ii_char comm mat bis} is equivalent to item~\ref{it:char_rewrite2} of Theorem~\ref{char_rewrite_relations}. By Lemma~\ref{lemma char comm mat}, this is immediate.
\end{proof}

\begin{example}\label{es: matrices 2}
	Recall Example~\ref{es: matrices 1}. The condition $\Chi^{(4)}\mathcal{Y}^{(3)}=\mathcal{Y}^{(4)}\Chi^{(3)}$ between the $3$- and $4$-graded formal multiplication matrices of $G$ does not hold:
	\[
	\Chi^{(4)}\mathcal{Y}^{(3)} = 
	\begin{bmatrix}
		0 & -1 & 0 & 0 \\
		1 & 0 & 0 & 0 
	\end{bmatrix}
	\neq 
	\begin{bmatrix}
		0 & -1 & -1 & 0 \\
		1 & 0 & 0 & 0 
	\end{bmatrix}
	= \mathcal{Y}^{(4)}\Chi^{(3)}.
	\]
	Hence $G$ is not a border basis, as already established in Example~\ref{es: generators}.
\end{example}

In order to verify whether $G$ is a border basis, it is enough to check the conditions in Theorem~\ref{char comm mat} starting from a sufficiently large degree $d$. In fact, for small degrees the graded multiplication matrices are determined uniquely by the order ideal $\Oid$, and do not depend on the choice of $G$.

If $S$ is a set of homogeneous polynomials, we denote $\mindeg{S}$ the minimum of the degrees of the polynomials in $S$.
\begin{corollary}\label{cor: comm mat}
	Let $G$ be a homogeneous $\dO{1}$-prebasis. The following statements are equivalent:
	\begin{enumerate}[label=(\roman*)]
		\item $G$ is a homogeneous $\dO{1}$-basis;
		\item for every $d\geq\mindeg{\dO{1}}-1$, the graded formal multiplication matrices of $G$ satisfy 
        \[\Chi_r^{(d+1)} \Chi_s^{(d)} = \Chi_s^{(d+1)} \Chi_r^{(d)} \; \text{ for all } 0 \leq r < s \leq n.
        \]
	\end{enumerate}
\end{corollary}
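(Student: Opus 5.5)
By Theorem~\ref{char comm mat}, it suffices to show that the commutation relations $\Chi_r^{(d+1)} \Chi_s^{(d)} = \Chi_s^{(d+1)} \Chi_r^{(d)}$ hold automatically for $0\leq d< m-1$, where $m\coloneqq\mindeg{\dO{1}}$. The restriction to $r<s$ is harmless: the relation for $r=s$ is trivial, and the relation for $(s,r)$ is the same equality as for $(r,s)$ with the two sides swapped.

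\textbf{Main step.} Since $\Oid$ is an order ideal and $\dO{1}_t=\emptyset$ for $t<m$, every term of degree $t<m$ lies in $\Oid$. Indeed, if some $\tau\in\T_t\setminus\Oid$, then by Proposition~\ref{prop: basic prop borders}.\ref{prop: basic prop borders 3} it would be divisible by a border term of degree $\le t<m$, which is impossible. Hence $\Oid_t=\T_t$ for all $t\le m-1$.

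Now fix $d\le m-2$, so that $d+1\le m-1$ and $d+2\le m$. For $\tau\in\Oid_d$, the product $x_r\tau$ has degree $d+1<m$, so it lies in $\Oid_{d+1}$. Therefore $\Chi_r^{(d)}$ is the $0/1$ matrix sending the basis vector of $\tau$ to that of $x_r\tau$; this is just multiplication by $x_r$ on terms $\T_d\to\T_{d+1}$.

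The matrix $\Chi_r^{(d+1)}$ is defined on $\Oid_{d+1}=\T_{d+1}$ with values in $\langle\Oid_{d+2}\rangle$. Its column for $\tau'\in\T_{d+1}$ is either the unit vector of $x_r\tau'$, if $x_r\tau'\in\Oid$, or the tail coefficients of $g_{x_r\tau'}$, if $x_r\tau'\in\dO{1}$. In both cases this column is the coordinate vector of the $(\dO{1})$-reduced form of the term $x_r\tau'$, which depends only on $x_r\tau'$. (Note that $x_r\tau'$ cannot lie in $\dO{k}$ for $k\ge2$, since $\tau'\in\Oid$.)

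\textbf{Conclusion.} The $\tau$-column of $\Chi_r^{(d+1)}\Chi_s^{(d)}$ is the column of $\Chi_r^{(d+1)}$ at $x_s\tau$, namely the vector attached to $x_rx_s\tau$. By symmetry, the $\tau$-column of $\Chi_s^{(d+1)}\Chi_r^{(d)}$ is the vector attached to $x_sx_r\tau$. These are the same term, so the two columns are the same vector, and the relations hold for every $d\le m-2$. Combined with the assumed relations for $d\ge m-1$, all the relations of Theorem~\ref{char comm mat} hold, which gives (ii)$\Rightarrow$(i). The implication (i)$\Rightarrow$(ii) is immediate from Theorem~\ref{char comm mat}.

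The main point requiring care is the observation that a column of $\Chi_r^{(d+1)}$ depends only on the product term $x_r\tau'$ and not on the pair $(r,\tau')$. This holds because the prebasis contains exactly one polynomial $g_\sigma$ for each border term $\sigma$.
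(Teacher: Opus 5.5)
Your proof is correct and follows essentially the same route as the paper: reduce to Theorem~\ref{char comm mat} and check, column by column, that the relations hold automatically for $d<\mindeg{\dO{1}}-1$. The paper splits this into the two cases $x_rx_s\tau\in\Oid$ and $x_rx_s\tau\in\dO{1}$ (the latter occurring only for $d=\mindeg{\dO{1}}-2$), whereas your remark that each column of $\Chi_r^{(d+1)}$ depends only on the product term $x_r\tau'$ covers both cases at once; you also justify explicitly why $x_r\tau, x_s\tau\in\Oid$, which the paper leaves implicit.
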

\begin{proof}
	It is enough to prove that for degrees $d < \mindeg{\dO{1}} - 1$ the condition $\Chi_r^{(d+1)}\Chi_s^{(d)}\mathcal{E}_\ell^{(d)} = \Chi_s^{(d+1)}\Chi_r^{(d)}\mathcal{E}_\ell^{(d)}$ is automatically satisfied for every $\ell = 1,\dots,|\Oid_d|$. \\
	Let us fix $0 \leq d < \mindeg{\dO{1}} - 1$, and fix ${\tau_{L(d-1)+\ell}} \in \Oid_d$. Two cases can occur:
	\begin{enumerate}[label={}, align=left, labelsep=0.5em, leftmargin=*]
		\item[Case 1:] 
		\begin{tabular}{@{}c l@{}}
			\renewcommand{\arraystretch}{1.1} 
			\begin{tabular}{|c|c|}
				\hline
				${\tau_{L(d)+j}}$ & ${\tau_{L(d+1)+k}}$ \\ \hline
				${\tau_{L(d-1)+\ell}}$ & ${\tau_{L(d)+i}}$ \\ \hline
			\end{tabular}
			&
			\parbox{0.65\textwidth}{
				$x_rx_s{\tau_{L(d-1)+\ell}} = {\tau_{L(d+1)+k}} \in \Oid$ and \\
                $x_r{\tau_{L(d-1)+\ell}} = {\tau_{L(d)+i}} \in \Oid, \, x_s{\tau_{L(d-1)+\ell}} = {\tau_{L(d)+j}} \in \Oid$. }
		\end{tabular} \\
		Then we have
        \[\Chi_r^{(d+1)}\Chi_s^{(d)}\mathcal{E}_\ell^{(d)} = \Chi_r^{(d+1)}\mathcal{E}_j^{(d+1)} = \mathcal{E}_k^{(d+2)} = \Chi_s^{(d+1)}\mathcal{E}_i^{(d+1)} = \Chi_s^{(d+1)}\Chi_r^{(d)}\mathcal{E}_\ell^{(d)},\]
        i.e. the desired condition holds by the definition of the formal multiplication matrices.
		\item[Case 2:] 
		\begin{tabular}{@{}c l@{}}
			\renewcommand{\arraystretch}{1.1} 
			\begin{tabular}{|c|c|}
				\hline
				${\tau_{L(d)+j}}$ & ${\sigma_k}$ \\ \hline
				${\tau_{L(d-1)+\ell}}$ & ${\tau_{L(d)+i}}$ \\ \hline
			\end{tabular}
			&
			\parbox{0.65\textwidth}{
				$x_rx_s{\tau_{L(d-1)+\ell}} = {\sigma_k} \in \dO{1}$ and \\ $x_r{\tau_{L(d-1)+\ell}} = {\tau_{L(d)+i}} \in \Oid, \, x_s{\tau_{L(d-1)+\ell}} = {\tau_{L(d)+j}} \in \Oid$. }
		\end{tabular} \\
		Note that in this case $d = \mindeg{\dO{1}} - 2$. 
		We have 
        \[\Chi_r^{(d+1)}\Chi_s^{(d)}\mathcal{E}_\ell^{(d)} = \Chi_r^{(d+1)}\mathcal{E}_j^{(d+1)} = (c_{\sigma_k\tau_{L(d+1)+1}},\dots,c_{\sigma_k\tau_{L(d+2)}})^\mathrm{tr} = \Chi_s^{(d+1)}\mathcal{E}_i^{(d+1)}  = \Chi_s^{(d+1)}\Chi_r^{(d)}\mathcal{E}_\ell^{(d)}.\] Again, the desired condition follows immediately from the definition of the formal multiplication matrices.
	\end{enumerate}
	We conclude that the condition $\Chi_r^{(d+1)}\Chi_s^{(d)} = \Chi_s^{(d+1)}\Chi_r^{(d)}$ is always satisfied for any $d < \mindeg{\dO{1}}-1$.
\end{proof}

We now present an example in which a prebasis is indeed a border basis.
\begin{example}\label{es: basis}
	Consider the order ideal $\Oid$ of Example~\ref{es: generators} ordered as in Example~\ref{es: matrices 1}, and consider another homogeneous $\dO{1}$-prebasis $G'$:
	\begin{gather*}
		g'_{y^4} = y^4 + x^3y, \quad
		g'_{xy^3} = xy^3 + x^4, \quad
		g'_{x^2y^3} = x^2y^3 + x^5, \\
		g'_{x^3y^2} = x^3y^2 + x^5 - x^4y, \quad
		g'_{x^ky^2} = x^ky^2 - x^{k+1}y + x^{k+2} \; \text{ for } k \geq 4.
	\end{gather*}
	Then, with the polynomials above, for $d\geq\mindeg{\dO{1}}-1 = 3$, we obtain the following $d$-graded formal multiplication matrices:
	{\allowdisplaybreaks\begin{gather*}
			\Chi^{(3)} = \begin{bmatrix}
				1 & 0 & 0 & -1 \\
				0 & 1 & 0 & 0 \\
				0 & 0 & 1 & 0 
			\end{bmatrix}, \;
			\mathcal{Y}^{(3)} = \begin{bmatrix}
				0 & 0 & -1 & 0 \\
				1 & 0 & 0 & -1 \\
				0 & 1 & 0 & 0 
			\end{bmatrix}\; \in \mat{3,4}{\K}; \\
			\Chi^{(4)} = \begin{bmatrix}
				1 & 0 & -1 \\
				0 & 1 & 1 
			\end{bmatrix}, \;
			\mathcal{Y}^{(4)} = \begin{bmatrix}
				0 & -1 & -1 \\
				1 & 1 & 0 
			\end{bmatrix}\; \in \mat{2,3}{\K}; \\
			\Chi^{(\ell)} = \begin{bmatrix}
				1 & 0 \\
				0 & 1 
			\end{bmatrix}, \;
			\mathcal{Y}^{(\ell)} = \begin{bmatrix}
				0 & -1 \\
				1 & 1 
			\end{bmatrix}\; \in \mat{2,2}{\K}  \quad \text{ for } \ell \geq 5.
	\end{gather*}}
	In this case, we have
	{\allowdisplaybreaks\begin{gather*}
			\Chi^{(4)}\mathcal{Y}^{(3)} = \begin{bmatrix}
				0 & -1 & -1 & 0 \\
				1 & 1 & 0 & -1 
			\end{bmatrix} = \mathcal{Y}^{(4)}\Chi^{(3)}, \\
			\Chi^{(5)}\mathcal{Y}^{(4)} = \begin{bmatrix}
				0 & -1 & -1 \\
				1 & 1 & 0 
			\end{bmatrix} = \mathcal{Y}^{(5)}\Chi^{(4)},
	\end{gather*}}
	and, for every $\ell \geq 5$, the relation $\Chi^{(\ell+1)}\mathcal{Y}^{(\ell)} = \mathcal{Y}^{(\ell+1)}\Chi^{(\ell)}$ is always satisfied. 
	Therefore, by Corollary~\ref{cor: comm mat}, $G'$ is a homogeneous $\dO{1}$-basis.
\end{example}

\section{Finite Number of Conditions for Border Bases}\label{sec finite conditions}

The characterization of border bases given in Theorem~\ref{char comm mat} lacks of effectiveness, since item~\ref{it_ii_char comm mat bis} has to be checked for all $d\geq 0$.

In Example~\ref{es: basis}, for $d>4$ the formal matrices always commute, since one of them is the identity matrix. In general, this is not the case. Indeed, if we consider ideals with Krull dimension $>1$, for $d\gg 0$ the formal matrices we construct are not square.

\begin{example}
In the polynomial ring $P=\K[x,y,z]$, with $\mathrm{char}(\K)=0$, consider the order ideal $\Oid=\mathbb T(x)\cup\mathbb{T}(y,z)$, whose border is $\dO{1} = (y\cdot\T(x)\setminus\{y\}) \cup (z\cdot\T(x)\setminus\{z\}) \cup (x\cdot\T(y,z)\setminus\{x\})$
. For every $d\geq 1$, the formal multiplication matrices have $\vert \Oid_{d+1}\vert = d+3$ rows and $\vert \Oid_d\vert = d+2$ columns, hence they are never square matrices.

\end{example}

Actually, since we deal with homogeneous ideals in the  polynomial ring $P$ which is Noetherian, it is quite simple to prove that it is sufficient to check item~\ref{it_ii_char comm mat bis} of Theorem~\ref{char comm mat} for a finite number of values~$d$.

We recall that for every homogeneous ideal $J\subseteq P$, the \emph{Hilbert function} of $P/J$ is
$H_{P/J}\colon \mathbb{Z} \to \mathbb{Z}$ such that $H_{P/J}(t)$ is the dimension of $P_t/J_t$ as a $\K$-vector space. 

\begin{definition}\cite[Chapter~4, page~55]{GeomSyz}
The \emph{(Castelnuovo-Mumford) regularity} of a homogeneous ideal
$J\subseteq P$, denoted $\reg{J}$, is the minimum $m$ such that $J$ is generated in degree $\leq m$, and the $k$-th module of syzygies of $J$ is generated in degree $\leq m+k$ for $k=1,\dots,n+1$. We say that $J$ is \emph{$d$-regular} if $\reg{J}\leq d$.
\end{definition}

\begin{definition}\cite[Definition~3.2]{Green}
Given $a, d \in \N$, the \emph{$d$-th Macaulay representation} of $a$ is the unique expression
\[a ={k_d\choose d}+{k_{d-1}\choose d-1}+\cdots  +{k_\delta\choose \delta},\]
with $\delta \in \Z$, satisfying $k_d >\cdots> k_\delta \geq \delta> 0$ (by convention ${a \choose b}=0$ if $a<b$). Given this representation, the \emph{$d$-th Macaulay transformation} of $a$ is 
\[a^{\langle d \rangle}={k_d + 1\choose d + 1}+{k_{d-1} + 1\choose d}+\cdots+{k_\delta + 1\choose \delta + 1}.\]
\end{definition}

The notion of regularity of an ideal $J$ and the Macaulay representation and transformation of $H_{P/J}(t)$, for some suitable $t$, are related by the following theorem. We give its statement as in \cite[Theorem 3.10]{MourrainTreb} and \cite[Corollary 4.2.14, Theorem 4.3.3]{BH}, but see also \cite{Gotz}, \cite[Theorems~3.8 and~3.11]{Green} for the classical statements concerning saturated ideals. 
\begin{theorem}\label{thms gotz}
Let $J$ be a homogeneous ideal in $P$. Set $h_i=H_{P/J}(i)$ for $i\geq 0$
. Let $t$ be such that $h_{t+1}=h_t^{\langle t \rangle}$ and assume that $J$ is generated in degrees $\leq t+1$. Then:
\begin{enumerate}[label=(\roman*)]
\item \label{thm Gotz pers} (Gotzmann’s Persistence Theorem) $h_{i+1}=h_i^{\langle i \rangle}$ for every $i\geq t$;
\item  \label{thm Gotz reg}(Gotzmann's Regularity Theorem) $\reg{J}\leq t$.
\end{enumerate}
\end{theorem}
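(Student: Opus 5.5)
Since this theorem is classical and is only quoted here, the plan is to reconstruct the standard argument. Everything goes through lexicographic (lex-segment) ideals and the comparison between a generic hyperplane section and Macaulay's bound.

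\emph{Preliminary reduction.} By Macaulay's theorem, $h_{t+1}\leq h_t^{\langle t\rangle}$ always holds, with equality exactly when the lex-segment ideal $L$ having the same Hilbert function as $P/J$ has no minimal generators in degree $t+1$. I will use this in two ways.
\begin{itemize}
\item Dimension count: $\dim_\K (P_1J_t)\geq \dim_\K P_{t+1}-h_t^{\langle t\rangle}$, with equality for the lex ideal. Together with $h_{t+1}=h_t^{\langle t\rangle}$ this gives $J_{t+1}=P_1J_t$.
\item Since $J$ is generated in degrees $\leq t+1$, the previous point shows that $J$ is generated in degrees $\leq t$. So I may replace the hypothesis by: $J$ is generated in degrees $\leq t$ and has maximal growth from $t$ to $t+1$.
\end{itemize}

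\emph{Persistence (item (i)).} I will argue by induction on the number of variables $n+1$, after extending $\K$ to an infinite field if necessary.
\begin{enumerate}
\item Pick a generic linear form $\ell$. Green's hyperplane restriction theorem gives $H_{P/(J+\ell)}(d)\leq (h_d)_{\langle d\rangle}$.
\item Use the exact sequence $0\to (P/(J:\ell))(-1)\to P/J\to P/(J+\ell)\to 0$. Combined with the numerical identity $a^{\langle d\rangle}=a+ (a_{\langle d\rangle})^{\langle d\rangle}$ and maximal growth at degree $t$, it forces two things: $J_{t+1}:\ell=J_t$, and maximal growth of $\bar J=(J+\ell)/(\ell)$ in the polynomial ring in $n$ variables.
\item The inductive hypothesis gives persistence for $\bar J$. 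Since $\bar J$ is still generated in degree $\leq t$, the same inequalities propagate from degree $t+1$ to $t+2$, and then onward to all $i\geq t$.
\end{enumerate}
The main obstacle is the equality case of Green's inequality combined with the Macaulay numerics. I would first try to follow Green's original proof (Theorem~3.8 of \cite{Green}), or \cite[Theorem~4.3.3]{BH}, rather than re-derive it.

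\emph{Regularity (item (ii)).} By persistence, $P/J$ and the lex ideal $L$ with the same Hilbert function satisfy $h_{i+1}=h_i^{\langle i\rangle}$ for all $i\geq t$. Hence $L$ has no minimal generators in degrees $>t$, so $L$ is generated in degrees $\leq t$.
\begin{enumerate}
\item $L$ is stable, so the Eliahou--Kervaire resolution shows $\reg{L}$ equals the maximal degree of its generators, which is $\leq t$.
\item By the Bigatti--Hulett--Pardue theorem, the graded Betti numbers of $J$ are bounded by those of $L$, so $\reg{J}\leq\reg{L}\leq t$.
\end{enumerate}
Alternatively, one can pass to the generic initial ideal in reverse lexicographic order and argue as in \cite[Corollary~4.2.14]{BH}.
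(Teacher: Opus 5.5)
The paper gives no proof of this statement; it quotes it from Gotzmann, Green, Bruns--Herzog and Mourrain--Tr\'ebuchet, so your reconstruction has to stand on its own. Several parts of it are correct: the preliminary reduction ($J_{t+1}=P_1J_t$, hence $J$ is generated in degrees $\leq t$), your step~2 (the equality chain forcing $(J:\ell)_t=J_t$ and maximal growth of $\bar J$ at $t$), and your deduction of (ii) from (i) via the lex ideal, Eliahou--Kervaire and Bigatti--Hulett--Pardue.

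The gap is step~3 of the persistence argument. Set $\bar P=P/(\ell)$ and $\bar h_i=H_{\bar P/\bar J}(i)$. The exact sequence gives $h_{t+2}=H_{P/(J:\ell)}(t+1)+\bar h_{t+2}$. Persistence for $\bar J$ together with $\bar h_{t+1}=h_{t+1}-h_t=(h_{t+1})_{\langle t+1\rangle}$ yields $\bar h_{t+2}=h_{t+1}^{\langle t+1\rangle}-h_{t+1}$, hence
\[
h_{t+2}=h_{t+1}^{\langle t+1\rangle}-\dim_\K\bigl((J:\ell)/J\bigr)_{t+1}.
\]
So persistence at $t+1$ is \emph{equivalent} to $\ell$ being a non-zero-divisor on $P/J$ in degree $t+1$, and no inequality supplies this. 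At degree $t$ you extracted injectivity from the hypothesis $h_{t+1}=h_t^{\langle t\rangle}$; at degree $t+1$ the corresponding equality is exactly what you want to prove. ``The same inequalities propagate'' is therefore circular. The real obstacle is not the equality case at degree $t$, which you already handled, but injectivity of $\ell$ in all higher degrees.

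The missing idea is an algebraic lifting argument that uses the \emph{regularity} of $\bar J$, so (i) and (ii) must be carried together through the induction on the number of variables.

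By the inductive hypothesis $\bar J$ satisfies persistence, so your own part-(ii) argument applies to it and gives $\reg{\bar J}\leq t$. Thus $\mathrm{Tor}_1^{\bar P}(\bar J,\K)_j=0$ for $j\geq t+2$, i.e.\ the first Koszul homology of $\bar x_0,\dots,\bar x_{n-1}$ on $\bar J$ vanishes in those degrees.

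After a change of coordinates take $\ell=x_n$, and assume $(J:x_n)_k=J_k$ for some $k\geq t$.
\begin{enumerate}
\item Suppose $x_nf\in J_{k+2}$ with $f\in P_{k+1}$. Write $x_nf=\sum_{i=0}^n x_iu_i$ with $u_i\in J_{k+1}$; this is possible since $J$ is generated in degrees $\leq t$.
\item Reducing modulo $x_n$ gives $\sum_{i<n}\bar x_i\bar u_i=0$. By the Koszul vanishing, $\bar u_i=\sum_{j<n}\bar x_j\bar v_{ij}$ with $\bar v_{ij}=-\bar v_{ji}\in\bar J_k$.
\item Lift these to antisymmetric $v_{ij}\in J_k$. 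Then $u_i-\sum_{j<n}x_jv_{ij}=x_nw_i$ with $w_i\in(J:x_n)_k=J_k$.
\item Antisymmetry gives $x_n(f-u_n)=\sum_{i<n}x_iu_i=x_n\sum_{i<n}x_iw_i$, so $f\in J_{k+1}$.
\end{enumerate}
Hence $(J:\ell)_k=J_k$ for all $k\geq t$. Combining this with persistence for $\bar J$ and the identities $a^{\langle k\rangle}=a+(a_{\langle k\rangle})^{\langle k\rangle}$ and $(a^{\langle k\rangle})_{\langle k+1\rangle}=(a_{\langle k\rangle})^{\langle k\rangle}$, induction on $k$ gives $h_{k+1}=h_k^{\langle k\rangle}$ for every $k\geq t$. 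Your part-(ii) argument then closes the induction on the number of variables.
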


We can now give an effective criterion to check whether a border prebasis is a basis.

We remind that $H_{P/(\dO{1})}(d)=\vert \Oid_d\vert$, $\forall d\geq 0$. 
\begin{theorem}\label{effective char comm mat}
	Let $\Oid$ be an order ideal, 
    and $G$ be a homogeneous $\dO{1}$-prebasis. Set $h_i=H_{P/(\dO{1})}(i)$ for $i\geq 0$.
    Let $t$ be such that $h_{t+1}=h_t^{\langle t \rangle}$ and
    assume that $(\dO{1})$ and $(G)$ are both generated in degrees $\leq t+1$.
    The following statements are equivalent:
	\begin{enumerate}[label=(\roman*)]
		\item\label{it:effective char comm mat1} $G$ is a homogeneous $\dO{1}$-basis;
        \item \label{it:effective char comm mat3} 
        for every $0\leq d\leq 
        t-1$, the graded formal multiplication matrices of $G$ satisfy the equality
		\[
		\Chi_r^{(d+1)} \Chi_s^{(d)} = \Chi_s^{(d+1)} \Chi_r^{(d)} \; \text{ for all } r,s \in \{0,\dots,n\}.
		\]
		\end{enumerate}
\end{theorem}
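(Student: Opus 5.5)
The implication (i)$\Rightarrow$(ii) is immediate from Theorem~\ref{char comm mat}, since the commutation holds for every $d\geq 0$. For the converse, I would first apply Lemma~\ref{lemma char comm mat} with $d_0=t+1$. Its item~(ii) requires commutation for $0\leq d\leq d_0-2=t-1$, which is exactly our hypothesis. The lemma then gives that, for every $0\leq d\leq t+1$, the residue classes of $\Oid_d$ form a $\K$-basis of $P_d/(G)_d$. In particular
\[
H_{P/(G)}(d)=|\Oid_d|=h_d \quad\text{for } 0\leq d\leq t+1 .
\]
The remaining task is to extend this equality to all $d\geq t+2$, and then to conclude through Definition~\ref{def: border basis} and Corollary~\ref{cor prebasis generates}.

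To propagate the Hilbert function I will use Gotzmann's Persistence Theorem, Theorem~\ref{thms gotz}\ref{thm Gotz pers}, applied twice.
\begin{itemize}
\item First, for the monomial ideal $(\dO{1})$: it is generated in degrees $\leq t+1$ and satisfies $h_{t+1}=h_t^{\langle t\rangle}$. Hence $h_{i+1}=h_i^{\langle i\rangle}$ for all $i\geq t$, so the sequence $h_i$ is determined by $h_t$ through iterated Macaulay transforms.
\item Second, for the ideal $(G)$: set $h'_i=H_{P/(G)}(i)$. We already know $h'_t=h_t$ and $h'_{t+1}=h_{t+1}=h_t^{\langle t\rangle}=h_t'^{\langle t\rangle}$, and $(G)$ is generated in degrees $\leq t+1$ by hypothesis. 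Gotzmann persistence then gives $h'_{i+1}=h_i'^{\langle i\rangle}$ for all $i\geq t$.
\end{itemize}
By induction on $i$, $h'_i=h_i$ for every $i\geq t$, hence $\dim_\K P_d/(G)_d=|\Oid_d|$ for all $d\geq 0$.

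Finally, by Corollary~\ref{cor prebasis generates} the residue classes of $\Oid_d$ generate $P_d/(G)_d$. Since their number equals the dimension, they form a basis for every $d$, so $G$ is a homogeneous $\dO{1}$-basis of $(G)$ by Definition~\ref{def: border basis}.

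The main obstacle is checking that the hypotheses of Theorem~\ref{thms gotz} are correctly met for $(G)$. The theorem is stated with the persistence condition at degree $t$ and generation in degrees $\leq t+1$, and I must make sure both transfer from $(\dO{1})$ to $(G)$. This hinges on Lemma~\ref{lemma char comm mat} reaching degree $d_0=t+1$, not merely $t$: if it only gave equality of Hilbert functions up to degree $t$, the equation $h'_{t+1}=h_t'^{\langle t\rangle}$ would be unavailable. I therefore need to verify the index shift carefully, namely that commutation up to $d=t-1$ yields the basis property up to degree $t+1$. Other details to check are the degenerate cases $t<1$ (forcing $d_0\geq2$ in the lemma) and $h_t=0$, where the Macaulay representation is trivial.
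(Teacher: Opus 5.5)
Your proposal is correct and follows the paper's proof. You apply Lemma~\ref{lemma char comm mat} with $d_0=t+1$ to get $H_{P/(G)}(d)=|\Oid_d|$ for $d\le t+1$, propagate this equality to all degrees by Gotzmann persistence, and conclude with Corollary~\ref{cor prebasis generates}. The paper additionally invokes the Regularity Theorem to note that both ideals are generated in degrees $\le t$; your direct double application of persistence, as stated in Theorem~\ref{thms gotz}, does not need that step.
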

\begin{proof}
    \begin{enumerate}[label={}, align=left, labelsep=0.5em, leftmargin=*]
        \item[\ref{it:effective char comm mat1}$\Rightarrow$\ref{it:effective char comm mat3}:] Since $G$ is a homogeneous $\dO{1}$-basis, Theorem~\ref{char comm mat}.\ref{it_ii_char comm mat} holds for every $d\geq0$. 

		\item[\ref{it:effective char comm mat3}$\Rightarrow$\ref{it:effective char comm mat1}:] Thanks to the hypothesis, for every $0\leq d\leq t+1$, the residue classes of the elements in $\Oid_d$ are a $\K$-vector space basis of $P_d/(G)_d$ by Lemma \ref{lemma char comm mat}. \\
        As a consequence, we have that $H_{P/(G)}(d)=H_{P/(\dO{1})}(d)=\vert \Oid_d\vert$ for every $0\leq d\leq t+1$. In particular, by Theorem~\ref{thms gotz}.\ref{thm Gotz reg} $\reg{(\dO{1})}$ and $\reg{(G)}$ are both $\leq t$; furthermore, these ideals are both generated in degrees~$\leq t$. 
        By Theorem~\ref{thms gotz}.\ref{thm Gotz pers},
        we have that  $H_{P/(G)}(d)=\vert \Oid_d\vert$ for every~$d\geq t$. \\        
        Since the residue classes of $\Oid_d$ are a system of generators of $P_d/(G)_d$ for every $d\geq 0$, being $G$ a homogeneous $\dO{1}$-prebasis (Corollary~\ref{cor prebasis generates}), we obtain that the residue classes of $\Oid_d$ are a basis of $P_d/(G)_d$ for every $d\geq 0$.  
        \end{enumerate}
\end{proof}

Exploiting Corollary~\ref{cor: comm mat} and Lemma~\ref{lemma char comm mat}, we can refine Theorem~\ref{effective char comm mat} in the following way.
\begin{corollary}
    Let $\Oid$ be an order ideal,  and $G$ be a homogeneous $\dO{1}$-prebasis. Set $h_i=H_{P/(\dO{1})}(i)$ for $i\geq 0$.
    Let $t$ be such that $h_{t+1}=h_t^{\langle t \rangle}$ and
    assume that $(\dO{1})$ and $(G)$ are both generated in degrees $\leq t+1$.
    The following statements are equivalent:
    \begin{enumerate}[label=(\roman*)]
		\item\label{it:effective char border red1} $G$ is a homogeneous $\dO{1}$-basis;
        \item  
        for every $\mindeg{\dO{1}}-1\leq d\leq t-1$, the graded formal multiplication matrices of $G$ satisfy the equality
		\[
		\Chi_r^{(d+1)} \Chi_s^{(d)} = \Chi_s^{(d+1)} \Chi_r^{(d)} \; \text{ for all } 0\leq r<s\leq n;
		\]
		  \item 
        for every $\mindeg{\dO{1}}+1\leq d\leq t+1$, the residue classes of $\Oid_d$ are a $\K$-vector space basis of $P_d/(G)_d$.
    \end{enumerate}
\end{corollary}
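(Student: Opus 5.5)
We prove the cycle (i)$\Rightarrow$(iii)$\Rightarrow$(ii)$\Rightarrow$(i), using Theorem~\ref{effective char comm mat} as the main engine, Corollary~\ref{cor: comm mat} to discard low degrees, and Lemma~\ref{lemma char comm mat} to pass between matrix commutation and the basis property in a bounded range of degrees.

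\textbf{Step 1: (i)$\Rightarrow$(iii).} This is immediate from Definition~\ref{def: border basis}, since a homogeneous $\dO{1}$-basis gives a basis of $P_d/(G)_d$ in every degree.

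\textbf{Step 2: (iii)$\Rightarrow$(ii).} We first recover the basis property in all degrees $0\le d\le t+1$, not only for $d\ge \mindeg{\dO{1}}+1$. For $d<\mindeg{\dO{1}}$ we have $(G)_d=0$ and $\Oid_d=\T_d$, so the claim is trivial. For $d=\mindeg{\dO{1}}$, the polynomials of $G_d$ have distinct head terms $\sigma\in\dO{1}_d$ with tails in $\langle\Oid_d\rangle$. Hence $(G)_d=\langle G_d\rangle$ meets $\langle\Oid_d\rangle$ only in $0$, because a nonzero combination of the $g_\sigma$ has some $\sigma$ in its support. This gives Definition~\ref{def: border basis}.\ref{it:def border basis3} in that degree.

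Now apply Lemma~\ref{lemma char comm mat} with $d_0=t+1$. Its item \ref{it_i_char comm mat} now holds, so the commutation relations hold for $0\le d\le t-1$, and in particular on the range required in (ii). We need $t+1\ge2$; for small $t$ the relevant range in (ii) is empty or trivial anyway.

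\textbf{Step 3: (ii)$\Rightarrow$(i).} The proof of Corollary~\ref{cor: comm mat} shows, by a degree-by-degree check, that the relations hold automatically for every $d<\mindeg{\dO{1}}-1$. The check treats each basis vector $\mathcal{E}_\ell^{(d)}$ separately and does not depend on the range of $d$. The relation for $(s,r)$ is the same as the one for $(r,s)$, and the case $r=s$ is trivial. So (ii) yields the relations for all $r,s$ and all $0\le d\le t-1$. This is item \ref{it:effective char comm mat3} of Theorem~\ref{effective char comm mat}, which gives (i).

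The only delicate point is the degree $d=\mindeg{\dO{1}}$ in Step 2, together with the indexing shift between "$d\le t-1$" for matrices and "$d\le t+1$" for quotients. The degree-$\mindeg{\dO{1}}$ case is exactly why (iii) may start at $\mindeg{\dO{1}}+1$: that degree is always a basis by the head-term argument, and lower degrees are untouched by $G$.
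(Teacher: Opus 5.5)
Your proof is correct and follows the route the paper indicates (the paper gives no written proof, only the remark that the corollary comes from combining Theorem~\ref{effective char comm mat}, Corollary~\ref{cor: comm mat} and Lemma~\ref{lemma char comm mat}): Corollary~\ref{cor: comm mat} disposes of the degrees $d<\mindeg{\dO{1}}-1$ and of the pairs with $r\geq s$, and Lemma~\ref{lemma char comm mat} with $d_0=t+1$ links (ii) and (iii). Your one variation is the direct head-term argument showing that the basis property is automatic for $d\le\mindeg{\dO{1}}$. It is a clean substitute for deriving those degrees from the automatic low-degree commutation via the lemma, and it also avoids the lemma's requirement $d_0\ge 2$ when $\mindeg{\dO{1}}=1$.
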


We illustrate the application of Theorem~\ref{effective char comm mat} with a final example.

\begin{example}\label{ex finale}
    In the polynomial ring $K[x,y,z]$, consider the order ideal $\Oid=\T(x)\cup \T(y,z)$. The border of $\Oid$ is
    \[
   \dO{1}_0=\dO{1}_1=\emptyset, \quad \dO{1}_\ell=\{x^{\ell-1}y, x^{\ell-1}z\}\cup x\cdot\T(y,z)_{\ell-1}\; \forall \ell\geq 2.
    \]
    The following diagram illustrates the situation. The point $(r,s,t)$ of the diagram corresponds to the term $x^ry^sz^t$.
    \begin{center}
		\tdplotsetmaincoords{65}{125} 
		\begin{tikzpicture}[tdplot_main_coords, scale=1.1]
			\foreach \x in {0,...,4} {
				\draw[gray!30] (\x,0,0) -- (\x,4.5,0); 
			}
			\foreach \y in {0,...,4} {
				\draw[gray!30] (0,\y,0) -- (4.5,\y,0); 
			}
			\foreach \y in {0,...,4} {
				\draw[gray!30] (0,\y,0) -- (0,\y,4.5); 
			}
			\foreach \z in {0,...,4} {
				\draw[gray!30] (0,0,\z) -- (0,4.5,\z); 
			}
			\foreach \x in {0,...,4} {
				\draw[gray!30] (\x,0,0) -- (\x,0,4.5); 
			}
			\foreach \z in {0,...,4} {
				\draw[gray!30] (0,0,\z) -- (4.5,0,\z); 
			}
			\foreach \y in {0,...,4} {
				\draw[gray!30] (1,\y,0) -- (1,\y,4.5); 
			}
			\foreach \z in {0,...,4} {
				\draw[gray!30] (1,0,\z) -- (1,4.5,\z); 
			}
			\foreach \y in {0,...,4} \foreach \z in {0,...,4} {
				\draw[gray!30] (0,\y,\z) -- (1,\y,\z);
			}

			\draw[->] (0,0,0) -- (4.5,0,0) node[anchor=north east]{\textit{x}};
			\draw[->] (0,0,0) -- (0,4.5,0) node[anchor=north west]{\textit{y}};
			\draw[->] (0,0,0) -- (0,0,4.5) node[anchor=south]{\textit{z}};
			\foreach \x/\y/\z in {
				0/0/0, 
				1/0/0, 0/0/1, 0/1/0, 
				2/0/0, 0/0/2, 0/2/0, 0/1/1, 
				3/0/0, 0/0/3, 0/3/0, 0/2/1, 0/1/2, 
				4/0/0, 0/0/4, 0/4/0, 0/3/1, 0/2/2, 0/1/3, 
				0/4/1, 0/3/2, 0/2/3, 0/1/4, 
				0/4/2, 0/3/3, 0/2/4,
				0/4/3, 0/3/4, 0/4/4} {
				\node[fill,circle,inner sep=2pt] at (\x,\y,\z) {};
			}
			\foreach \x/\y/\z in {
				1/1/0, 1/0/1, 
				2/1/0, 2/0/1, 1/2/0, 1/0/2, 1/1/1, 
				3/1/0, 3/0/1, 1/3/0, 1/0/3, 1/1/2, 1/2/1, 
				4/1/0, 4/0/1, 1/4/0, 1/0/4, 1/1/3, 1/2/2, 1/3/1,  
				1/1/4, 1/2/3, 1/3/2, 1/4/1,
				1/2/4, 1/3/3, 1/4/2,
				1/3/4, 1/4/3, 1/4/4} {
				\node[draw,circle,inner sep=2pt] at (\x,\y,\z) {};
			}
		\end{tikzpicture}
	\end{center}    
    We have $(\dO{1})=(xy,xz)$. Let $G$ be a $\dO{1}$-prebasis. We consider the coefficients of the polynomials in $G$ as parameters, and we impose that $G$ is a $\dO{1}$-basis by imposing that the coefficients satisfy some polynomial equations.  By Theorems~\ref{thms gotz} and~\ref{char comm mat}, we only need to construct the formal multiplication matrices for $d=1,2$ (using also Corollary~\ref{cor: comm mat}). These multiplication matrices involve the coefficients of the polynomials in $G_2$ and $G_3$, i.e. 
    \[
    G_2=\{xy+c_{2,1,1}x^{2} +c_{2,1,2}y^{2} +c_{2,1,3}y z +c_{2,1,4}z^{2} ,\, xz+c_{2,2,1}x^{2} +c_{2,2,2}y^{2} +c_{2,2,3}y z +c_{2,2,4}z^{2}\}  
    \]
    {\allowdisplaybreaks\begin{equation*}
    \begin{split}
        G_3=\{&x^2 y+c_{3,1,1}x^{3} +c_{3,1,2}y^{3} +c_{3,1,3}y^2 z +c_{3,1,4}y z^2 +c_{3,1,5}z^{3},\\
        &x^{2} z + c_{3,2,1}x^{3} +c_{3,2,2}y^{3} +c_{3,2,3}y^{2} z +c_{3,2,4}y z^{2} +c_{3,2,5}z^{3} ,\\
        &xy^{2} +c_{3,3,1} x^{3} +c_{3,3,2}y^{3} +c_{3,3,3}y^{2} z +c_{3,3,4}y z^{2} +c_{3,3,5}z^{3} , \\
        & x z^{2}+c_{3,4,1} x^{3} +c_{3,4,2}y^{3} +c_{3,4,3}y^{2} z +c_{3,4,4}y z^{2} +c_{3,4,5}z^{3} ,\\
        &xy z+ c_{3,5,1}x^{3} +c_{3,5,2}y^{3} +c_{3,5,3}y^{2} z +c_{3,5,4}y z^{2} +c_{3,5,5} z^{3} \}
    \end{split}
    \end{equation*}}
    where $c_{i,j,k}\in \K$. We denote by $\Chi^{(d)}$, $\mathcal{Y}^{(d)}$, $\mathcal{Z}^{(d)}$ the formal multiplication matrices w.r.t. the variables $x$, $y$, $z$. For $d=1,2$, we have: 
    \[
    \Chi^{(1)}=\left[\begin{array}{ccc}
    1 & -c_{2,1,1} & -c_{2,2,1} 
    \\
    0 & -c_{2,1,2} & -c_{2,2,2} 
    \\
    0 & -c_{2,1,3} & -c_{2,2,3} 
    \\
    0 & -c_{2,1,4} & -c_{2,2,4} 
    \end{array}\right],\; 
    \Chi^{(2)}=\left[\begin{array}{cccc}
    1 & -c_{3,3,1} & -c_{3,5,1} & -c_{3,4,1} 
    \\
    0 & -c_{3,3,2} & -c_{3,5,2} & -c_{3,4,2} 
    \\
    0 & -c_{3,3,3} & -c_{3,5,3} & -c_{3,4,3} 
    \\
    0 & -c_{3,3,4} & -c_{3,5,4} & -c_{3,4,4} 
    \\
    0 & -c_{3,3,5} & -c_{3,5,5} & -c_{3,4,5} 
    \end{array}\right], \]
    \[    
    \mathcal{Y}^{(1)}=\left[\begin{array}{ccc}
    -c_{2,1,1} & 0 & 0 
    \\
    -c_{2,1,2} & 1 & 0 
    \\
    -c_{2,1,3} & 0 & 1 
    \\
    -c_{2,1,4} & 0 & 0 
    \end{array}\right], \; 
    \mathcal Y^{(2)}=
    \left[\begin{array}{cccc}
    -c_{3,1,1} & 0 & 0 & 0 
    \\
    -c_{3,1,2} & 1 & 0 & 0 
    \\
    -c_{3,1,3} & 0 & 1 & 0 
    \\
    -c_{3,1,4} & 0 & 0 & 1 
    \\
    -c_{3,1,5} & 0 & 0 & 0 
    \end{array}\right],\]
    \[
    \mathcal Z^{(1)}=\left[\begin{array}{ccc}
    -c_{2,2,1} & 0 & 0 
    \\
    -c_{2,2,2} & 0 & 0 
    \\
    -c_{2,2,3} & 1 & 0 
    \\
    -c_{2,2,4} & 0 & 1 
    \end{array}\right],
    \; \mathcal Z^{(2)}=\left[\begin{array}{cccc}
    -c_{3,2,1} & 0 & 0 & 0 
    \\
     -c_{3,2,2} & 0 & 0 & 0 
    \\
    -c_{3,2,3} & 1 & 0 & 0 
    \\
    -c_{3,2,4} & 0 & 1 & 0 
    \\
    -c_{3,2,5} & 0 & 0 & 1 
    \end{array}\right],
    \]
    where the matrices are written  considering the terms of $\Oid_d$
    , for $d=1,2,3 $, ordered in the following way:
    \begin{equation*}
    \begin{gathered}
    \Oid_0=\{ \tau_1=1\}, \quad
    \Oid_1=\{ \tau_2=x,\tau_3=y,\tau_4=z\}, \quad 
    \Oid_2=\{\tau_5=x^2, \tau_6=y^2, \tau_7=yz, \tau_8=z^2\},\\ 
    \Oid_3=\{\tau_9=x^{3},\tau_{10}=y^{3}, \tau_{11}=y^{2} z,\tau_{12}=y z^{2},\tau_{13}= z^{3}\}.
    \end{gathered}
    \end{equation*}
    By imposing the commutativity conditions between the products of formal multiplication matrices of Theorem~\ref{effective char comm mat}, and imposing that $(G)$ is generated in degree $\leq 3$, we obtain that the prebasis $G$ is a $\dO{1}$-basis  only if the coefficients of $G_2$ and $G_3$ satisfy the following equations:
    {\allowdisplaybreaks\begin{equation}\label{eq:conds final example}
    \begin{gathered}
    c_{2,1,1} c_{3,1,1}+c_{3,3,1}=0,\quad c_{2,1,1} c_{3,1,5}+c_{3,3,5}=0, \quad
    c_{2,1,1} c_{3,2,1}+c_{3,5,1}=0, \\ 
    c_{2,1,1} c_{3,2,2}+c_{3,5,2}=0, \quad c_{2,2,1} c_{3,1,1}+c_{3,5,1}=0, \quad c_{2,2,1} c_{3,1,5}+c_{3,5,5}=0, \\
    c_{2,2,1} c_{3,2,1}+c_{3,4,1}=0, \quad c_{2,2,1} c_{3,2,2}+c_{3,4,2}=0, \quad -c_{2,1,1} c_{3,2,1}+c_{2,2,1} c_{3,1,1}=0, \\
    c_{2,1,1} c_{3,1,2}-c_{2,1,2}+c_{3,3,2}=0, \quad c_{2,1,1} c_{3,2,3}-c_{2,1,2}+c_{3,5,3}=0, \quad c_{2,1,1} c_{3,1,3}-c_{2,1,3}+c_{3,3,3}=0, \\
    c_{2,1,1} c_{3,2,4}-c_{2,1,3}+c_{3,5,4}=0, \quad c_{2,1,1} c_{3,1,4}-c_{2,1,4}+c_{3,3,4}=0, \quad c_{2,1,1} c_{3,2,5}-c_{2,1,4}+c_{3,5,5}=0, \\
    c_{2,2,1} c_{3,1,2}-c_{2,2,2}+c_{3,5,2}=0,  \quad c_{2,2,1} c_{3,2,3}-c_{2,2,2}+c_{3,4,3}=0, \quad c_{2,2,1} c_{3,1,3}-c_{2,2,3}+c_{3,5,3}=0, \\
    c_{2,2,1} c_{3,2,4}-c_{2,2,3}+c_{3,4,4}=0, \quad c_{2,2,1} c_{3,1,4}-c_{2,2,4}+c_{3,5,4}=0, \quad c_{2,2,1} c_{3,2,5}-c_{2,2,4}+c_{3,4,5}=0, \\
    -c_{2,1,1} c_{3,2,5}+c_{2,2,1} c_{3,1,5}+c_{2,1,4}=0, \quad 
    -c_{2,1,1} c_{3,2,2}+c_{2,2,1} c_{3,1,2}-c_{2,2,2}=0, \\
    -c_{2,1,1} c_{3,2,3}+c_{2,2,1} c_{3,1,3}+c_{2,1,2}-c_{2,2,3}=0, \quad
    -c_{2,1,1} c_{3,2,4}+c_{2,2,1} c_{3,1,4}+c_{2,1,3}-c_{2,2,4}=0, \\
    -c_{2,1,2} c_{3,3,2}-c_{2,1,3} c_{3,5,2}-c_{2,1,4} c_{3,4,2}-c_{3,1,2}=0,\\   
    -c_{2,1,2} c_{3,3,3}-c_{2,1,3} c_{3,5,3}-c_{2,1,4} c_{3,4,3}-c_{3,1,3}=0,\\ 
    -c_{2,1,2} c_{3,3,4}-c_{2,1,3} c_{3,5,4}-c_{2,1,4} c_{3,4,4}-c_{3,1,4}=0,\\ 
    -c_{2,1,2} c_{3,3,5}-c_{2,1,3} c_{3,5,5}-c_{2,1,4} c_{3,4,5}-c_{3,1,5}=0,\\ 
    -c_{2,2,2} c_{3,3,2}-c_{2,2,3} c_{3,5,2}-c_{2,2,4} c_{3,4,2}-c_{3,2,2}=0,\\ 
    -c_{2,2,2} c_{3,3,3}-c_{2,2,3} c_{3,5,3}-c_{2,2,4} c_{3,4,3}-c_{3,2,3}=0,\\ 
    -c_{2,2,2} c_{3,3,4}-c_{2,2,3} c_{3,5,4}-c_{2,2,4} c_{3,4,4}-c_{3,2,4}=0,\\ 
    -c_{2,2,2} c_{3,3,5}-c_{2,2,3} c_{3,5,5}-c_{2,2,4} c_{3,4,5}-c_{3,2,5}=0,\\ 
    -c_{2,1,2} c_{3,3,1}-c_{2,1,3} c_{3,5,1}-c_{2,1,4} c_{3,4,1}+c_{2,1,1}-c_{3,1,1}=0,\\ 
    -c_{2,2,2} c_{3,3,1}-c_{2,2,3} c_{3,5,1}-c_{2,2,4} c_{3,4,1}+c_{2,2,1}-c_{3,2,1}=0.
    \end{gathered}
    \end{equation}}

    The family of ideals having a $\dO{1}$-basis  is in this case parametrized by the equations given in \eqref{eq:conds final example}. Indeed, we observe that in general once we consider values of the coefficients of $G_{\mindeg{\dO{1}}},\cdots,G_{t+1}$ that satisfy the equations of Theorem~\ref{effective char comm mat},\ref{it:effective char comm mat3} (in the present case equations~\eqref{eq:conds final example}), 
    we have that the ideal $J$ generated by $\bigcup_{i=\mindeg{\dO{1}}}^{t+1} G_i$ has border basis $G$, by imposing that  $G_d\subseteq J$, for $d\geq t+1$. This border basis of $J$ is unique by Proposition~\ref{prop: eu basis}. 
     We can explicitly compute the coefficients of the polynomials in $G_d$, for instance, using an iterative approach by solving the linear systems arising from the commutativity of formal multiplication matrices in degrees from $t+1$ to~$d$.
     
    For instance, for the following values of coefficients of the polynomials in $G_2$
     \begin{equation}\label{eq:fin ex c2}
     c_{2,1,1} = c_{2,1,4}=c_{2,2,1}=c_{2,2,2}=0,\; c_{2,1,2} = c_{2,2,3},\; c_{2,1,3} = c_{2,2,4},
     \end{equation}
      we obtain the values of the coefficients of $G_3$ by solving the linear system of the equations in~\eqref{eq:conds final example} after substituting~\eqref{eq:fin ex c2}:
     \begin{equation*}
     \begin{gathered}
     c_{3,1,1} = 0, \quad c_{3,1,2} = -c_{2,2,3}^{2},\quad c_{3,1,3} = -2 c_{2,2,3}c_{2,2,4}, \quad 
     c_{3,1,4} = -c_{2,2,4}^{2}, \quad c_{3,1,5} = 0,\\ 
     c_{3,2,1} = 0,\quad c_{3,2,2} = 0,\quad  c_{3,2,3} = -c_{2,2,3}^{2}, \quad c_{3,2,4} = -2 c_{2,2,3} c_{2,2,4}, \quad  c_{3,2,5} = -c_{2,2,4}^{2},\\
     c_{3,3,1} = 0, \quad c_{3,3,2} = c_{2,2,3}, \quad
     c_{3,3,3} = c_{2,2,4},\quad  c_{3,3,4} = 0, \quad c_{3,3,5} = 0,\\
     c_{3,4,1} = 0,\quad  c_{3,4,2} = 0,\quad c_{3,4,3} = 0,\quad
     c_{3,4,4} = c_{2,2,3},\quad c_{3,4,5} = c_{2,2,4},\\ c_{3,5,1} = 0,\quad
     c_{3,5,2} = 0, \quad c_{3,5,3} = c_{2,2,3},\quad c_{3,5,4} = c_{2,2,4},\quad c_{3,5,5} = 0.
    \end{gathered}
    \end{equation*}

\end{example}

\section{Future developments}

Despite their well-known strengths, border bases on a finite order ideal are available only for ideals of Krull dimension 0 in a non-homogeneous framework. In this paper we gave a notion of homogeneous border basis on an infinite order ideal, and adapted and generalized to this new framework a couple of characterizations (Theorems~\ref{char_rewrite_relations} and~\ref{char comm mat}) among those you can find in the literature for classical border bases.

A natural direction for further research is to consider also some other characterizations of classical border bases (e.g. reduction of $S$-polynomials, lifting of syzygies) and extend them to homogenous border bases. Moreover, it may be worthwhile to investigate numerical stability, and the eventual application to the study of Hilbert schemes, as it is done in the 0-dimensional case. Indeed, we believe that the characterizations already proved for homogeneous border bases will allow to parametrize families of ideals with a border basis on the same order ideal, and we expect that, up to some suitable hypotheses, these families embed as open subset in a Hilbert scheme, giving a new tool to explore the features of this mysterious geometric object.

\section*{Acknowledgments}
The first author is member of the INdAM group
GNSAGA.

\bibliographystyle{amsplain}
\bibliography{bibBorder}

\end{document}